\theoremstyle{plain}
\newtheorem{theorem}{Theorem}[section]
\newtheorem{corollary}[theorem]{Corollary}
\newtheorem{proposition}[theorem]{Proposition}
\newtheorem{lemma}[theorem]{Lemma}
\newtheorem{definition}[theorem]{Definition} 
\theoremstyle{definition}
\newtheorem{example}[theorem]{Example}
\newtheorem{remark}[theorem]{Remark}
\newtheorem{assumption}[theorem]{Assumption}
\newcommand{\Bl}{\mathrm{Bl}}
\newcommand{\Aut}{\mathrm{Aut}}
\newcommand{\Tr}{\mathrm{Tr}}
\newcommand{\Hess}{\mathrm{Hess}}
\newcommand{\Ric}{\mathrm{Ric}}
\renewcommand{\d}{\partial}
\newcommand{\db}{\overline{\partial}}
\newcommand{\ext}{\mathrm{ext}}
\renewcommand{\O}{\mathrm{O}}
\newcommand{\A}{\mathcal{A}}
\newcommand{\X}{\mathcal{X}}
\newcommand{\DF}{\mathrm{DF}}
\newcommand{\E}{\mathcal{E}}
\newcommand{\Ch}{\mathrm{Ch}}
\begin{document}
	
\title{Stability of weighted extremal manifolds through blowups}
\author{Michael Hallam}
\address{Department of Mathematics, Aarhus University, Ny Munkegade 118, DK-8000 Aarhus C, Denmark.}
\email{hallam@math.au.dk}
\maketitle

\begin{abstract}
	In \cite{Hal23}, we showed that the blowup of a weighted extremal K{\"a}hler manifold at a relatively stable fixed point admits a weighted extremal metric. Using this result, we prove that a weighted extremal manifold is relatively weighted K-polystable. In particular, a weighted cscK manifold is weighted K-polystable. This strengthens both the weighted K-semistability proved by Lahdili \cite{Lah19} and Inoue \cite{Ino20}, and the weighted K-polystability with respect to smooth degenerations by Apostolov--Jubert--Lahdili \cite{AJL21}, allowing for possibly singular degenerations.
\end{abstract}

\tableofcontents

\section{Introduction}

The question of when a K{\"a}hler manifold admits a canonical K{\"a}hler metric is a cornerstone of K{\"a}hler geometry, and has yielded fascinating links with algebraic geometry. The problem was first posed by Calabi in the case of K{\"a}hler--Einstein metrics \cite{Cal54}, and later in greater generality for constant scalar curvature K{\"a}hler (cscK) metrics and extremal K{\"a}hler metrics \cite{Cal82}.

The existence of a cscK or extremal metric is conjecturally equivalent to various notions of K-stability, through the Yau--Tian--Donaldson conjecture \cite{Yau93, Tia97, Don02}. The earliest forms of K-stability were introduced in \cite{Tia97,Don02}; also relevant to us will be the condition of relative K-stability \cite{Sze07}, as well as the generalisations of K-stability to non-algebraic K{\"a}hler manifolds \cite{SD18, DR17, SD20, Der18}.

By now there are various approaches to proving K-stability of manifolds with canonical K{\"a}hler metrics, such as employing geodesic rays and asymptotic slope formulae of energy functionals; see \cite{PRS08, BHJ19, BDL20, SD18, SD20}. Although some of these methods will be important in this work, we instead focus on proofs which make use of existence results for canonical K{\"a}hler metrics on blowups. 

The first result of this kind was proven for cscK manifolds with discrete automorphisms \cite{AP06}, and was later generalised to extremal manifolds in \cite{APS11, Sze12, Sze15, DS21}. Using such existence results, one can ``upgrade" K-semistability to K-stability. This argument was first carried out by Stoppa in the polarised cscK case \cite{Sto09, Sto11}, and later by Stoppa--Sz{\'e}kelyhidi in the polarised extremal case \cite{SS11}; see also \cite{DR17, Der18} for the non-polarised K{\"a}hler setting.

Alongside cscK and extremal K{\"a}hler metrics, many other important notions of canonical K{\"a}hler metric have been introduced. Examples include K{\"a}hler--Ricci solitons, extremal Sasaki metrics \cite{BGS08}, as well as conformally K{\"a}hler Einstein--Maxwell metrics \cite{LeB10}. It was shown by Lahdili that these all fit under the umbrella of \emph{weighted extremal metrics} \cite{Lah19}. Independently, Inoue introduced an important subclass of Lahdili's metrics called \emph{\(\mu\)-cscK metrics}, which provide a convenient generalisation of both extremal metrics and K{\"a}hler--Ricci solitons \cite{Ino22}.

To give a brief description of weighted cscK and extremal metrics, let \((X,\omega)\) be a compact K{\"a}hler manifold, and let \(T\) be a compact real torus acting on \(X\) by hamiltonian isometries. Let \(\mu:X\to\mathfrak{t}^*\) be a moment map for the action, and write \(P:=\mu(X)\) for the moment polytope. The \emph{weight functions} that give rise to the weighted extremal equation are positive smooth functions \(v,w:P\to\mathbb{R}_{>0}\). Using these, one defines a \emph{weighted scalar curvature} \(S_{v,w}(\omega)\), which is a deformation of the usual scalar curvature \(S(\omega)\); we refer to Section \ref{subsec:weighted_extremal_metrics} for the precise definitions and examples. If \(S_{v,w}(\omega)\) is constant we say \(\omega\) is \emph{weighted cscK}, and if \(\nabla^{1,0}S_{v,w}(\omega)\) is a holomorphic vector field then we call \(\omega\) a \emph{weighted extremal metric}. All of the canonical K{\"a}hler metrics above can be recovered as weighted cscK or extremal metrics.

Alongside weighted cscK metrics, Lahdili introduced a notion of weighted K-stability with respect to smooth degenerations \cite{Lah19}. The definition involves integrating the weighted scalar curvature of a K{\"a}hler metric over the total space of a test configuration, which requires the test configuration be smooth. While this is assumption is sufficient to recover a good notion of weighted K-semistability, for weighted K-(poly)stability one must also define stability with respect to test configurations that have singular total space. Such a definition was achieved by Inoue \cite{Ino20}, who used the theory of equivariant cohomology to define the weighted Donaldson--Futaki invariant of a singular test configuration.

One slight restriction we make in our work, as was made in \cite{Ino20}, is that we assume the weight functions \(v\) and \(w\) can be written as the composition of a linear functional \(\ell_\xi:P\to\mathbb{R}\) with real analytic functions \(f_v\) and \(f_w\) on \(\ell_\xi(P)\subset\mathbb{R}\). In practice, most examples of weighted cscK and extremal metrics satisfy this condition. The assumption on the weights is laid out clearly in Assumption \ref{ass:weights}.

We now state the main results. Let \((X,\alpha_T)\) be a compact K{\"a}hler manifold equipped with a hamiltonian isometric \(T\)-action; here \(\alpha\) is a fixed K{\"a}hler class on \(X\) and \(\alpha_T\) a fixed extension of \(\alpha\) to a \(T\)-equivariant degree 2 cohomology class (such an extension is equivalent to  a choice of normalisation for the moment map of the \(T\)-action). In \cite{Hal23}, we showed that if \(X\) admits a weighted extremal metric, so too does its blowup at a relatively stable point that is fixed by both the \(T\)-action and the extremal field. Using this, we will show that a weighted extremal manifold is relatively weighted K-stable.
	
\begin{theorem}\label{thm:main}
	Let \((X,\alpha_T)\) be a \((v,w)\)-extremal manifold, where \(T\) is maximal in the hamiltonian isometry group of the weighted extremal metric, and the weight functions \(v,w\) satisfy Assumption \ref{ass:weights}. Then \((X,\alpha_T)\) is relatively weighted K-polystable.
\end{theorem}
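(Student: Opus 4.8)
The plan is to follow the blowup-destabilisation strategy pioneered by Stoppa \cite{Sto09} in the polarised cscK case and extended to the extremal setting by Stoppa--Sz\'ekelyhidi \cite{SS11}, now adapted to weighted extremal metrics by feeding in the existence result of \cite{Hal23}. I argue by contradiction. Since $(X,\alpha_T)$ is $(v,w)$-extremal, it is relatively weighted K-semistable by Lahdili \cite{Lah19} and Inoue \cite{Ino20}, so every $T$-equivariant test configuration has nonnegative relative weighted Donaldson--Futaki invariant, which I denote $\DF^{\red}_{v,w}(\cdot)$. Recall that relative weighted K-polystability requires $\DF^{\red}_{v,w}\geq 0$ with equality only for product configurations. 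Hence if $(X,\alpha_T)$ failed to be relatively weighted K-polystable, there would exist a \emph{non-product} $T$-equivariant test configuration $\X$ with $\DF^{\red}_{v,w}(\X)\leq 0$, and semistability would force $\DF^{\red}_{v,w}(\X)=0$.

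First I would fix a relatively stable $T$-fixed point $p\in X$, in the sense required by \cite{Hal23}. Because the extremal field $\nabla^{1,0}S_{v,w}(\omega)$ is a $T$-invariant holomorphic Killing field and $T$ is maximal, this field lies in $\mathfrak{t}$, so any $T$-fixed point is automatically fixed by the extremal field; thus $p$ meets both fixed-point hypotheses of \cite{Hal23}, and it remains only to check that a relatively stable such $p$ exists. Granting this, \cite{Hal23} shows that for small $\epsilon>0$ the blowup $\Bl_p X$, equipped with the induced weight data and the K\"ahler class $\pi^*\alpha-\epsilon^2[E]$, admits a weighted extremal metric. Applying the semistability result once more, now on $\Bl_p X$, we conclude that $\Bl_p X$ is itself relatively weighted K-semistable.

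The heart of the argument is to manufacture from $\X$ a $T$-equivariant test configuration $\X_{\Bl}$ on $\Bl_p X$ with $\DF^{\red}_{v,w}(\X_{\Bl})<0$ for small $\epsilon$, contradicting this semistability. Concretely, one flows $p$ to its limit $\bar p$ in the central fibre under the $\mathbb{C}^\ast$-action and blows up the total space of $\X$ along the closure of the resulting orbit, producing a test configuration for $\Bl_p X$. Using Inoue's equivariant-cohomological formula for the weighted Donaldson--Futaki invariant of a possibly singular test configuration \cite{Ino20}, together with the localisation made available by Assumption \ref{ass:weights}, I would expand $\DF^{\red}_{v,w}(\X_{\Bl})$ in powers of $\epsilon$. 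The leading term is $\DF^{\red}_{v,w}(\X)=0$, while the first correction has the form $-c\,\epsilon^{2n-2}\,\Ch_p(\X)+O(\epsilon^{2n-1})$ with $c>0$, where $\Ch_p(\X)$ is a weighted Chow-type weight of $p$ under the induced $\mathbb{C}^\ast$-action. Establishing that $\Ch_p(\X)>0$ for suitable $p$ whenever $\X$ is non-product then yields $\DF^{\red}_{v,w}(\X_{\Bl})<0$ for $\epsilon$ small, the desired contradiction.

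The main obstacle I expect is precisely this expansion of the relative weighted Donaldson--Futaki invariant under blowup, for two reasons. First, since $\X$ may be singular, one cannot integrate the weighted scalar curvature directly and must work through Inoue's equivariant definition; tracking how the weights $v,w$, expressed via Assumption \ref{ass:weights} as real-analytic functions of a linear moment-map component, restrict to the new moment polytope of $\Bl_p X$ and contribute at each order in $\epsilon$ is delicate. Second, the relative (extremal-modified) correction must be handled so that the blowup's extremal field and the choice of $\bar p$ remain compatible; this is exactly where maximality of $T$ and the fixing of $p$ by the extremal field enter, ensuring that projecting onto the space of holomorphy potentials does not destroy the sign of the leading correction. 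The key quantitative input, and the crux of the whole proof, is the strict positivity of $\Ch_p(\X)$ for non-product $\X$ in this weighted relative setting.
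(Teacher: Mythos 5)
Your overall strategy is the same as the paper's: use relative weighted K-semistability (Lahdili/Inoue), assume a non-product test configuration $(\X,\A_T)$ with vanishing relative invariant, blow up the orbit closure of a $T$-fixed point, expand the relative weighted Donaldson--Futaki invariant in $\epsilon$, and contradict the blowup existence theorem of \cite{Hal23} (your $\epsilon^2$/$\epsilon^{2n-2}$ conventions match the paper's $\epsilon$/$\epsilon^{n-1}$ after reparametrisation, and your constant $c$ is $v(\mu(p))/(n-2)!$). However, the proposal has a genuine gap: the step you yourself label ``the crux'' --- strict positivity of the Chow-type weight at a suitable $T$-invariant point whenever $\X$ is non-product --- is exactly the paper's Proposition \ref{prop:Chow}, and it is the hardest and most novel part of the argument; you give no indication of how to prove it. The paper's route is: (i) convert ``non-product'' into the quantitative statement $\|(\X,\A_T)^\perp\|_1^w>0$; (ii) prove a \emph{uniform} Chow instability --- under the normalisation $(g(\A_T))(\xi)=0$ there is a point $p$ with $\int_C\Omega\leq-\tfrac{1}{3V}\|(\X,\A_T)\|_1^w$ --- by expressing the weighted $L^1$-norm and $(g(\A_T))(\xi)$ as slopes along the weak geodesic ray (Lemma \ref{lem:integral_limits}) and running a convexity/uniform-convergence argument; (iii) upgrade $p$ to a $T$-invariant point by Dervan's intersection-theoretic argument; and (iv) handle the relative aspect: the orthogonal projection $\beta$ of $\lambda$ onto $\mathfrak{t}$ is in general \emph{irrational}, so one cannot twist by $-\beta$ directly; instead one twists by rational approximations $\beta_n$, uses the twist formula for the Chow weight (Lemma \ref{lem:twisted_Chow_weight}), and passes to the limit --- which is possible only because of the uniform lower bound in (ii). A non-quantitative existence statement of the kind you posit would not survive this limiting argument, so the uniformity is not a technical luxury but essential.

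A second, related gap: your expansion is stated for the relative invariant, but the quantity that actually appears as the subleading coefficient is not the naive weighted Chow weight $\Ch_p^w$; it is the \emph{$T$-orthogonal} Chow weight $\Ch_p^T$, which corrects $\Ch_p^w$ by the terms $\sum_i\langle\lambda,\beta_i\rangle_{(\X,\A_T)}(h_{\beta_i}(p)-\hat h_{\beta_i})$ and is twist-invariant. Producing this requires tracking how the inner products and the weighted Futaki invariants themselves change under blowup (the paper's Lemmas \ref{lem:T-ip_blowup}, \ref{lem:blowup_inner_product} and \ref{lem:Futaki_blowup}), not just the Donaldson--Futaki invariant. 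Your remark that ``the relative correction must be handled'' gestures at this but leaves $\Ch_p(\X)$ undefined; with the naive definition the expansion, and hence the positivity statement you need, would be the wrong one. Finally, a smaller technical point: when $\X$ is singular one cannot simply blow up $\X$ along $C=\overline{\mathbb{C}^*p}$; the paper first passes to an equivariant resolution $\mathcal{Y}\to\X$ with snc central fibre and smooth proper transform $\hat C$, blows up $\hat C$, and perturbs the class by $-\epsilon^n b^*\mathcal{F}_T$ to obtain a genuine test configuration for $(\Bl_pX,\alpha_T-\epsilon E_T)$. On the positive side, your observation that maximality of $T$ forces the extremal field into $\mathfrak{t}$, so that any $T$-fixed point satisfies the hypotheses of \cite{Hal23}, is correct and is exactly how the paper resolves the tension between choosing $p$ for Chow positivity and needing $p$ relatively stable.
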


From this, we will easily obtain the following corollary:

\begin{corollary}\label{cor:main}
	Let \((X,\alpha_T)\) be a \((v,w)\)-cscK manifold, where \(T\) is maximal and the weight functions \(v,w\) satisfy Assumption \ref{ass:weights}. Then \((X,\alpha_T)\) is weighted K-polystable.
\end{corollary}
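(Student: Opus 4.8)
The plan is to deduce Corollary \ref{cor:main} from Theorem \ref{thm:main} by observing that, for a weighted cscK metric, the correction term distinguishing the relative weighted Donaldson--Futaki invariant from the ordinary one vanishes identically, so the two notions of stability collapse onto each other.

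First I would recall that a \((v,w)\)-cscK metric is precisely a \((v,w)\)-extremal metric whose weighted extremal vector field \(\chi\) vanishes: by definition \(\chi\) is the real-holomorphic field with \(\nabla^{1,0}S_{v,w}(\omega)=\chi\), so \(S_{v,w}(\omega)\) being constant is equivalent to \(\chi=0\). Equivalently, the existence of a \((v,w)\)-cscK metric forces the weighted Futaki invariant to vanish on all of \(\mathfrak{t}\), since \(\chi\) is the metric-independent dual of the weighted Futaki invariant under the weighted \(L^2\)-pairing on \(\mathfrak{t}\). In either formulation, the upshot is that the datum used to define the \emph{relative} correction is trivial.

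Next, since the hypotheses of Theorem \ref{thm:main} are satisfied, \((X,\alpha_T)\) is relatively weighted K-polystable: the relative weighted Donaldson--Futaki invariant \(\mathrm{DF}^{\mathrm{rel}}_{v,w}(\mathcal{X})\) is nonnegative for every \(T\)-equivariant test configuration \(\mathcal{X}\), with equality precisely for product configurations. By construction, \(\mathrm{DF}^{\mathrm{rel}}_{v,w}\) is obtained from the ordinary weighted invariant \(\mathrm{DF}_{v,w}\) by subtracting a term proportional to the pairing of the generator of \(\mathcal{X}\) with the extremal field \(\chi\). With \(\chi=0\) this correction vanishes, so \(\mathrm{DF}^{\mathrm{rel}}_{v,w}(\mathcal{X})=\mathrm{DF}_{v,w}(\mathcal{X})\) for all \(T\)-equivariant \(\mathcal{X}\). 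The relative polystability inequality and its equality characterisation therefore become exactly the defining inequality and equality case of ordinary weighted K-polystability. Crucially, because the weighted Donaldson--Futaki invariant is only defined for \(T\)-equivariant test configurations — the weights \(v,w\) being functions on the moment polytope require the \(T\)-action to extend — no separate reduction from general to equivariant test configurations is needed, in contrast with the classical (unweighted) Stoppa argument.

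The corollary is thus essentially formal, and the only genuine point to verify is that the relative correction term is indeed a scalar multiple of the \(\chi\)-component of the test configuration, so that it vanishes together with \(\chi\). I would extract the precise form of this term from the definition of the relative weighted Donaldson--Futaki invariant and confirm that the degenerate case \(\chi=0\) is handled by setting the correction to zero, rather than producing a \(0/0\) ambiguity in the orthogonal projection onto \(\chi\). Matching the equality cases — that ``relative product'' and ``product'' coincide when \(\chi=0\) — is then immediate.
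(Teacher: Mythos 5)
Your proposal is correct and takes essentially the same route as the paper: the paper's proof likewise reduces everything to the observation that the weighted cscK condition forces \(F_{v,w}\equiv 0\) on \(\mathfrak{t}\), so that \(\DF_{v,w}(\X,\A_T,\beta)=\DF_{v,w}(\X,\A_T)=\DF^T_{v,w}(\X,\A_T)\) for every test configuration and every twist \(\beta\in\mathfrak{t}\), and then invokes Theorem \ref{thm:main} to conclude that the equality case forces a product configuration. The only cosmetic difference is that the paper cites Lahdili and Inoue for weighted K-semistability, whereas you obtain the inequality directly from the relative polystability of Theorem \ref{thm:main}; since \(F_{v,w}=0\) these amount to the same thing, and your worry about a \(0/0\) ambiguity is moot because the correction term is a finite sum \(\sum_j\langle\lambda,\beta_j\rangle_{(\X,\A_T)}F_{v,w}(\beta_j)\) over an orthonormal basis, not a projection onto \(\chi\).
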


These results rely on the already proven fact that a weighted extremal manifold is relatively weighted K-semistable. This was first shown with respect to smooth degenerations by Lahdili using asymptotic slope techniques \cite{Lah19}, and then extended to singular degenerations by Inoue \cite{Ino20}, who used a continuity argument to reduce to Lahdili's result. Then, Apostolov--Jubert--Lahdili proved that weighted cscK manifolds are weighted K-polystable with respect to smooth degenerations \cite{AJL21}; Corollary \ref{cor:main} strengthens this stability result to possibly singular test configurations, using a completely different approach.

For the proof of the main result, we follow the basic strategy of previous works \cite{Sto09,SS11,Der18}. Namely, we first use that a weighted extremal manifold is relatively weighted K-semistable. We then take a test configuration with vanishing relative Donaldson--Futaki invariant, and seek to show it is a product test configuration; see Section \ref{sec:background} for the relevant definitions here. Assuming it is not a product, we show that there exists a \(T\)-invariant point \(p\in X\) such that \(\Bl_pX\) has a destabilising test configuration, which is obtained from the original test configuration through a standard process. This contradicts the existence result in \cite{Hal23} for weighted extremal metrics on blowups, and hence implies the test configuration must have been a product.

One novelty in our approach is that we compute the change in weighted Donaldson--Futaki invariant of a twisted test configuration using equivariant localisation techniques, in place of asymptotic slope formulae for the weighted Mabuchi functional used in \cite{Der18}. The use of equivariant localisation techniques has become increasingly prominent in K-stability, in particular through the work of Legendre \cite{Leg21} and Inoue \cite{Ino20}. Whereas Legendre's work computes the Donaldson--Futaki invariant directly via equivariant localisation on the total space of the test configuration, Inoue's approach instead pushes forward the equivariant classes to \(\mathbb{C}\) first, and then applies equivariant localisation. This latter approach was first seen in the work of Wang in the unweighted setting \cite{Wan12}, and is this method that we use here.

Along the way, we also correct some minor inaccuracies in \cite{Der18}. In particular, we introduce a new variant of the Chow weight that is invariant under twisting the test configuration by the torus action. In the polarised unweighted case, this invariant is nothing new as it can be obtained by twisting the test configuration by a rational one-parameter subgroup so that it is orthogonal to the torus. However, when the K{\"a}hler class is not rational, or in the presence of weight functions, this new Chow weight is an essential ingredient in expanding the Donaldson--Futaki invariant orthogonal to the torus. To prove the existence of a point with positive \(T\)-invariant Chow weight, we prove a kind of uniform Chow stability, namely that one can find a point with positive Chow weight bounded below by a positive constant times the norm of the test configuration. This eventually allows one to reduce to the rational case via an approximation argument.

\renewcommand{\abstractname}{Acknowledgements}
\begin{abstract}
	I thank Eiji Inoue and Abdellah Lahdili for kindly answering some questions on their works. I thank Ruadha{\'i} Dervan for comments on the paper and discussions on the existence of Chow stable points, and Zakarias Sj{\"o}str{\"o}m Dyrefelt for comments on the paper. I also thank Eveline Legendre and Lars Sektnan for their interest in the project.
\end{abstract}
	
\section{Background}\label{sec:background}
	
\subsection{Weighted extremal metrics}\label{subsec:weighted_extremal_metrics}
		
Here we very briefly review the definitions and our conventions on weighted cscK metrics; a more thorough treatment was already given in \cite{Hal23}. Let \((X,\omega)\) be a compact \(n\)-dimensional K{\"a}hler manifold. The Ricci curvature of \(\omega\) is \[\Ric(\omega):=-\frac{i}{2\pi}\d\db\log\omega^n,\] and the scalar curvature is \[S(\omega):=\Lambda_\omega\Ric(\omega)=\frac{n\,\Ric(\omega)\wedge\omega^{n-1}}{\omega^n}.\]  

Take: \begin{enumerate}
	\item \(T\) a real torus acting effectively on \((X,\omega)\) by hamiltonian isometries,
	\item \(\mu:X\to \mathfrak{t}^*\) a moment map for the \(T\)-action,
	\item \(P:=\mu(X)\subset\mathfrak{t}^*\) the moment polytope \cite{Ati82,GS82},
	\item \(v,w:X\to\mathbb{R}_{>0}\) positive smooth functions.
\end{enumerate} Our sign convention for moment maps is \[\langle d\mu,\xi\rangle = -\omega(\xi,-)\] for all \(\xi\in\mathfrak{t}\), where the pairing \(\langle-,-\rangle\) is the natural one on \(\mathfrak{t}^*\otimes\mathfrak{t}\), and we abuse notation by writing \(\xi\) for the vector field it generates on \(X\). 

\begin{definition}
	Given the above data, we define the \emph{\(v\)-weighted scalar curvature} of \(\omega\) to be \[S_v(\omega):=v(\mu)S(\omega)-2\Delta(v(\mu))+\frac{1}{2}\Tr(g\circ\Hess(v)(\mu)).\] Here  \(S(\omega)=\Lambda_\omega\Ric(\omega)\) is the scalar curvature, \(\Delta=-\d^*\d\) is the K{\"a}hler Laplacian of \(\omega\), and \(g\) is the Riemannian metric determined by \(\omega\).
\end{definition} 

Concretely, the term \(\Tr(g\circ\Hess(v)(\mu))\) may be written \[\sum_{a,b}v_{,ab}(\mu)g(\xi_a,\xi_b),\] where \(\xi_1,\ldots,\xi_r\) is a basis of \(\mathfrak{t}\), and \(v_{,ab}\) denotes the \(ab\)-partial derivative of \(v\) with respect to the dual basis of \(\mathfrak{t}^*\).

\begin{definition}[{\cite{Lah19}}]\label{def:weighted_extremal}
	The metric \(\omega\) is: \begin{enumerate}
		\item a \emph{\((v,w)\)-weighted cscK metric}, if \[S_v(\omega)=c_{v,w}w(\mu),\] where \(c_{v,w}\) is a constant,
		\item a \emph{\((v,w)\)-weighted extremal metric} if the function \[S_{v,w}(\omega):=S_v(\omega)/w(\mu)\] is a holomorphy potential with respect to \(\omega\).
	\end{enumerate}  
\end{definition}

Sometimes we shorten the full name to just a \((v,w)\)-cscK metric, or a \((v,w)\)-extremal metric. If the weight functions \(v\) and \(w\) are understood or irrelevant, we may also refer to such a metric simply as a weighted cscK metric, or a weighted extremal metric.

For \(\xi\in\mathfrak{t}\), we write \(\mu^\xi:=\langle\mu,\xi\rangle=\ell_\xi\circ\mu\), where \(\ell_\xi\in(\mathfrak{t}^*)^*\) is the element corresponding to \(\xi\in\mathfrak{t}\). Given a basis \(\{\xi_a\}\) for \(\mathfrak{t}\), we also write \(\mu^a\) in place of \(\mu^{\xi_a}\). The function \(\mu^\xi\) is then a hamiltonian for the infinitesimal action of \(\xi\) on \(X\). 

We are interested in finding a weighted extremal metric in the class \(\alpha:=[\omega]\). Given a \(T\)-invariant K{\"a}hler potential \(\varphi\in\mathcal{H}^T\), let \[\mu_\varphi:=\mu+d^c\varphi.\] That is, for any \(\xi\in\mathfrak{t}\), \[\mu_\varphi^\xi:=\mu^\xi+d^c\varphi(\xi),\] where we again abuse notation by writing \(\xi\) for the vector field it generates on \(X\). Our convention here is \(d^c:=\frac{i}{2}(\db-\d)\), so that \(dd^c=i\d\db\).

\begin{lemma}[{\cite[Lemma 1]{Lah19}}]\label{lem:moment_map_change}
	With the above definition, \(\mu_\varphi\) is a moment map for the \(T\)-action with respect to \(\omega_\varphi\), and \(\mu_\varphi(X)=P\), where \(P\) is the moment polytope for \(\mu=\mu_0\).
\end{lemma}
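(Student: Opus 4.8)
The plan is to verify the two assertions separately: first that $\mu_\varphi$ satisfies the defining differential identity of a moment map for $\omega_\varphi$ together with $T$-invariance, and second that its image equals $P$.

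For the moment map property, I would fix $\xi\in\mathfrak{t}$ and compute $d\mu_\varphi^\xi$ directly. By definition $\mu_\varphi^\xi=\mu^\xi+\iota_\xi d^c\varphi$, so the original moment map condition $d\mu^\xi=-\iota_\xi\omega$ gives $d\mu_\varphi^\xi=-\iota_\xi\omega+d(\iota_\xi d^c\varphi)$. Cartan's formula yields $d(\iota_\xi d^c\varphi)=\mathcal{L}_\xi(d^c\varphi)-\iota_\xi(dd^c\varphi)$. Since $T$ acts by biholomorphisms, the generator $\xi$ preserves the complex structure, so $\mathcal{L}_\xi$ commutes with both $d$ and $d^c$; combined with the $T$-invariance of $\varphi$ this gives $\mathcal{L}_\xi(d^c\varphi)=d^c(\mathcal{L}_\xi\varphi)=0$. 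Hence $d(\iota_\xi d^c\varphi)=-\iota_\xi(dd^c\varphi)$, and therefore $d\mu_\varphi^\xi=-\iota_\xi(\omega+dd^c\varphi)=-\iota_\xi\omega_\varphi$, which is exactly the required identity. A similar application of Cartan's formula, using that $T$ is abelian (so the fundamental vector fields of commuting elements satisfy $[\eta,\xi]=0$) and that $\mathcal{L}_\eta(d^c\varphi)=0$ for all $\eta\in\mathfrak{t}$, shows each $\mu_\varphi^\xi$ is $T$-invariant; as $T$ is a torus the coadjoint action is trivial, so this is the required equivariance and $\mu_\varphi$ is a genuine moment map for $\omega_\varphi$.

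For the image, I would invoke the Atiyah--Guillemin--Sternberg convexity theorem \cite{Ati82,GS82}: the moment image of a compact connected Hamiltonian $T$-manifold is the convex hull of the images of the connected components of the fixed locus $X^T$, on each of which the moment map is constant. The key point is that $X^T$ depends only on the $T$-action on $X$, not on the choice of $T$-invariant Kähler form, so it is the same for $\omega$ and for $\omega_\varphi$. Moreover, at any fixed point $p\in X^T$ the generating vector field $\xi$ vanishes, whence $\iota_\xi d^c\varphi|_p=d^c\varphi(\xi_p)=0$ and thus $\mu_\varphi^\xi(p)=\mu^\xi(p)$ for every $\xi\in\mathfrak{t}$. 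So $\mu_\varphi$ and $\mu$ agree on $X^T$, and in particular $\mu_\varphi(X^T)=\mu(X^T)$ is the same finite set of vertices. Taking convex hulls then gives $\mu_\varphi(X)=\mathrm{conv}\,\mu_\varphi(X^T)=\mathrm{conv}\,\mu(X^T)=\mu(X)=P$.

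The first computation is routine, being immediate from Cartan's formula once one observes $\mathcal{L}_\xi(d^c\varphi)=0$. The main conceptual ingredient is the second assertion: that the image is unchanged rests on the convexity theorem and the fixed-locus description of the polytope's vertices, together with the observation that the perturbation $\iota_\xi d^c\varphi$ vanishes precisely where it matters, namely on $X^T$. I expect the only point requiring care is to note that $\varphi$ being a genuine element of $\mathcal{H}^T$ (so that $\omega_\varphi$ is positive and connected) is exactly what legitimises applying the convexity theorem to the perturbed form $(X,\omega_\varphi)$.
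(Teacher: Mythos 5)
Your proof is correct: the Cartan-formula computation (using that $\mathcal{L}_\xi(d^c\varphi)=d^c(\mathcal{L}_\xi\varphi)=0$ for the real-holomorphic generators of the torus, plus triviality of the coadjoint action for equivariance) gives the moment map identity $d\mu_\varphi^\xi=-\iota_\xi\omega_\varphi$, and the Atiyah--Guillemin--Sternberg convexity theorem combined with the vanishing of $d^c\varphi(\xi)$ on the fixed locus $X^T$ gives $\mu_\varphi(X)=\mathrm{conv}\,\mu(X^T)=P$. The paper itself gives no proof but cites \cite[Lemma 1]{Lah19}, whose argument is essentially the one you have reproduced, so your approach matches the source.
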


With this lemma in mind, it then makes sense to search for a \((v,w)\)-extremal metric in the class \(\alpha\), which is \(T\)-invariant by fiat.

\begin{lemma}[{\cite[Lemma 2]{Lah19}}]\label{lem:const_cvw}
	With the above choice of moment map \(\mu_\varphi\), the following quantities are independent of the choice of \(\varphi\in\mathcal{H}^T\): \begin{enumerate}
		\item \(\int_Xv(\mu_\varphi)\,\omega_\varphi^n\),
		\item \(\int_Xv(\mu_\varphi)\,\Ric(\omega_\varphi)\wedge\omega_\varphi^{n-1}+\int_X\langle dv(\mu_\varphi),-\Delta_{\varphi}\mu_\varphi\rangle\,\omega_\varphi^n\),
		\item \(\int_X S_v(\omega_\varphi)\,\omega_\varphi^n\).
	\end{enumerate} It follows that the constant \(c_{v,w}\) of Definition \ref{def:weighted_extremal} is fixed, given by \[c_{v,w}=\frac{\int_XS_v(\omega)\,\omega^n}{\int_Xw(\mu)\,\omega^n}.\]
\end{lemma}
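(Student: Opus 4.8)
The plan is to show that each quantity is constant along an arbitrary smooth path $\varphi_t\in\mathcal{H}^T$. Writing $\psi:=\dot\varphi_t$ (a $T$-invariant function), the variational data are $\dot\omega_t=dd^c\psi$, $\tfrac{d}{dt}\omega_t^n=n\,dd^c\psi\wedge\omega_t^{n-1}=(\Delta_t\psi)\,\omega_t^n$, and, by Lemma \ref{lem:moment_map_change}, $\tfrac{d}{dt}\mu_{\varphi_t}^a=d^c\psi(\xi_a)$. The single identity driving the argument comes from Stokes' theorem together with $d\big(h(\mu)\big)=\sum_a h_{,a}(\mu)\,d\mu^a=-\sum_a h_{,a}(\mu)\,\iota_{\xi_a}\omega$ and the contraction formula $n\,d^c\psi\wedge\iota_{\xi_a}\omega\wedge\omega^{n-1}=d^c\psi(\xi_a)\,\omega^n$ (itself obtained from $\iota_{\xi_a}(d^c\psi\wedge\omega^n)=0$): for any smooth $h\colon P\to\mathbb{R}$,
\[
\int_X h(\mu)\,n\,dd^c\psi\wedge\omega^{n-1}=-\int_X\sum_a h_{,a}(\mu)\,d^c\psi(\xi_a)\,\omega^n. \tag{$\star$}
\]

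Given $(\star)$, part (1) is immediate: differentiating $\int_X v(\mu_{\varphi_t})\,\omega_{\varphi_t}^n$ produces exactly $\int_X\sum_a v_{,a}(\mu)\,d^c\psi(\xi_a)\,\omega^n+\int_X v(\mu)\,n\,dd^c\psi\wedge\omega^{n-1}$, and these cancel by $(\star)$ with $h=v$. The identical argument with $v$ replaced by $w$ shows $\int_X w(\mu_\varphi)\,\omega_\varphi^n$ is independent of $\varphi$, which is what will feed the denominator of $c_{v,w}$.

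For parts (2) and (3) I would first record an auxiliary identity tying the two expressions together. Running the integration by parts behind $(\star)$ with $\psi$ replaced by each component $\mu^a$, and using $d^c\mu^a(\xi_b)=\tfrac12 g(\xi_a,\xi_b)$ (a consequence of $d^c f(\xi)=-\tfrac12\,df(J\xi)$ and $d\mu^a=-\iota_{\xi_a}\omega$), one obtains
\[
\int_X\langle dv(\mu),-\Delta\mu\rangle\,\omega^n=\tfrac12\int_X\Tr\big(g\circ\Hess(v)(\mu)\big)\,\omega^n.
\]
Since $\int_X\Delta\big(v(\mu)\big)\,\omega^n=0$ and $n\,\Ric\wedge\omega^{n-1}=S(\omega)\,\omega^n$, this rewrites the definition of $S_v$ as $\int_X S_v(\omega)\,\omega^n=n\int_X v(\mu)\,\Ric(\omega)\wedge\omega^{n-1}+\int_X\langle dv(\mu),-\Delta\mu\rangle\,\omega^n$, so the integral in (3) equals $n$ times the first term of (2) plus the second term of (2); in particular (2) and (3) differ by $(n-1)\int_X v(\mu)\Ric(\omega)\wedge\omega^{n-1}$, so it suffices to establish two of these invariances. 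I would prove them by the same differentiate-and-integrate-by-parts method, now additionally inserting the standard variation of the Ricci form $\dot{\Ric}(\omega_t)=-\tfrac{1}{2\pi}dd^c(\Delta_t\psi)$ along with the moment-map variation above, and then applying $(\star)$-type manipulations repeatedly until all terms cancel.

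The cleanest conceptual organisation of these cancellations, and the lens I would use, is equivariant: with $d_\xi=d+\iota_\xi$, the equivariant forms $\omega_\varphi+\mu_\varphi^\xi$ and $\omega+\mu^\xi$ differ by $d_\xi(d^c\varphi)$, hence represent the same class in $H^*_T(X)$, and the equivariant Ricci class is likewise unchanged; parts (1)--(3) then express integrals of $v$ (and its derivatives) against Duistermaat--Heckman-type pushforwards of \emph{fixed} equivariant classes, which cannot depend on $\varphi$. I expect the Ricci term to be the main obstacle: controlling $\tfrac{d}{dt}\int_X v(\mu)\,\Ric(\omega)\wedge\omega^{n-1}$ requires the Ricci variation together with a careful double integration by parts, and this is precisely where the sign conventions for $d^c$, the Laplacian $\Delta$, and the moment map must be tracked with care. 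Once (1) and (3) are in hand, integrating the $(v,w)$-cscK equation $S_v(\omega)=c_{v,w}\,w(\mu)$ over $X$ and dividing yields $c_{v,w}=\dfrac{\int_X S_v(\omega)\,\omega^n}{\int_X w(\mu)\,\omega^n}$, with both integrals independent of the chosen metric in $\alpha$.
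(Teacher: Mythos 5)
The paper itself gives no proof of this lemma---it cites \cite[Lemma 2]{Lah19} verbatim---so your proposal has to stand on its own, and it does not. What you do prove is correct: identity $(\star)$ is right for the paper's conventions ($d\mu^{\xi}=-\iota_{\xi}\omega$, $d^{c}=\frac{i}{2}(\overline{\partial}-\partial)$), it yields a complete proof of (1) (and of the $w$-analogue needed for the denominator of $c_{v,w}$); your auxiliary identity $\int_X\langle dv(\mu),-\Delta\mu\rangle\,\omega^n=\frac{1}{2}\int_X\Tr(g\circ\Hess(v)(\mu))\,\omega^n$ is also correct, as is the consequence that (3) equals $n$ times the first term of (2) plus the second term of (2). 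The gap is that you never establish the invariance of (2) or (3), which is the substantive content of the lemma. ``Insert the Ricci variation and apply $(\star)$-type manipulations until all terms cancel'' is a plan, not a proof: differentiating $\int_X v(\mu_t)\Ric(\omega_t)\wedge\omega_t^{n-1}$ produces terms from the variations of $\mu_t$, of $\Ric(\omega_t)$ (bringing in the third-order quantity $dd^c\Delta_t\psi$), and of $\omega_t^{n-1}$, and for (2) additionally the variation of $-\Delta_t\mu_t$; the cancellation scheme among these is precisely the hard part, which you flag as ``the main obstacle'' and then leave unresolved.

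Nor does your equivariant fallback close this gap, because it covers only (1) and (3), not (2). For (3) the argument is sound in spirit: combining your auxiliary identity with the representative $\Ric(\omega_\varphi)+(-\Delta_\varphi\mu_\varphi)$ of $c_1(X)_T$ gives $\frac{1}{n!}\int_X S_v(\omega_\varphi)\,\omega_\varphi^n=(c_1(X)_T f'(\alpha_T))(\xi)$, a pairing of fixed equivariant classes (this is how Lemma \ref{lem:average_weighted_scalar_curvature} arises), though this requires weights as in Assumption \ref{ass:weights} (or polynomial weights plus an approximation step), whereas the lemma allows arbitrary smooth $v,w$. But (2) is \emph{not} such a pairing: for any analytic $h$, the pairing $(c_1(X)_T h(\alpha_T))(\xi)$ computed with the above representative equals $\frac{1}{(n-1)!}\int_X h^{(n-1)}(\mu^\xi)\Ric\wedge\omega^{n-1}+\frac{1}{n!}\int_X h^{(n)}(\mu^\xi)(-\Delta\mu^\xi)\,\omega^n$, so the two integrals always enter with relative coefficients $1:\frac{1}{n}$, whereas in (2) they enter with coefficients $1:1$. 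Hence the invariance of (2) is strictly finer information than the invariance of these equivariant pairings; equivalently, by your own relation between (2) and (3), it amounts to the invariance of $\int_X v(\mu_\varphi)\Ric(\omega_\varphi)\wedge\omega_\varphi^{n-1}$ on its own, which neither your computational sketch nor your cohomological argument delivers. To complete the proof you must carry out the variational computation for (2) (or for the Ricci integral) in full.
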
 

We will henceforth write \begin{equation}\label{eq:average_weighted_scalar_curvature}
	\hat{S}_{v,w}:=\frac{\int_X S_{v,w}(\omega) w(\mu) \omega^n}{\int_X w(\mu) \omega^n}
\end{equation} in place of \(c_{v,w}\), to indicate that this is the average of the weighted scalar curvature \(S_{v,w}(\omega)\) with respect to the measure \(w(\mu)\omega^n\).

\begin{remark}\label{rem:Ricci_moment_map}
	The significance of \(-\Delta_\varphi\mu_\varphi\) in \emph{(2)} of Lemma \ref{lem:const_cvw} is that it is a moment map for the Ricci curvature \(\Ric(\omega_\varphi)\), see \cite[Lemma 5]{Lah19}. That is, for any \(\xi\in\mathfrak{t}\), \[\langle d(-\Delta_\varphi\mu_\varphi)(x),\xi\rangle = \Ric(\omega_\varphi)(x)(-,\xi_x).\]
\end{remark}

\begin{example}\label{ex:weighted_cscK_metrics}
	Fix an element \(\xi\in\mathfrak{t}\), and denote by \(\ell_\xi:\mathfrak{t}^*\to\mathbb{R}\) the corresponding element of \((\mathfrak{t}^*)^*\). Let \(a\) be a constant such that \(a+\ell_\xi>0\) on \(P\). Many standard canonical metrics can be obtained from certain choices of the functions \(v,w\) \cite[Section 3]{Lah19}: \begin{enumerate}
		\item {\bf CscK:} Taking \(v\) and \(w\) constant, the weighted cscK equation reduces to \(S(\omega)=const\).
		\item {\bf Extremal:} Taking \(v\) and \(w\) constant again, a weighted extremal metric is precisely an extremal metric in the usual sense, meaning \(\db\nabla^{1,0}S(\omega)=0\).
		\item {\bf K{\"a}hler--Ricci soliton:} For \(X\) Fano and \(\alpha = c_1(X)\), a weighted extremal metric in \(\alpha\) with weights \(v=w=e^{\ell_\xi}\) and extremal field \(\xi\) is a K{\"a}hler--Ricci soliton.
		\item {\bf Extremal Sasaki:} Suppose that \([\omega]\) is the first Chern class \(c_1(L)\) of an ample line bundle \(L\to X\). A choice of K{\"a}hler metric \(\omega_\varphi\in\alpha\) then corresponds to a Sasaki metric on the unit circle bundle \(S\) of \(L^*\). Letting \[v:=(a+\ell_\xi)^{-n-1},\quad w:=(a+\ell_\xi)^{-n-3},\] a \((v,w)\)-extremal metric on \(X\) then corresponds to an extremal Sasaki metric on \(S\) with Sasaki--Reeb field \(\xi\) \cite{AC21,ACL21}.
		\item {\bf Conformally K{\"a}hler--Einstein Maxwell:} Letting \[v=(a+\ell_\xi)^{-2n+1},\quad w=(a+\ell_\xi)^{-2n-1},\] a \((v,w)\)-cscK metric on \(X\) then corresponds to a conformally K{\"a}hler Einstein--Maxwell metric \cite{Lah20}.
		\item {\bf \(v\)-soliton:} Suppose \(X\) is Fano and \(\alpha=c_1(X)\). Taking \(v\) arbitrary and defining \[w(p)=2v(p)(n+\langle d\log v(p),p\rangle),\] the \((v,w)\)-cscK equation then becomes \[\Ric(\omega)-\omega=i\d\db\log v(\mu),\] which is the \(v\)-soliton equation \cite{HL20}.
		\item {\bf \(\mu\)-cscK:} In \cite{Ino22,Ino20}, Inoue introduced and studied a class of \emph{\(\mu\)-cscK metrics}. These are a special class of weighted extremal metrics, given by the same weight functions \(v=w=e^{\ell_\xi}\) and extremal field \(\xi\) as for K{\"a}hler--Ricci solitons, only one drops the condition of \(X\) being Fano \cite[Section 2.1.6]{Ino22}.
	\end{enumerate}
\end{example}
			
Aside from the general \(v\)-solitons, notice that all of these examples are of a particular form, namely the weight functions are the composition of a linear functional \(\ell_\xi:\mathfrak{t}^*\to\mathbb{R}\) with a real analytic function \(\widetilde{v}\) or \(\widetilde{w}\) on a subset of \(\mathbb{R}\). In order to apply Inoue's equivariant intersection theory for weighted cscK metrics \cite{Ino20}, we will assume that our weight functions take this form in later sections.

Furthermore, suppose we choose another moment map \(\mu'=\mu+\eta\) where \(\eta\in\mathfrak{t}^*\) is a constant. We then define \(v',w':P'\to\mathbb{R}\) by \(v'(p'):=v(p'-\eta)\) and \(w'(p'):=w(p'-\eta)\), where \(P'=P+\eta\) is the moment polytope of \(\mu'\). The class \(\alpha\) then admits a weighted extremal (resp. weighted cscK) metric for the weight functions \(v,w\) and moment map \(\mu\) if and only if it admits a weighted extremal (resp. weighted cscK) metric for the weight functions \(v',w'\) and moment map \(\mu'\). It follows we may freely choose our normalisation of the moment map in what follows. 

\begin{assumption}\label{ass:weights}
	We assume: \begin{enumerate}
	\item The weight functions \(v,w:P\to\mathbb{R}\) are of the form \(v(p)=\widetilde{v}(\langle\xi,p\rangle)\), \(w(p)=\widetilde{w}(\langle\xi,p\rangle)\), where:\vspace{1mm} \begin{enumerate}
		\item \(\xi\in\mathfrak{t}\) is a fixed element of the Lie algebra,\vspace{1mm}
		\item \(\widetilde{v}=f^{(n)}\) and \(\widetilde{w}=g^{(n+1)}\), where \(f\) and \(g\) are real analytic functions defined on a neighbourhood of \(\langle P,\xi\rangle\subset\mathbb{R}\).\vspace{1mm}
	\end{enumerate}
		\item The moment polytope \(P\) is translated so that the closed interval \(\langle P,\xi\rangle\subset\mathbb{R}\) has \(0\) as its midpoint, hence the analytic functions \(f\) and \(g\) have power series expansions about \(0\in\mathbb{R}\) that converge on a neighbourhood of \(\langle P,\xi\rangle\).
	\end{enumerate}
\end{assumption}

As a remark, we will later assume that the torus \(T\) is maximal. This does not impose any condition on the existence of solutions: suppose that \(\omega\) is a weighted cscK metric for the torus \(T\), and let \(T'\supset T\) be a larger torus in the hamiltonian isometry group of \((X,\omega)\). There is a natural projection \(p:(\mathfrak{t}')^*\to\mathfrak{t}^*\), which is compatible with the moment maps \(\mu:X\to\mathfrak{t}^*\) and \(\mu':X\to(\mathfrak{t}')^*\). Defining \(v':= v\circ p\) and \(w':= w\circ p \), we see that \(v'(\mu') = v(\mu)\) and \(w'(\mu') = w(\mu)\). It follows easily from the definitions that a \((v,w)\)-cscK (resp. extremal) metric for \(T\) is a \((v',w')\)-cscK (resp. extremal) metric for \(T'\), and vice versa.
		
\subsection{K{\"a}hler test configurations}
		
We next review the theory of K{\"a}hler test configurations, which was introduced in \cite{SD18, DR17}. We take a particular interest in the equivariant geometry of test configurations, via moment maps. We will assume some familiarity with equivariant cohomology (including Cartan's formulation) \cite[Appendix]{Ino20} as well as K{\"a}hler complex spaces \cite{Fis76} and Bott--Chern cohomology \cite[Section 4.6.1]{BG13}.

Let \((X,\omega)\) be a compact K{\"a}hler manifold, and \(T\) be a compact torus acting on \((X,\omega)\) by hamiltonian isometries. Note the action extends to a \(T^{\mathbb{C}}\) action. We write \(\alpha=[\omega]\) for the K{\"a}hler class and identify \(\mathbb{P}^1=\mathbb{C}\cup\{\infty\}\).
				
\begin{definition}
	A \emph{\(T\)-equivariant K{\"a}hler test configuration} for \((X,\alpha)\) is a pair \((\X,\A)\), where: \begin{enumerate}
		\item \(\X\) is a normal compact K{\"a}hler complex space,
		\item \(\A\in H^{1,1}_{\mathrm{BC}}(\X;\mathbb{R})\) is a K{\"a}hler class on \(\X\),
	\end{enumerate} along with the following data: \begin{enumerate}[label=(\roman*)]
		\item A \(T^{\mathbb{C}}\times\mathbb{C}^*\)-action on \(\X\) which preserves the class \(\A\),
		\item A \(T^{\mathbb{C}}\times\mathbb{C}^*\)-equivariant flat surjection \(\pi:\X\to\mathbb{P}^1\), where \(T^{\mathbb{C}}\) acts trivially on \(\mathbb{P}^1\) and \(\mathbb{C}^*\) acts on \(\mathbb{P}^1\) by scalar multiplication on \(\mathbb{C}\subset\mathbb{P}^1\),
		\item A \(T^{\mathbb{C}}\times\mathbb{C}^*\)-equivariant isomorphism \begin{equation}\label{eq:general_fibre}
		\Psi:(\X,\A)|_{\pi^{-1}(\mathbb{P}^1\backslash\{0\})}\cong(X\times\mathbb{P}^1\backslash\{0\},p^*\alpha),
		\end{equation} where \(p:X\times\mathbb{P}^1\backslash\{0\}\to X\) is the projection, and
		\(T^{\mathbb{C}}\times\mathbb{C}^*\) acts diagonally on \(X\times\mathbb{P}^1\backslash\{0\}\).
	\end{enumerate} Two \(T\)-equivariant K{\"a}hler test configurations are \emph{isomorphic} if they are \(T^{\mathbb{C}}\times\mathbb{C}^*\)-equivariantly isomorphic to each other.
\end{definition}
				
Since the \(T\)-action on \((X,\omega)\) is hamiltonian, it admits a moment map \(\mu:X\to\mathfrak{t}^*\). A choice of moment map determines an extension of \(\alpha\) to an equivariant cohomology class \(\alpha_T:=[\omega+\mu]\in H^2_T(X;\mathbb{R})\), by Cartan's formulation of equivariant cohomology. Given such an extension, we will show that the class \(\A\) of a \(T\)-equivariant K{\"a}hler test configuration \((\X,\A)\) admits a unique extension to an equivariant class \(\A_T\) that is compatible with \(\alpha_T\) under the isomorphism \eqref{eq:general_fibre}; for this reason, we will later write \(T\)-equivariant K{\"a}hler test configurations as \((\X,\A_T)\), and refer to them simply as \emph{test configurations}.
				
Let \(\Omega\) be a \(T\times S^1\)-invariant K{\"a}hler form on \(\X\) representing the class \(\A\). To find \(\A_T\), we will construct a moment map on \(\X\) for the \(T\times S^1\)-action with respect to \(\Omega\). In order to do this, it suffices to construct a hamiltonian function for the infinitesimal action of any one-parameter subgroup \(\beta:\mathbb{C}^*\to T^{\mathbb{C}}\times\mathbb{C}^*\) on \(\X\). Since \(\beta(\mathbb{C}^*)\) preserves the class \(\A\), for all \(t\in\mathbb{R}\) there exists a smooth function \(\varphi_t:\X\to\mathbb{R}\) such that \[\beta(e^{-t})^*\Omega-\Omega=i\d\db\varphi_t,\] by definition of Bott--Chern cohomology.
				
\begin{lemma}\label{lem:hamiltonians} There exist \(T\times S^1\)-invariant smooth functions 	\(\varphi_t:\X\to\mathbb{R}\) such that:
	\begin{enumerate}
		\item For all \(t\in\mathbb{R}\), \[\beta(e^{-t})^*\Omega-\Omega=i\d\db\varphi_t,\]
		\item The \(\varphi_t\) depend smoothly on \(t\),
		\item On the regular locus \(\X_{\mathrm{reg}}\) of \(\X\), the function \(-\dot{\varphi}_0/2\) is hamiltonian for the real holomorphic vector field \(V\) generating the \(\beta(S^1)\)-action: \[\Omega(-,V)=-d\dot{\varphi}_0/2,\]
		\item For all \(t\in\mathbb{R}\), \[\beta(e^{-t})^*\dot{\varphi}_0=\dot{\varphi}_t.\]
	\end{enumerate}
\end{lemma}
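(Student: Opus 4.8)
The plan is to reduce the entire lemma to the construction of a single $T\times S^1$-invariant function $\psi$, namely the one destined to be $\dot\varphi_0$, and then to produce the whole family by integrating its pullbacks along the flow. Let $\Upsilon$ be the real holomorphic vector field on $\X$ generating the real one-parameter group $t\mapsto\beta(e^{-t})$; up to sign this is $JV$, where $V$ generates the $\beta(S^1)$-action. Once $\psi$ is found, I would define
\[
\varphi_t:=\int_0^t\beta(e^{-s})^*\psi\,ds,
\]
and then verify all four properties essentially formally.

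To construct $\psi$: since $\beta(\mathbb{C}^*)$ preserves the Bott--Chern class $\A=[\Omega]$, the path $t\mapsto\beta(e^{-t})^*\Omega$ is a smooth family of K\"ahler forms all representing $\A$, so its derivative $\mathcal{L}_\Upsilon\Omega$ at $t=0$ lies in the zero Bott--Chern class. By the definition of $H^{1,1}_{\mathrm{BC}}(\X;\mathbb{R})$ on the normal compact K\"ahler space $\X$, there is a smooth function $\psi_0$ with $\mathcal{L}_\Upsilon\Omega=i\d\db\psi_0$, and averaging $\psi_0$ over the compact group $T\times S^1$ yields a $T\times S^1$-invariant $\psi$ with $\mathcal{L}_\Upsilon\Omega=i\d\db\psi$. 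Averaging preserves the equation because $\Upsilon$ commutes with $T\times S^1$, as $\beta$ lands in the abelian group $T^{\mathbb{C}}\times\mathbb{C}^*$, so $\mathcal{L}_\Upsilon\Omega$ is already invariant. With this $\psi$, smoothness in $t$ (property (2)) and $T\times S^1$-invariance of $\varphi_t$ are immediate; the fundamental theorem of calculus gives $\dot\varphi_t=\beta(e^{-t})^*\psi$, whence $\dot\varphi_0=\psi$ and $\beta(e^{-t})^*\dot\varphi_0=\dot\varphi_t$, which is property (4). For property (1) I would differentiate in $t$: because the flow of $\Upsilon$ is $\beta(e^{-t})$ and each $\beta(e^{-t})$ is a biholomorphism,
\[
\tfrac{d}{dt}\big(\beta(e^{-t})^*\Omega-\Omega\big)=\beta(e^{-t})^*\mathcal{L}_\Upsilon\Omega=i\d\db\big(\beta(e^{-t})^*\psi\big)=\tfrac{d}{dt}\,i\d\db\varphi_t,
\]
and since both sides vanish at $t=0$, integrating yields $\beta(e^{-t})^*\Omega-\Omega=i\d\db\varphi_t$ for all $t$.

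Property (3) is the only assertion restricted to the regular locus, and there I would work on the genuine K\"ahler manifold $(\X_{\mathrm{reg}},\Omega)$. First, $V$ is a Killing field preserving $\Omega$, and it vanishes at the fixed points of the $\beta(S^1)$-action on the compact $\X$ (a holomorphic circle action on a compact K\"ahler space has fixed points), so $V$ is Hamiltonian. The identification $\dot\varphi_0=\psi=-2h$, with $h$ a Hamiltonian for $V$, then follows from Cartan's formula $\mathcal{L}_\Upsilon\Omega=d\iota_\Upsilon\Omega$ together with the standard K\"ahler identities $\iota_{JV}\Omega=-(\iota_V\Omega)\circ J$ and $df\circ J=-2\,d^cf$: these give $\mathcal{L}_{JV}\Omega=i\d\db(-2h)$, and comparing with $\mathcal{L}_\Upsilon\Omega=i\d\db\psi$ (using that a pluriharmonic function on the compact space is constant, and that $\iota_V\Omega$ vanishes at a zero of $V$ to kill the remaining closed-form ambiguity) pins down $\Omega(-,V)=-d\dot\varphi_0/2$. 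The sign of $\Upsilon=\pm JV$, and hence of this identity, is fixed by the paper's conventions for the $\mathbb{C}^*$-action and for $\Omega=g(J\cdot,\cdot)$, and must be tracked to land exactly on $-\dot\varphi_0/2$.

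The main obstacle is the singular setting rather than any single computation. One must know that Bott--Chern cohomology on the normal compact K\"ahler space $\X$ is set up so that a smooth form in the zero class is honestly $i\d\db$ of a global smooth function (this is what supplies $\psi$), and that pullback by $\beta(e^{-t})$, the $t$-derivative, $i\d\db$, and the averaging all make sense on $\X$ in the complex-space sense. The Hamiltonian identity of property (3) is genuinely a manifold statement, so it must be carried out on $\X_{\mathrm{reg}}$, where Cartan calculus applies, and read as a statement there; establishing that $V$ is Hamiltonian and that its potential is exactly $-\dot\varphi_0/2$ — rather than a Hamiltonian differing by a closed $1$-form — is the delicate point, and is precisely where the preserved class and the existence of fixed points enter.
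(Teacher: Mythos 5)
Your strategy works in the smooth case: there $\mathcal{L}_\Upsilon\Omega = d\iota_\Upsilon\Omega$ is a $d$-exact $(1,1)$-form on a compact K\"ahler manifold, so Hodge theory (the $\d\db$-lemma, or closedness of the image of $i\d\db$) supplies $\psi$, and integrating its pullbacks along the flow gives (1), (2), (4) at once, with (3) following from comparison of potentials. The genuine gap is the singular case, which is the heart of the lemma. Your construction hinges on a function $\psi$, \emph{smooth on the complex space} $\X$, with $\mathcal{L}_\Upsilon\Omega = i\d\db\psi$, and your justification --- that the derivative of a family of forms lying in the fixed class $\A$ lies in the zero Bott--Chern class --- is circular: differentiating the family of potentials in $t$ presupposes they can be chosen to depend differentiably on $t$, which is precisely assertion (2). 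The non-circular route would be to note that each difference quotient $t^{-1}(\beta(e^{-t})^*\Omega - \Omega)$ is $i\d\db$ of a smooth function and pass to the limit, but that requires the image of $i\d\db : C^\infty(\X) \to \Omega^{1,1}(\X)$ to be closed under smooth limits; on a manifold this follows from elliptic theory, whereas on a singular complex space (where smooth functions and forms are defined via local embeddings) no such statement is available. Pulling back to an equivariant resolution and applying the $\d\db$-lemma there gives $\pi^*\mathcal{L}_\Upsilon\Omega = i\d\db\widetilde{\psi}$ with $\widetilde{\psi}$ constant on fibres, but the induced function on $\X$ is then only continuous on $\X$ and smooth on $\X_{\mathrm{reg}}$; nothing guarantees it is smooth on $\X$ in the complex-space sense, which the lemma (and its use in Corollary \ref{cor:moment_map} to build a moment map) requires. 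The paper's proof is engineered to avoid exactly this: it starts from the potentials $\varphi_t$ furnished by the definition of the test configuration (smooth on $\X$, but not a priori smooth in $t$) and corrects them only by \emph{constants} $c_{\epsilon,t}$, importing smoothness in $t$ by comparing, on a resolution, with smooth-in-$t$ families constructed via the smooth case for the honest K\"ahler forms $\eta$ and $\widetilde{\Omega} + \epsilon\eta$.

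Two further points. Your claim that ``a holomorphic circle action on a compact K\"ahler space has fixed points'' is false --- translations on a complex torus are a counterexample --- so it cannot be the reason $V$ has zeros. Zeros do exist, but only via the structure of the test configuration: $\beta(S^1) \subset T \times S^1$ preserves the fibre $\X_\infty \cong X$ (or any fibre, when $\beta$ is vertical), where its generator is induced by an element of $\mathfrak{t}$, whose flow on $(X,\omega)$ is hamiltonian with hamiltonian built from $\mu$ and therefore vanishes at the critical points of that hamiltonian. Finally, the ingredients of your step (3) --- the LeBrun--Simanca existence of a potential and ``pluriharmonic implies constant'' --- are theorems about compact manifolds, so they cannot be invoked on the noncompact $\X_{\mathrm{reg}}$; in the singular case they, too, must be routed through a resolution, as the paper does by deducing (3) and (4) for $\varphi_t$ from the corresponding properties of $f_{\epsilon,t}$ and $g_t$.
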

				
\begin{proof}
	Let us first prove this assuming \(\X\) is smooth. In this case, the map \(i\d\db:C_0^\infty(\X;\mathbb{R})\to\Omega^{1,1}(\X;\mathbb{R})\) has a \(T\times S^1\)-equivariant left-inverse \(L\), since the Laplacian \(\Delta_\Omega:=\Lambda_{\Omega}\circ i\d\db:C^\infty_0(\X;\mathbb{R})\to C^\infty_0(\X;\mathbb{R})\) is invertible and \(T\times S^1\)-equivariant. We define \[\varphi_t:=L(\beta(e^{-t})^*\Omega-\Omega),\] which is clearly smooth in \(t\) and \(T\times S^1\)-invariant. Since \(\beta\) preserves \(\A\), there exist \(\varphi_t'\) satisfying \(\beta(e^{-t})^*\Omega-\Omega=i\d\db \varphi_t'\). Hence \[\varphi_t=L(i\d\db\varphi'_t)=\varphi'_t-\overline{\varphi}'_t,\] where \(\overline{\varphi}'_t\) is the average of \(\varphi'_t\). It follows that \(\beta(e^{-t})^*\Omega-\Omega=i\d\db \varphi_t\), and so the \(\varphi_t\) are \(T\times S^1\)-invariant and satisfy (1) and (2).
	
	Differentiating the equation \(\beta(e^{-t})^*\Omega-\Omega=i\d\db \varphi_t\) at \(t=0\), we produce \[\mathcal{L}_{\mathcal{J}V}\Omega=i\d\db\dot{\varphi}_0,\] where \(V=V^{1,0}+V^{0,1}\) is the real holomorphic vector field generating the \(\beta(S^1)\)-action and \(\mathcal{J}\) is the almost complex structure of \(\X\). This can be rearranged using Cartan's magic formula to get \[d(\Omega(\mathcal{J}V,-)-d^c\dot{\varphi}_0)=0,\] where \(d^c:=\frac{i}{2}(\db-\d)\). Since the vector field \(V\) is hamiltonian, it has a fixed point. Hence by \cite[Theorem 1]{LS94}, there exists a smooth function \(f:\X\to\mathbb{C}\) such that \(\Omega(V^{1,0},-)=\db f\). Since \(V\) is Killing, \(f\) is real-valued by \cite[p. 64]{Sze14}. It follows that \(\Omega(\mathcal{J}V,-)=2d^cf\), which in turn gives \(\Omega(-,V)=-df\). Hence \(dd^c(2f-\dot{\varphi}_0)=0\), so \(2f\) and \(\dot{\varphi}_0\) differ by a constant. Therefore \(\Omega(-,V)=-d\dot{\varphi}_0/2\) and (3) holds.
	
	Finally to show (4), differentiating \(\beta(e^{-t})^*\Omega-\Omega=i\d\db \varphi_t\) at arbitrary \(t\) yields \[\beta(e^{-t})^*i\d\db\dot{\varphi}_0=i\d\db\dot{\varphi}_t.\] Hence \(\beta(e^{-t})^*\dot{\varphi}_0=\dot{\varphi}_t+C(t)\), where \(C:\mathbb{R}\to\mathbb{R}\) is a smooth function such that \(C(0)=0\). If we replace \(\varphi_t\) with \(\varphi_t+\int_0^tC(s)\,ds\) then each of the conditions (1)--(3) are maintained, and (4) is also satisfied.
				
	In the case where \(\X\) is singular, begin by choosing smooth functions \(\varphi_t\) satisfying \(\beta(e^{-t})^*\Omega-\Omega=i\d\db\varphi_t\), which we do not require to vary smoothly with \(t\). Let \(\widetilde{\X}\) be an equivariant resolution of singularities, and denote by \(\widetilde{\Omega}\) (resp. \(\widetilde{\varphi}_t\)) the pullback of \(\Omega\) (resp. \(\varphi_t\)) to \(\widetilde{\X}\). Choose an invariant K{\"a}hler form \(\eta\) on \(\widetilde{\X}\). By the smooth case, there exist functions \(g_t\) varying smoothly with \(t\) such that \[\beta(e^{-t})^*\eta-\eta=i\d\db g_t,\] and for each \(\epsilon>0\) there are functions \(f_{\epsilon,t}\) varying smoothly with \(t\) satisfying \[\beta(e^{-t})^*(\widetilde{\Omega}+\epsilon\eta)-(\widetilde{\Omega}+\epsilon\eta)=i\d\db f_{\epsilon,t}.\] Note we also have \[\beta(e^{-t})^*(\widetilde{\Omega}+\epsilon\eta)-(\widetilde{\Omega}+\epsilon\eta)=i\d\db(\widetilde{\varphi}_t+\epsilon g_t).\] It follows that \(\widetilde{\varphi}_t+\epsilon g_t=f_{\epsilon,t}-c_{\epsilon,t}\), where \(c_{\epsilon,t}\) is a constant depending on \(\epsilon\) and \(t\). Rearranging we get \(\widetilde{\varphi}_t+c_{\epsilon,t}=f_{\epsilon,t}-\epsilon g_t\). Since the right-hand side depends smoothly on \(t\), we can fix \(\epsilon\) and replace \(\varphi_t\) with \(\varphi_t+c_{\epsilon,t}\) so that the functions \(\varphi_t\) vary smoothly with \(t\). Finally since \(f_{\epsilon,t}\) and \(g_t\) are \(T\times S^1\)-invariant and satisfy (3) and (4) for their respective K{\"a}hler metrics, the \(\varphi_t\) are also \(T\times S^1\)-invariant and satisfy (3) and (4).
\end{proof}	
				
\begin{corollary}\label{cor:moment_map}
	There exists a smooth function \(m:\X\to\mathfrak{t}^*\oplus\mathbb{R}\) that is a \(T\times S^1\)-moment map for \(\Omega\), in the sense that: \begin{enumerate}
		\item \(m\) is \(T\times S^1\)-equivariant, and
		\item For all \(\xi\in\mathfrak{t}\oplus \mathbb{R}\), \(\langle dm,\xi\rangle = \Omega(-,\xi)\) on the regular locus \(\X_{\mathrm{reg}}\) of \(\X\).
	\end{enumerate}
\end{corollary}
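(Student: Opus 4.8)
The plan is to package the hamiltonians produced in Lemma \ref{lem:hamiltonians} into a single $(\mathfrak{t}\oplus\mathbb{R})^*$-valued function, exploiting that both the moment map condition and the equivariance condition are linear in the Lie algebra variable. Since $T\times S^1$ is abelian, its coadjoint action is trivial, so \emph{equivariance} of $m$ is equivalent to each component function being $T\times S^1$-invariant; this is already built into Lemma \ref{lem:hamiltonians}. The only genuine point to arrange is honest $\mathbb{R}$-linearity of the assignment $\xi\mapsto m^\xi$, which I fix by defining $m$ on a basis and extending.

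Concretely, first I would choose a basis $\xi_1,\dots,\xi_{r+1}$ of $\mathfrak{t}\oplus\mathbb{R}$ lying in the lattice of one-parameter subgroups, so that each $\xi_i$ exponentiates to a one-parameter subgroup $\beta_i:\mathbb{C}^*\to T^{\mathbb{C}}\times\mathbb{C}^*$ with $\beta_i(S^1)\subset T\times S^1$. Applying Lemma \ref{lem:hamiltonians} to each $\beta_i$ produces $T\times S^1$-invariant functions $\varphi_t^{(i)}$ depending smoothly on $t$, and I set
\[
m^{\xi_i}:=-\tfrac{1}{2}\dot{\varphi}_0^{(i)}\in C^\infty(\X;\mathbb{R}).
\]
By part (3) of the lemma, $dm^{\xi_i}=\Omega(-,\xi_i)$ on $\X_{\mathrm{reg}}$, where I write $\xi_i$ also for the fundamental vector field it generates. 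I then define $m:\X\to\mathfrak{t}^*\oplus\mathbb{R}$ by $m:=\sum_i m^{\xi_i}\,\xi_i^\vee$, with $\{\xi_i^\vee\}$ the dual basis of $\{\xi_i\}$, so that $\langle m,\xi_i\rangle=m^{\xi_i}$.

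It then remains to verify the two defining properties. Smoothness of $m$ is immediate since each $m^{\xi_i}$ is smooth on all of $\X$. For the moment map condition, note that both $\xi\mapsto\langle dm,\xi\rangle$ and $\xi\mapsto\Omega(-,\xi)$ are $\mathbb{R}$-linear in $\xi$: the former by construction, the latter because the fundamental vector field depends linearly on $\xi$. Since these two maps agree on the basis $\{\xi_i\}$ over $\X_{\mathrm{reg}}$ by part (3) of the lemma, they agree for all $\xi\in\mathfrak{t}\oplus\mathbb{R}$, which is condition (2). Finally, equivariance (condition (1)) reduces, as noted, to invariance of each $m^{\xi_i}$, which holds because $\dot{\varphi}_0^{(i)}$ is $T\times S^1$-invariant.

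The construction is essentially formal once Lemma \ref{lem:hamiltonians} is in hand; the one subtlety worth flagging is that the hamiltonian for a given vector field is only determined up to an additive constant, so naively setting $m^\xi=-\dot{\varphi}_0^{(\xi)}/2$ for every $\xi$ need not be additive. Defining $m$ on a basis and extending $\mathbb{R}$-linearly circumvents this, and the linearity of the moment map condition then guarantees the extension still solves $\langle dm,\xi\rangle=\Omega(-,\xi)$ for all $\xi$ simultaneously. No further analysis on the singular locus is needed, since all the functions involved are globally smooth and the defining identity is only required on $\X_{\mathrm{reg}}$.
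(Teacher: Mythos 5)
Your proposal is correct and follows essentially the same route as the paper: choose an integral basis of $\mathfrak{t}\oplus\mathbb{R}$, apply Lemma \ref{lem:hamiltonians} to each basis element to get invariant hamiltonians, assemble them via the dual basis, and note that equivariance reduces to invariance of the components since the (co)adjoint action of the abelian group $T\times S^1$ is trivial. Your explicit remark about why one must define $m$ on a basis and extend linearly (rather than assigning a hamiltonian to every $\xi$ independently, which risks failing additivity) is a nice clarification of a point the paper's proof handles implicitly.
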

				
\begin{proof}
	As the Lie algebra of a compact torus, \(\mathfrak{t}\oplus\mathbb{R}\) has a natural lattice. We therefore choose an integral basis for \(\mathfrak{t}\oplus\mathbb{R}\), and apply the previous lemma to show that each basis vector has a \(T\times S^1\)-invariant hamiltonian function. Collecting these hamiltonian functions in a vector using the dual basis for \(\mathfrak{t}^*\oplus\mathbb{R}^*\), we get the moment map \(m\) which is automatically equivariant since the adjoint action on \(\mathfrak{t}\oplus\mathbb{R}\) is trivial.
\end{proof}

When we wish to refer to the component of \(m\) taking values in \(\mathfrak{t}^*\) we shall write \(m_T\), and similarly write \(m_{S^1}\) for the component in \(\mathbb{R}^*=\mathrm{Lie}(S^1)^*\). We are interested in normalising the moment map \(m_T\) so that it is suitably compatible with the original moment map \(\mu:X\to\mathfrak{t}^*\). This is achievable by the following result, which follows easily from \cite[Lemma 7]{Lah19}:
			
\begin{lemma}\label{lem:moment_map_normalisation}
	Denote by \(m_T:\X\to\mathfrak{t}^*\) the \(T\)-moment map for \(\Omega\). We may normalise \(m_T\) so that for each \(\tau\in\mathbb{P}^1\), \(m_T(\X_\tau)=P:=\mu(X)\). In particular, under the isomorphism \(\Psi\) from equation \eqref{eq:general_fibre}, \[\Psi_*[\Omega+m_T]|_{\pi^{-1}(\mathbb{P}^1\backslash\{0\})}=p^*[\omega+\mu]\in H^2_T(X\times\mathbb{P}^1\backslash\{0\};\mathbb{R}).\]
\end{lemma}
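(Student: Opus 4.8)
The plan is to compare, for each $\tau\in\mathbb{P}^1\setminus\{0\}$, the equivariant class that $[\Omega+m_T]$ restricts to on the fibre $\X_\tau$ with the class $\alpha_T=[\omega+\mu]$, and then to fix the single constant $c\in\mathfrak t^*$ by which $m_T$ is ambiguous so that these agree simultaneously for all such $\tau$. By Corollary \ref{cor:moment_map} a $T$-moment map $m_T$ exists and is unique up to adding a constant $c\in\mathfrak t^*$, and this shift changes the equivariant class $[\Omega+m_T]$ by $c\in\mathfrak t^*\subset H^2_T(X)$. Since $T$ preserves each fibre, the inclusion $\iota_\tau\colon\X_\tau\hookrightarrow\X$ is $T$-equivariant, so $\iota_\tau^*[\Omega+m_T]\in H^2_T(\X_\tau)$; for $\tau\neq 0$ the isomorphism $\Psi$ of \eqref{eq:general_fibre} identifies $\X_\tau$ $T$-equivariantly with $X$ (as $T$ acts trivially on $\mathbb{P}^1$), giving maps $f_\tau:=\iota_\tau\circ(\Psi|_{\X_\tau})^{-1}\colon X\to\X$ and classes $f_\tau^*[\Omega+m_T]\in H^2_T(X)$.

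The \emph{key step} is that $f_\tau^*[\Omega+m_T]$ is independent of $\tau\in\mathbb{P}^1\setminus\{0\}$. This holds because $\mathbb{P}^1\setminus\{0\}$ is connected and $T$ acts trivially on it, so a path $\gamma$ from $\tau$ to $\tau'$ produces a $T$-equivariant homotopy $H(x,u)=\Psi^{-1}(x,\gamma(u))$ from $f_\tau$ to $f_{\tau'}$; as equivariant cohomology is a $T$-homotopy invariant, $f_\tau^*=f_{\tau'}^*$. Denote the common class by $A_T$. Its image in $H^2(X)$ is $f_\tau^*[\Omega]=\alpha$, so $A_T$ is an equivariant extension of $\alpha$. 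Two such extensions differ by a constant in $\mathfrak t^*$ (realised by adding a constant to the moment map; in the Cartan model one first makes the underlying closed forms equal by equivariant exactness, after which the moment parts differ by a constant linear map $\mathfrak t\to\mathbb{R}$), so there is a unique $c$ with $A_T=[\omega+\mu]+c$. Replacing $m_T$ by $m_T-c$ achieves $A_T=\alpha_T$, which is exactly the asserted identity $\Psi_*[\Omega+m_T]|_{\pi^{-1}(\mathbb{P}^1\setminus\{0\})}=p^*[\omega+\mu]$.

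Finally I would read off the moment polytope. For $\tau\neq 0$ the fibre is smooth and $\Psi^*(\Omega|_{\X_\tau})$ is a $T$-invariant Kähler form in $\alpha$, hence equals $\omega_{\varphi_\tau}$ for some $\varphi_\tau\in\mathcal{H}^T$ (by the $\d\db$-lemma and averaging), while $\Psi_*(m_T|_{\X_\tau})$ is a moment map for it representing $\alpha_T$. By uniqueness of a moment map for a fixed form within a fixed normalisation it equals $\mu_{\varphi_\tau}=\mu+d^c\varphi_\tau$, whose image is $P$ by Lemma \ref{lem:moment_map_change}; thus $m_T(\X_\tau)=P$ for all $\tau\neq 0$. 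For the central fibre, continuity of $m_T$ and compactness give $m_T(\X_0)\subseteq P$, and the reverse inclusion follows from the preservation of the moment polytope under the flat degeneration $\X_\tau\rightsquigarrow\X_0$.

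I expect the two genuinely delicate points to be the central-fibre equality — where one must argue the polytope does not drop in the limit, rather than appeal to a fibre isomorphism (which is unavailable at $\tau=0$) — and the verification that $\Psi_*(m_T|_{\X_\tau})$ is precisely the canonically normalised moment map $\mu_{\varphi_\tau}$ of Lemma \ref{lem:moment_map_change}, i.e. that the normalisation singled out by matching equivariant classes is the same one for which $\mu_\varphi(X)=P$.
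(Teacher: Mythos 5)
Your proposal is correct in outline, but it runs the paper's argument in reverse, and two of the steps you flag as ``delicate'' genuinely need arguments that you have not supplied. The paper's route is: cite Lahdili's Lemma 7 to obtain the normalisation with \(m_T(\X_\tau)=P\) for all \(\tau\in\mathbb{C}^*\), extend to \(\tau=0,\infty\) by continuity of \(m_T\) and compactness of \(\X\), then deduce the class equality fibrewise (polytope equality \(\Rightarrow\) class equality, via Lemma \ref{lem:moment_map_change}) and promote it to all of \(X\times\mathbb{P}^1\backslash\{0\}\) by the Kunneth theorem, since \(T\) acts trivially on the contractible factor. You instead first prove \(\tau\)-independence of the fibre classes by equivariant homotopy invariance (this is the same topological content as the paper's Kunneth step), normalise \(m_T\) by matching equivariant classes, and only then deduce the polytope statement. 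What your route buys is independence from Lahdili's Lemma 7, replacing it by soft topology plus Lemma \ref{lem:moment_map_change}; what it costs is that the implication ``class equality \(\Rightarrow\) polytope equality'' is not automatic, whereas the paper's converse implication is.

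Concretely, the two points you defer are filled as follows. (i) For \(\tau\neq0\), \(\Psi_*(m_T|_{\X_\tau})\) and \(\mu_{\varphi_\tau}\) are both moment maps for \(\omega_{\varphi_\tau}\), hence differ by a constant \(c_\tau\in\mathfrak{t}^*\); your class equality gives \([c_\tau]=0\) in \(H^2_T(X;\mathbb{R})\), and to conclude \(c_\tau=0\) you must know that the constants \(\mathfrak{t}^*\to H^2_T(X;\mathbb{R})\) inject. This is true but requires an argument: a hamiltonian torus action on a compact manifold has a fixed point \(x_0\) (an extremum of \(\mu^\xi\) for generic \(\xi\)), and restriction to \(x_0\) sends \([c_\tau]\) to \(c_\tau\in H^2_T(\mathrm{pt};\mathbb{R})=\mathfrak{t}^*\). (The paper's direction avoids this: if the image is \(P\) then \(P+c=P\) forces \(c=0\).) (ii) At the central fibre, your forward inclusion \(m_T(\X_0)\subseteq P\) uses flatness (no component of \(\X\) lies over \(0\), so every point of \(\X_0\) is a limit of points in nearby fibres) together with continuity; the reverse inclusion is not a citable ``preservation under flat degeneration'' but exactly the paper's compactness argument: given \(p\in P\), choose \(x_j\in\X_{\tau_j}\) with \(\tau_j\to0\) and \(m_T(x_j)=p\), and extract a convergent subsequence in the compact space \(\X\); its limit lies in \(\X_0\) and maps to \(p\). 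Finally, a cosmetic but necessary fix: since the Cartan model is not suited to the singular space \(\X\) (as the paper notes), your pullbacks \(f_\tau^*\) and the homotopy should be phrased on the smooth open locus \(\pi^{-1}(\mathbb{P}^1\backslash\{0\})\cong X\times\mathbb{P}^1\backslash\{0\}\), where your homotopy in fact already takes its values, rather than on all of \(\X\).
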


\begin{proof}
	The result \cite[Lemma 7]{Lah19} states the equality \(m_T(\X_\tau)=P:=\mu(X)\) holds for all \(\tau\in\mathbb{C}^*\) for a suitable normalisation of \(m_T\); we note this extends to all \(\tau\in\mathbb{P}^1\) by continuity of \(m_T\) and compactness of \(\X\).\footnote{The result in \cite{Lah19} was stated for smooth test configurations, however the proof works equally well in the singular case.} For the statement on equivariant cohomology, first note for any \(\tau\in\mathbb{P}^1\backslash\{0\}\) the condition \(m_T(\X_\tau)=P:=\mu(X)\) implies the restriction \([\Omega+m_T]|_{\X_\tau}\) maps to \([\omega+\mu]\) under \(\Psi\); see \cite[Lemma 1]{Lah19}. To see this implies an equality of equivariant classes on all of \(X\times\mathbb{P}^1\backslash\{0\}\), we merely note the \(T\)-action on \(\mathbb{P}^1\backslash\{0\}\) is trivial, so there is an equality \[H^2_T(X\times\mathbb{P}^1\backslash\{0\};\mathbb{R})\cong H^2_T(X;\mathbb{R})\otimes H^0(\mathbb{P}^1\backslash\{0\})=H^2_T(X;\mathbb{R}),\] where the first isomorphism follows from the Kunneth theorem since the higher cohomology groups of \(\mathbb{P}^1\backslash\{0\}\) vanish.
\end{proof}
			
We would like to say the moment map \(m\) gives us an extension of \(\A\) to an equivariant class \(\A_{T}:=[\Omega+m_T]\in H^2_{T}(\X;\mathbb{R})\). However, the Cartan model for equivariant cohomology is not well suited to singular spaces. Instead, given an equivariant resolution of singularities \(\widetilde{\X}\to\X\), we pull back the equivariant form \(\Omega+m_T\) to \(\widetilde{\Omega}+\widetilde{m}_T\) on \(\widetilde{\X}\), which determines an equivariant cohomology class \(\widetilde{\A}_{T}:=[\widetilde{\Omega}+\widetilde{m}_T]\in H^2_{T}(\widetilde{\X};\mathbb{R})\) extending the pulled back class \(\widetilde{\A}\). We thus abuse notation by writing \[\mathcal{A}_{T\times S^1}:=[\Omega+m]\in H^2_{T\times S^1}(\X;\mathbb{R}),\quad\quad\mathcal{A}_T:=[\Omega+m_T]\in H^2_{T}(\X;\mathbb{R}),\] even though we only define the classes \[\widetilde{\mathcal{A}}_{T\times S^1}:=[\widetilde{\Omega}+\widetilde{m}]\in H^2_{T\times S^1}(\widetilde{\X};\mathbb{R}),\quad\quad\widetilde{A}_{T}:=[\widetilde{\Omega}+\widetilde{m}_T]\in H^2_{T}(\widetilde{\X};\mathbb{R}).\] In particular, \(T\)-equivariant test configurations will henceforth be denoted \((\X,\A_T)\), and we will often call these simply ``test configurations", taking the \(T\)-action as implicit in the notation.

\begin{remark}
	The class \(\widetilde{\A}_T\) on the resolution is independent of the form \(\Omega\) and moment map \(m\), which follows from the choice of normalisation in Lemma \ref{lem:moment_map_normalisation}.
\end{remark}
	
We will need the following generalisation of \cite[Lemma 7]{Lah19}; the proof follows largely the same approach:

\begin{lemma}\label{lem:independence}
	Let \((\X,\A_T)\) be a test configuration for \((X,\alpha_T)\), with \(\Omega+m_T\) an equivariant representative for \(\mathcal{A}_T\). Let \(h:P\to\mathbb{R}\) be an arbitrary smooth function on the moment polytope. Then for all \(\tau\in\mathbb{P}^1\backslash\{0\}\), \[\int_{\X_\tau}h(m_\tau)\Omega_\tau^n=\int_{\X_1}h(m_1)\Omega_1^n=\int_Xh(\mu)\omega^n,\] where \(\Omega_\tau\) (resp. \(m_\tau\)) denotes the restriction of \(\Omega\) (resp. \(m_T\)) to \(\X_\tau\). Furthermore,  for all \(\tau\in\mathbb{P}^1\backslash\{0\}\) and \(\xi\in\mathfrak{t}\), \[\int_{\X_\tau}S_v(\Omega_\tau)\ell_\xi(m_\tau)\Omega_\tau^n=\int_{\X_1}S_v(\Omega_1)\ell_\xi(m_1)\Omega_1^n=\int_XS_v(\omega)\ell_\xi(\mu)\omega^n.\]
\end{lemma}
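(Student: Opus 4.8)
\section*{Proof proposal}

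The plan is to reduce both identities to invariance statements for integrals over $X$ under change of $T$-invariant K\"ahler potential, and then to establish those invariance statements. For $\tau\in\mathbb{C}^*$ let $g_\tau$ denote the action of $\tau\in\mathbb{C}^*\subset T^{\mathbb{C}}\times\mathbb{C}^*$ on $\X$; since the $\mathbb{C}^*$-factor covers scalar multiplication on $\mathbb{P}^1$, $g_\tau$ restricts to a biholomorphism $\X_1\to\X_\tau$, and composing with $\Psi$ at $\tau=1$ yields a $T$-equivariant biholomorphism $F_\tau\colon X\to\X_\tau$. Set $\omega_\tau:=F_\tau^*\Omega_\tau$ and $\mu_\tau:=F_\tau^*m_\tau$. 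Because $g_\tau$ preserves $\A$ and $\Psi$ identifies $\A$ with $p^*\alpha$ via \eqref{eq:general_fibre}, one checks $[\omega_\tau]=\alpha$, so $\omega_\tau=\omega_{\varphi_\tau}$ for a unique $\varphi_\tau\in\mathcal{H}^T$. Moreover $\mu_\tau$ is a $T$-moment map for $\omega_\tau$ by naturality under the equivariant map $F_\tau$, with image $m_T(\X_\tau)=P$ by Lemma \ref{lem:moment_map_normalisation}; since two moment maps differ by a constant and $P$ is bounded, $\mu_\tau=\mu_{\varphi_\tau}$. As $S_v$ is built $T$-equivariantly from the metric and moment map, it too is natural under $F_\tau$, so the change of variables $F_\tau$ turns the fibre integrals into $\int_X h(\mu_{\varphi_\tau})\,\omega_{\varphi_\tau}^n$ and $\int_X S_v(\omega_{\varphi_\tau})\,\ell_\xi(\mu_{\varphi_\tau})\,\omega_{\varphi_\tau}^n$. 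The fibre $\tau=\infty$ is handled identically using $\Psi$ directly, with no $\mathbb{C}^*$-translate needed. It then suffices to show these two integrals are independent of $\varphi\in\mathcal{H}^T$; as $\mathcal{H}^T$ is convex, hence connected, I would do this by showing their derivative along any smooth path $\varphi_t$ vanishes, and evaluating at $\varphi=0$ to identify the common value with $\int_X h(\mu)\,\omega^n$ and $\int_X S_v(\omega)\,\ell_\xi(\mu)\,\omega^n$.

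For the first integral this is the Duistermaat--Heckman invariance of the pushforward measure $(\mu_\varphi)_*\omega_\varphi^n$, i.e.\ precisely the argument proving (1) of Lemma \ref{lem:const_cvw} with the weight $v$ replaced by the arbitrary smooth $h$. With $\dot\varphi_t=\psi$ one differentiates $\int_X h(\mu_t)\,\omega_t^n$ using $\tfrac{d}{dt}\omega_t^n=n\,i\d\db\psi\wedge\omega_t^{n-1}$ and $\tfrac{d}{dt}\mu_t^a=d^c\psi(\xi_a)$, integrates the Laplacian term by parts against the closed form $\omega_t^{n-1}$, and uses $d\mu_t^a=-\omega_t(\xi_a,-)$ to see the first-order terms cancel, giving derivative zero.

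The second integral is the main obstacle, since $S_v(\omega_\varphi)\,\omega_\varphi^n$ is not of the form $h(\mu_\varphi)\,\omega_\varphi^n$. I would follow the bookkeeping behind (2) and (3) of Lemma \ref{lem:const_cvw}, now carrying the extra factor $\mu_\varphi^\xi=\ell_\xi(\mu_\varphi)$. Expanding $S_v(\omega_\varphi)\,\omega_\varphi^n$ into its three defining terms, the curvature term contributes $n\int_X v(\mu_\varphi)\,\mu_\varphi^\xi\,\Ric(\omega_\varphi)\wedge\omega_\varphi^{n-1}$; integrating the Laplacian term by parts and invoking Remark \ref{rem:Ricci_moment_map} (that $-\Delta_\varphi\mu_\varphi$ is a moment map for $\Ric(\omega_\varphi)$) rewrites it through the Ricci moment map. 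The key structural fact is that $\Ric(\omega_\varphi)+(-\Delta_\varphi\mu_\varphi)$ and $\omega_\varphi+\mu_\varphi$ are equivariantly closed representatives of $\varphi$-independent equivariant classes (the equivariant first Chern class and $\alpha_T$); combined with the Duistermaat--Heckman invariance above applied to the Hessian term, the whole expression reassembles as the integral of a fixed equivariantly closed form weighted by $v$ and $\ell_\xi$, and is therefore $\varphi$-independent. The delicate point, and where this goes beyond Lemma \ref{lem:const_cvw}, is that $\mu_\varphi^\xi$ is not closed, with $d\mu_\varphi^\xi=-\omega_\varphi(\xi,-)$, so each integration by parts produces extra terms that must be tracked and shown to cancel.

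Two alternative routes serve as sanity checks. One may differentiate $\int_X S_v(\omega_t)\,\mu_t^\xi\,\omega_t^n$ directly and invoke Lahdili's formula for the linearization of $S_v$ \cite{Lah19}: that operator is formally self-adjoint with respect to a weighted measure and annihilates holomorphy potentials, so since $\xi\in\mathfrak{t}$ is holomorphic Killing, $\mu^\xi$ lies in its kernel, the leading fourth-order term drops after one integration by parts, and the lower-order terms cancel against the variations of $\mu_t^\xi$ and $\omega_t^n$. Alternatively one can write $\int_X S_v(\omega_\varphi)\,\mu_\varphi^\xi\,\omega_\varphi^n=\mathcal{F}_{v,w}(\xi)+\hat{S}_{v,w}\int_X w(\mu_\varphi)\,\ell_\xi(\mu_\varphi)\,\omega_\varphi^n$, whose first summand is the weighted Futaki invariant (metric-independent by \cite{Lah19}) and whose second summand is $\varphi$-independent by the Duistermaat--Heckman argument. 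In every route the substantive content is the same cancellation forced by $\xi$ being Killing, and carrying it out in the presence of the non-closed factor $\mu^\xi$ is the crux of the proof.
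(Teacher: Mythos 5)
Your proposal is correct, and its skeleton is the same as the paper's: restrict to \(\tau\in\mathbb{C}^*\), pull the fibre integrals back to \(\X_1\cong X\) along the \(\mathbb{C}^*\)-action so that everything becomes a statement about metrics \(\omega_{\varphi_\tau}\in\alpha\) with normalised moment maps \(\mu_{\varphi_\tau}\), then show the integrals do not move as the potential varies (the paper handles \(\tau=\infty\) by continuity rather than via \(\Psi\), and varies only along the specific ray generated by the \(\mathbb{C}^*\)-action rather than over all of \(\mathcal{H}^T\); both variants are fine). For the first identity your Duistermaat--Heckman computation is exactly the paper's. For the second identity, the paper's argument is literally your \emph{first alternative route}: it differentiates \(\int_{\X_1}S_v(\omega_\tau)\ell_\xi(\mu_\tau)\omega_\tau^n\) in \(t=-\log|\tau|\), invokes Lahdili's linearisation formula \(\tfrac{d}{dt}S_v(\omega_\tau)=-\mathcal{D}_\tau^*(v(\mu_\tau)\mathcal{D}_\tau\dot{\varphi}_t)+\tfrac12(dS_v(\omega_\tau),d\dot{\varphi}_t)\) from \cite[Lemma B.1]{Lah19}, kills the fourth-order term because \(\ell_\xi(\mu_\tau)\in\ker\mathcal{D}_\tau\), and cancels the remaining terms by parts against the variations of \(\mu_\tau^\xi\) and \(\omega_\tau^n\) using \(\dot{\varphi}_t=-2\lambda(e^{-t})^*m_{S^1}\) --- precisely the mechanism you describe. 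By contrast, the route you put forward as your main argument (reassembling the integrand from the equivariantly closed forms \(\Ric(\omega_\varphi)-\Delta_\varphi\mu_\varphi\) and \(\omega_\varphi+\mu_\varphi\)) is the weakest of your three: for a merely smooth weight, the claim that an integral of equivariantly closed forms ``weighted by \(v\) and \(\ell_\xi\)'' is representative-independent is not a formal consequence of equivariant closedness --- it is exactly the statement whose proof is the tracked integration by parts you defer, so on its own this route would leave a gap. Your second alternative (writing the integral, up to sign conventions, as \(\hat{S}_{v,w}\int_X\ell_\xi(\mu)w(\mu)\omega^n - n!\,F_{v,w}(\xi)\), with the first term handled by Duistermaat--Heckman invariance and the second metric-independent by \cite{Lah19}) is a genuinely different, clean and non-circular argument, shorter than the paper's; what it buys is brevity, at the cost of outsourcing the substantive cancellation to Lahdili's proof of Futaki-invariant independence rather than performing it on the test configuration as the paper does.
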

	
\begin{proof}
	It suffices to prove these equalities for \(\tau\in\mathbb{C}^*\); the equalities at \(\tau=\infty\) will follow from continuity. Denote by \(\lambda\) the \(\mathbb{C}^*\)-action on the test configuration. For \(\tau\in\mathbb{C}^*\) define \(\omega_\tau:=\lambda(\tau)^*\Omega_\tau\) and \(\mu_\tau:=\lambda(\tau)^*m_\tau\), so that \(\mu_\tau\) is a moment map for \(\omega_\tau\) on \(\X_1\cong X\). Our first goal is then to show that \[\int_{\X_1}h(\mu_\tau)\omega_\tau^n\] is independent of \(\tau\). Let \(t:=-\log|\tau|\); by circle invariance it suffices to show the derivative in \(t\) of the integral vanishes. Denote by \(V\) the infinitesimal generator of the circle action on \(\X\). Then \begin{align*}
		\frac{d}{dt}\mu_\tau^\xi &= \frac{d}{dt}(\lambda(e^{-t})^*m_\tau^\xi) \\
		&=\lambda(e^{-t})^*\mathcal{L}_{\mathcal{J}V}m_T^\xi \\
		&=\lambda(e^{-t})^*dm_T^\xi(\mathcal{J}V) \\
		&=-\lambda(e^{-t})^*(dm_T^\xi,dm_{S^1})_{\Omega} \\
		&=\lambda(e^{-t})^*dm_{S^1}(\mathcal{J}\xi) \\
		&=-\lambda(e^{-t})^*((d_{\mathcal{V}}m_{S^1})^{\#}, \xi)_{\Omega_\tau},
	\end{align*} where \((-,-)_{\Omega}\) is the Hermitian inner product determined by \(\Omega\), \(d_{\mathcal{V}}\) denotes the vertical (fibrewise) derivative, and \(\#\) is conversion of a 1-form to a vector field on \(\X_\tau\) via \(\Omega_\tau\).
	Similarly, using (1), (3) and (4) of Lemma \ref{lem:hamiltonians}, \[\frac{d}{dt}\omega_\tau = -2\lambda(e^{-t})^*(i\d\db m_{S^1}|_{\X_\tau}).\] It follows that \begin{align*}
		\frac{d}{dt}\int_{\X_1}h(\mu_\tau)\omega_\tau^n =&
		-\sum_a\int_{\X_1}h_{,a}(\mu_\tau)\lambda(e^{-t})^*((d_{\mathcal{V}}m_{S^1})^{\#}, \xi_a)_{\Omega_\tau}\lambda(e^{-t})^*\Omega_\tau^n \\
		&-2\int_{\X_1}h(\mu_\tau)\lambda(e^{-t})^*\Delta_{\Omega_\tau}(m_{S^1})\lambda(e^{-t})^*\Omega_\tau^n \\
		=& -\sum_a\int_{\X_\tau}h_{,a}(m_\tau)((d_{\mathcal{V}}m_{S^1})^{\#}, \xi_a)_{\Omega_\tau}\Omega_\tau^n \\
		&+ \sum_a\int_{\X_\tau}h_{,a}(m_\tau)((d_{\mathcal{V}}m_{S^1})^{\#}, \xi_a)_{\Omega_\tau}\Omega_\tau^n \\
		=&0.
	\end{align*} Here \(\{\xi_a\}\) is a basis for \(\mathfrak{t}\), and \(h_{,a}\) denotes the partial derivative of \(h\) in the \(\xi^a\)-direction, where \(\{\xi^a\}\) is the dual basis for \(\mathfrak{t}^*\). The equality \[\int_{\X_1}h(m_1)\Omega_1^n=\int_Xh(\mu)\omega^n\] then follows from the general fact that such integrals are independent of the choice of representative for the equivariant class \([\omega+\mu]\), which can easily be seen by differentiating along a straight line path between two representatives.
	
	We now consider the integrals involving the \(v\)-weighted scalar curvature; our aim is to show that the derivative of \[\int_{\X_1}S_v(\omega_\tau)\ell_\xi(\mu_\tau)\omega_\tau^n\] in \(t\) vanishes. Lahdili computes the derivative of \(S_v(\omega_\tau)\) in \(t\) as \[\frac{d}{dt}S_v(\omega_\tau)=-\mathcal{D}_\tau^*(v(\mu_\tau)\mathcal{D}_\tau\dot{\varphi}_t)+\frac{1}{2}(dS_v(\omega_\tau),d\dot{\varphi}_t),\] where \(\mathcal{D}_\tau:=\db\nabla^{1,0}_{\omega_\tau}\); see \cite[Lemma B.1]{Lah19}. Here \(\mathcal{D}_\tau^*\) refers to the formal \(L^2\)-adjoint with respect to the measure \(\omega^n\). From this we have \begin{align*}
		\frac{d}{dt}\int_{\X_1}S_v(\omega_\tau)\ell_\xi(\mu_\tau)\omega_\tau ^n 
		=&-\int_{\X_1}\mathcal{D}_\tau^*(v(m_\tau)\mathcal{D}_\tau\dot{\varphi}_t)\ell_\xi(\mu_\tau)\omega_\tau^n \\
		&+\frac{1}{2}\int_{\X_1}(dS_v(\omega_\tau),d\dot{\varphi}_t)\ell_\xi(\mu_\tau)\omega_\tau^n \\
		&-\int_{\X_1}S_v(\omega_\tau)\lambda(e^{-t})^*((d_{\mathcal{V}}m_{S^1})^{\#}, \xi))_{\Omega_\tau} \omega_\tau^n \\
		&+\int_{\X_1}S_v(\omega_\tau)\ell_\xi(\mu_\tau)\Delta_{\omega_\tau}(\dot{\varphi}_t)\omega_\tau^n \\
		=&\,\,0.
	\end{align*} Here the first line vanishes since \(\ell_\xi(\mu_\tau)\) is in the kernel of \(\mathcal{D}_\tau\), and the remaining three lines cancel via integration by parts similarly to above using that \(\dot{\varphi}_t=-2\lambda(e^{-t})^*m_{S^1}\). The final equality \[\int_{\X_1}S_v(\Omega_1)\ell_\xi(m_1)\Omega_1^n=\int_XS_v(\omega)\ell_\xi(\mu)\omega^n\] is another straightforward exercise in differentiating along a straight line path between equivariant representatives.
\end{proof}
		
\subsection{Weighted K-stability}
	
Here we review the definition of weighted K-(semi/poly)stability. This concept was initially introduced by Lahdili \cite{Lah19} in the case where the test configuration is smooth and K{\"a}hler. Later, Inoue \cite{Ino20} introduced weighted K-stability for arbitrary singular test configurations, and this is the theory we will use here. We assume the reader is familiar with equivariant cohomology and the de Rham representation of this theory, as well as equivariant locally finite homology and its duality with equivariant cohomology; see the appendix of \cite{Ino20} for all the necessary details.
		
Let \(Y\) be a \(T\)-equivariant complex manifold of dimension \(d\); later we will either take \(Y\) to be the K{\"a}hler manifold \(X\), an equivariant resolution \(\widetilde{\X}\) of a test configuration \(\X\) for \(X\), or the restriction \(\widetilde{\X}|_{\mathbb{C}}\) of such a resolution. In particular we do not require \(Y\) to be compact, for which reason we use locally finite homology in what follows. The torus \(T\) will either be \(T\) from previous sections, or we may later take \(T\times S^1\) for the action on a test configuration.
		
The map \(\pi:Y\to\mathrm{pt}\) is \(T\)-equivariant, so induces a pushforward map on equivariant locally finite homology: \[\pi_*:H_*^{\mathrm{lf},T}(Y;\mathbb{R})\to H_*^{\mathrm{lf},T}(\mathrm{pt};\mathbb{R}).\] Since \(Y\) and \(\mathrm{pt}\) are smooth, there are Poincar{\'e} duality isomorphisms in the equivariant theory: \[H^{\mathrm{lf},T}_{2k}(Y;\mathbb{R})\cong H_T^{2d-2k}(Y;\mathbb{R}),\] \[H^{\mathrm{lf},T}_{2k}(\mathrm{pt};\mathbb{R})\cong H_T^{-2k}(\mathrm{pt};\mathbb{R}).\] Using these isomorphisms, the map \(\pi_*\) on equivariant homology induces an integration map on equivariant cohomology, which we denote by the same symbol: \[\pi_*:H^{2k}_T(Y;\mathbb{R})\to H^{2k-2d}_T(\mathrm{pt};\mathbb{R}).\] The term ``integration map" is justified; indeed, if \(Y\) is compact and we take an equivariant form \(u\) on \(Y\) representing a class \([u]\in H^{2k}_T(Y;\mathbb{R})\), then \(\int_Y u\) represents \(\pi_*[u]\in H^{2k-2d}_T(\mathrm{pt};\mathbb{R})\). It will be convenient to reserve the notation \(\pi\) for test configurations, so we will instead denote \(\pi_*(\alpha)\) by just \((\alpha)\). Unlike in the non-equivariant case, we can have \((\alpha)\neq0\) even when \(\deg\alpha> 2d\). 
		
The following special case of the projection formula is derived from the observation that to integrate a smooth differential form, it suffices to integrate the form over a dense open subset. This will later be applied to show that the weighted Donaldson--Futaki invariant is well-defined.
		
\begin{lemma}[Projection formula]\label{lem:projection_formula}
	Let \(p:\widetilde{Y}\to Y\) be a proper \(T\)-equivariant morphism of \(T\)-equivariant complex manifolds, and assume there are \(T\)-invariant dense open subsets \(U\subset\widetilde{Y}\) and \(V\subset Y\) such that \(p(U)\subset V\) and \(p:U\to V\) is a biholomorphism. Let \(\beta\in H^{2k}_T(Y;\mathbb{R})\). Then \[(\beta)=(p^*\beta),\] where the former is an integral over \(Y\) and the latter an integral over \(\widetilde{Y}\).
\end{lemma}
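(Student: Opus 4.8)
The plan is to push the equivariant pushforward $(\cdot)=\pi_*$ down to an honest integration of differential forms, and then exploit that $p$ is a biholomorphism on dense opens. Using the de Rham representation of equivariant cohomology, I would choose a $T$-equivariantly closed form $\hat\beta$ (a polynomial in the equivariant parameters with ordinary differential-form coefficients) representing $\beta$; then $p^*\hat\beta$ is an equivariant representative for $p^*\beta$. Under the Poincar\'e duality isomorphisms, the integration map $\pi_*$ is computed by integrating the top ordinary-degree component of such a representative, so that $(\beta)$ is the integral of $\hat\beta$ over $Y$ and $(p^*\beta)$ the integral of $p^*\hat\beta$ over $\widetilde Y$, both valued in $H^*_T(\mathrm{pt};\mathbb{R})=\mathrm{Sym}(\mathfrak{t}^*)$, applied coefficient by coefficient in the equivariant parameters. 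Note $\widetilde Y$ and $Y$ have the same dimension, since $p$ is a biholomorphism on dense opens.

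First I would observe that these integrals only see a dense open set. In the cases of interest the complements $Y\setminus V$ and $\widetilde Y\setminus U$ are contained in proper analytic subsets (of real codimension $\ge 2$), hence are Lebesgue-null; since $\hat\beta$ and $p^*\hat\beta$ are smooth, each of the ordinary integrals defining the polynomial coefficients is unchanged upon restricting the domain, giving $\int_Y\hat\beta=\int_V\hat\beta$ and $\int_{\widetilde Y}p^*\hat\beta=\int_U p^*\hat\beta$. Next, since $p|_U\colon U\to V$ is a biholomorphism it is an orientation-preserving diffeomorphism (holomorphic maps respect the canonical orientation) and $(p^*\hat\beta)|_U=(p|_U)^*(\hat\beta|_V)$; the change-of-variables formula then yields $\int_U (p|_U)^*(\hat\beta|_V)=\int_V \hat\beta|_V$ termwise. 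Combining, $(p^*\beta)=\int_U p^*\hat\beta=\int_V\hat\beta=(\beta)$.

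The step requiring the most care is the reduction to the dense open subset. I must ensure both that equivariant integration in the locally finite homology framework is genuinely computed by integrating a smooth equivariant representative (so the measure-zero argument applies to each polynomial coefficient), and that the complements $\widetilde Y\setminus U$ and $Y\setminus V$ really are negligible — which is why it matters that in our applications $p$ arises as a resolution or blow-up with analytic exceptional locus, rather than as a map that is merely a biholomorphism over some topologically dense open set.

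Equivalently, one can repackage the argument homologically to make the birationality input explicit and sidestep measure theory: properness of $p$ provides a pushforward $p_*$ on equivariant locally finite homology, and the hypothesis that $p$ is a biholomorphism on dense opens forces $p_*[\widetilde Y]=[Y]$ (the map is birational of degree one). The claim then reduces to the genuine projection formula $p_*(p^*\beta\cap[\widetilde Y])=\beta\cap p_*[\widetilde Y]$ combined with $\pi_{\widetilde Y}=\pi_Y\circ p$, which is exactly the cap-product adjunction between $p^*$ and $p_*$. I would present the form-theoretic version as the main proof, since it matches the intended ``integrate over a dense open'' observation, and note this homological reformulation as the conceptual reason the constant is $1$.
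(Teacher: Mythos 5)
Your proposal is correct and takes essentially the same approach as the paper: the paper gives no formal proof of this lemma, justifying it only by the preceding remark that integrating a smooth (equivariant) differential form is unaffected by restricting to a dense open subset, which is exactly your form-theoretic argument via change of variables under the biholomorphism \(p\colon U\to V\). Your added caveat that the complements must actually be Lebesgue-null (true in the applications, where they are proper analytic subsets such as exceptional loci of resolutions), and the alternative homological formulation via \(p_*[\widetilde Y]=[Y]\) and the cap-product projection formula, are consistent refinements of the paper's one-line justification rather than a different route.
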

		
Let \(h(x) = \sum_{k=0}^\infty\frac{a_k}{k!}x^k\) be a formal power series with coefficients \(a_k\in\mathbb{R}\). Given \(\beta\in H^2_T(Y;\mathbb{R})\), we define \[h(\beta):=\sum_{k=0}^\infty\frac{a_k}{k!}\beta^k\in \hat{H}^{*,\,\mathrm{even}}_T(Y,\mathbb{R})\] in the completion of the even-degree equivariant cohomology. The integration map \(\pi_*\) on cohomology then induces \[\pi_*:\hat{H}_T^{*,\,\mathrm{even}}(Y;\mathbb{R})\to\hat{H}_T^{*-2d,\,\mathrm{even}}(\mathrm{pt};\mathbb{R}).\] Given an element \(\zeta\in \hat{H}_T^{*,\,\mathrm{even}}(Y;\mathbb{R})\), we similarly write \((\zeta)\in \hat{H}_T^{*,\,\mathrm{even}}(\mathrm{pt};\mathbb{R})\) in place of \(\pi_*(\zeta)\). In particular, we have \((h(\beta))\in \hat{H}_T^{*,\,\mathrm{even}}(\mathrm{pt};\mathbb{R})\), and if \(\rho\in H^2_T(Y;\mathbb{R})\) is an auxiliary class, we also have \((\rho\,h(\beta))\in \hat{H}_T^{*,\,\mathrm{even}}(\mathrm{pt};\mathbb{R})\). The Chern--Weil theorem furnishes an isomorphism \[\hat{H}^{*,\mathrm{even}}_{T}(\mathrm{pt};\mathbb{R})\cong\mathbb{R}\llbracket\mathfrak{t}\rrbracket,\] where \(\mathfrak{t}\) is the Lie algebra of \(\mathfrak{t}\) and homogeneous polynomials on \(\mathfrak{t}\) of degree \(k\) are assigned degree \(2k\) in the ring. It follows we can identify \((\zeta)\) with a formal power series on the Lie algebra \(\mathfrak{t}\); we shall do this from now on. 

Let \((X,\alpha_T)\) be a \(T\)-equivariant compact K{\"a}hler manifold, and let \((\X,\mathcal{A}_T)\) be a test configuration for \((X,\alpha_T)\). We take \((\widetilde{\X},\widetilde{\A}_T)\) a \(T^{\mathbb{C}}\times \mathbb{C}^*\)-equivariant resolution of singularities of \((\X,\A_T)\). Recall that we have a moment map \(m\) for the \(T\times S^1\)-action on \(\X\) that is compatible with \(\mu\), in the sense that \(m(\X_\tau)=P:=\mu(X)\) for all \(\tau\in\mathbb{P}^1\), and \(\widetilde{\A}_T:=[\widetilde{\Omega}+\widetilde{m}_T]\). Define \(c_1(\widetilde{\X}/\mathbb{P}^1)_T:=c_1(\widetilde{\X})_T-c_1(\mathbb{P}^1)_T\), where the class \(c_1(\mathbb{P}^1)_T\) is tacitly pulled back from \(\mathbb{P}^1\) to \(\widetilde{\X}\). We note that the first Chern classes here arise from line bundles (namely the anticanonical bundles) which have canonical \(T\)-actions, so are automatically extended to equivariant cohomology.

Given all this data, we can define the equivariant intersections \[(c_1(\widetilde{\X}/\mathbb{P}^1)_Th(\widetilde{\A}_T)),\: (h(\widetilde{\A}_T)),\: (c_1(X)_Th(\alpha_T)),\:(h(\alpha_T)),\] all of which take values in \(\mathbb{R}\llbracket\mathfrak{t}\rrbracket\). It is a natural question to ask whether these power series converge. The following result of Inoue addresses this, and is the essential ingredient in defining the weighted Donaldson--Futaki invariant of a \(T\)-equivariant K{\"a}hler test configuration.
		
\begin{proposition}[{\cite[Proposition 3.8]{Ino20}}]\label{prop:convergence}
	Let \(\xi\in\mathfrak{t}\), and let \(h\) be a real analytic function on the subset \(\langle P,\xi\rangle\subset\mathbb{R}\), where \(P\subset\mathfrak{t}^*\) is the moment polytope of \((X,\alpha_T)\). As in Assumption \ref{ass:weights}, we assume the interval \(\langle P,\xi\rangle\) has \(0\) as its midpoint, so \(h\) has a convergent power series expansion about \(0\). Then the power series \[(c_1(\widetilde{\X}/\mathbb{P}^1)_Th(\widetilde{\A}_T)),\: (h(\widetilde{\A}_T)),\: (c_1(X)_Th(\alpha_T)),\:(h(\alpha_T))\in\mathbb{R}\llbracket\mathfrak{t}\rrbracket\] each converge absolutely on a neighbourhood of \(\xi\in\mathfrak{t}\). Furthermore, the first two are independent of the choice of equivariant resolution of singularities. Lastly, if \(h^{(n)}>0\) (resp. \(h^{(n+1)}>0\)) then \((h(\alpha_T))(\xi)>0\) (resp. \((h(\A_T))(\xi)>0\)).
\end{proposition}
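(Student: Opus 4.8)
The plan is to evaluate each equivariant integral at a point $\zeta\in\mathfrak{t}$ through the de Rham model, reducing it to an ordinary integral of a derivative of $h$ composed with the relevant moment map, and then to exploit that the moment image is the \emph{compact} polytope $P$. Concretely, writing $h(x)=\sum_k \tfrac{a_k}{k!}x^k$ and representing $\alpha_T$ by $\omega+\mu$, only the top-degree part survives integration, so
\begin{equation*}
(h(\alpha_T))(\zeta)=\sum_{k\ge n}\frac{a_k}{n!\,(k-n)!}\int_X(\mu^\zeta)^{k-n}\omega^n=\frac{1}{n!}\int_X h^{(n)}(\mu^\zeta)\,\omega^n,
\end{equation*}
where $\mu^\zeta=\ell_\zeta(\mu)$; similarly, using the pullback representative $\widetilde{\Omega}+\widetilde{m}_T$ on the $(n{+}1)$-dimensional $\widetilde{\X}$,
\begin{equation*}
(h(\widetilde{\A}_T))(\zeta)=\frac{1}{(n+1)!}\int_{\widetilde{\X}}h^{(n+1)}(\widetilde{m}_T^{\,\zeta})\,\widetilde{\Omega}^{\,n+1}.
\end{equation*}
The two Chern-class integrals are handled the same way after representing $c_1(X)_T$ (resp.\ $c_1(\widetilde{\X}/\mathbb{P}^1)_T$) by an equivariant form $\rho+\mu_\rho$: the degree-$2n$ (resp.\ degree-$2n{+}2$) part splits into a $\rho\wedge\omega^{n-1}$-piece weighted by $h^{(n-1)}$ and a $\mu_\rho$-piece weighted by $h^{(n)}$ (resp.\ the analogous $h^{(n)}$ and $h^{(n+1)}$ pieces).

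First I would prove convergence. The function $\mu^\zeta$ takes values in $\langle P,\zeta\rangle$, and by Lemma \ref{lem:moment_map_normalisation} the same is true of $\widetilde{m}_T^{\,\zeta}$, whose image is again $P$; in particular $|\mu^\zeta|,|\widetilde{m}_T^{\,\zeta}|\le R_\zeta:=\max_{p\in P}|\langle p,\zeta\rangle|$, while the auxiliary forms $\rho,\mu_\rho$ are bounded since they are smooth on compact manifolds. By Assumption \ref{ass:weights} the power series of $h$ about $0$ converges on a neighbourhood of the compact interval $\langle P,\xi\rangle$, so its radius of convergence $\rho_0$ satisfies $\rho_0>R_\xi$, and the same $\rho_0$ governs every derivative $h^{(j)}$. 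Grading the series by homogeneous degree $m$ in $\zeta$ and bounding the degree-$m$ term of $(h(\alpha_T))(\zeta)$ by $\tfrac{|a_{m+n}|}{n!\,m!}R_\zeta^{\,m}\,\mathrm{Vol}(X)$, absolute convergence follows whenever $R_\zeta<\rho_0$, which is an open condition on $\zeta$ containing $\xi$; the remaining three series are dominated identically. This gives absolute convergence on the neighbourhood $\{R_\zeta<\rho_0\}$ of $\xi$.

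For positivity I would simply read off the displayed formulas at $\zeta=\xi$. Since $\mu^\xi$ and $\widetilde{m}_T^{\,\xi}$ take values in $\langle P,\xi\rangle$, the hypothesis $h^{(n)}>0$ (resp.\ $h^{(n+1)}>0$) makes the integrand $h^{(n)}(\mu^\xi)$ (resp.\ $h^{(n+1)}(\widetilde{m}_T^{\,\xi})$) strictly positive there. As $\omega^n>0$ on the K{\"a}hler manifold $X$, and $\widetilde{\Omega}^{\,n+1}=(q^*\Omega)^{n+1}$ (with $q\colon\widetilde{\X}\to\X$ the resolution) is strictly positive on the dense preimage of $\X_{\mathrm{reg}}$, both integrals are strictly positive, yielding $(h(\alpha_T))(\xi)>0$ and $(h(\widetilde{\A}_T))(\xi)>0$.

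The main obstacle is independence of the two $\widetilde{\X}$-integrals under change of resolution. Given two equivariant resolutions, I would dominate them by a common third resolution $\widetilde{\X}_3$ with proper birational maps $p_i\colon\widetilde{\X}_3\to\widetilde{\X}_i$. Because $\widetilde{\A}_T$ is defined by pulling back $\Omega+m_T$ from $\X$, one has $p_i^*\widetilde{\A}_T=\widetilde{\A}_T$ on $\widetilde{\X}_3$, so the projection formula (Lemma \ref{lem:projection_formula}) gives $(h(\widetilde{\A}_T))_{\widetilde{\X}_i}=(h(\widetilde{\A}_T))_{\widetilde{\X}_3}$ at once. For the Chern-class integral, writing $p_i^*c_1(\widetilde{\X}_i/\mathbb{P}^1)_T=c_1(\widetilde{\X}_3/\mathbb{P}^1)_T+[R_{p_i}]_T$ with $R_{p_i}$ the effective, $p_i$-exceptional relative canonical divisor, the projection formula reduces the claim to showing $([R_{p_i}]_T\,h(\widetilde{\A}_T))=0$. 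This is the delicate point, and I expect to settle it by a dimension count: each component $E$ of $R_{p_i}$ is $p_i$-exceptional, so $p_i(E)$ has complex dimension $\le n-1$, and the pairing reduces to integrating over the $n$-dimensional $E$ the restriction of $h(\widetilde{\A}_T)$; but $\widetilde{\A}_T|_E$ is pulled back from $p_i(E)$, so the relevant top-degree part on $E$ is the pullback of a form of degree exceeding $2\dim p_i(E)$, hence vanishes. With the discrepancies killed, both Chern-class integrals agree with the one computed on $\widetilde{\X}_3$, establishing independence.
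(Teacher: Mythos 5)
Your proposal is correct and follows essentially the same route as the paper's proof: reduce each equivariant pushforward to an integral of \(h^{(n)}\) or \(h^{(n+1)}\) composed with the moment map (whose image is the compact polytope \(P\)), deduce absolute convergence near \(\xi\) and positivity directly from these formulas, and prove resolution-independence via a common dominating resolution plus the projection formula. Your packaging of the Chern-class discrepancy as the relative canonical divisor \(R_{p_i}\), killed by the dimension count on exceptional divisors, is just a more explicit phrasing of the paper's observation that the difference of the relative first Chern classes is a sum of equivariant exceptional divisors whose intersections with powers of \(\widetilde{\A}_T\) vanish.
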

		
In \cite[Proposition 3.8]{Ino20}, the function \(h\) is assumed to be entire on \(\mathbb{R}\), however the proof carries over exactly the same in the case described here. To see this, we will briefly describe how the proof works and how to adapt it to this setting.
		
\begin{proof}
	Write \(h(x) = \sum_{k=0}^\infty\frac{a_k}{k!}x^k\). For \(\eta\in\mathfrak{t}\) and \(N\geq n+1\), \begin{align*}
		\sum_{k=0}^N \frac{a_k}{k!} (\widetilde{\A}_T^k) (\eta)
		&=\sum_{k=0}^N \frac{a_k}{k!} \int_{\widetilde{\X}} (\widetilde{\Omega} + \langle \widetilde{m}_T, \eta \rangle)^k \\
		&= \int_{\widetilde{\X}} \sum_{k=0}^N \frac{a_k}{k!} (\widetilde{\Omega} + \langle \widetilde{m}_T, \eta \rangle)^k \\
		&= \frac{1}{(n+1)!} \int_{\widetilde{\X}} \left( \sum_{k=0}^{N-n-1} \frac{a_{k+n+1}}{k!} \langle \widetilde{m}_T, \eta \rangle^k \right) \widetilde{\Omega}^{n+1}.
	\end{align*} Since \(h\) is absolutely convergent on a neighbourhood of \(\langle P,\xi\rangle=\langle\widetilde{m}_T(\widetilde{\X}),\xi\rangle\), the sum \(\sum_{k=0}^\infty\frac{a_{k+n+1}}{k!}\langle\widetilde{m}_T,\eta\rangle^k\) converges absolutely and uniformly for all \(\eta\) in a neighbourhood of \(\xi\). Therefore, sending \(N\to\infty\) we may pass the limit through the integral to get \[\sum_{k=0}^\infty\frac{a_k}{k!}(\widetilde{\A}_T^k)(\eta)=\frac{1}{n!}\int_{\widetilde{\X}}h^{(n+1)}(\langle \widetilde{m},\eta\rangle)\widetilde{\Omega}^{n+1},\] and the convergence of the sum is absolute. From this formula, we also see the positivity of \((h(\widetilde{\A}_T))(\eta)\), given positivity of \(h^{(n+1)}\). Independence of the choice of resolution is straightforward using Lemma \ref{lem:projection_formula} as well as the fact that given two resolutions \(\X_1\to\X\) and \(\X_2\to\X\), there is a third resolution \(\X_3\to\X\) dominating \(\X_1\) and \(\X_2\).
		
	For the intersection \((c_1(\widetilde{\X}/\mathbb{P}^1)_Th(\widetilde{\A}_T))\), choose a Cartan representative \(\Sigma+\sigma\) for \(c_1(\widetilde{\X}/\mathbb{P}^1)_T\). We similarly compute \begin{align*}
		\sum_{k=0}^N\frac{a_k}{k!}(c_1(\widetilde{\X}/\mathbb{P}^1)_T\widetilde{\A}_T^k)(\eta)
		=&\sum_{k=0}^N\frac{a_k}{k!}\int_{\widetilde{\X}}(\Sigma+\langle\sigma,\eta\rangle)\wedge(\widetilde{\Omega}+\langle\widetilde{m}_T,\eta\rangle)^k \\
		=& \int_{\widetilde{\X}}\sum_{k=0}^N\frac{a_k}{k!}(\Sigma+\langle\sigma,\eta\rangle)\wedge(\widetilde{\Omega}+\langle\widetilde{m}_T,\eta\rangle)^k \\
		=& \frac{1}{n!}\int_{\widetilde{\X}}\sum_{k=0}^{N-n}\frac{a_{k+n}}{k!}\langle\widetilde{m}_T,\eta\rangle^k\Sigma\wedge\widetilde{\Omega}^n \\
		&+\frac{1}{(n+1)!}\int_{\widetilde{\X}}\sum_{k=0}^{N-n-1}\frac{a_{k+n+1}}{k!}\langle\sigma,\eta\rangle\langle\widetilde{m}_T,\eta\rangle^k\widetilde{\Omega}^{n+1}.
	\end{align*} Sending \(N\to\infty\), we have uniform convergence in the integrals and \begin{align*}
		\sum_{k=0}^\infty\frac{a_k}{k!}(c_1(\widetilde{\X}/\mathbb{P}^1)_T\widetilde{\A}_T^k)(\eta)=&\frac{1}{n!}\int_{\widetilde{\X}}h^{(n)}(\langle\widetilde{m}_T,\eta\rangle)\Sigma\wedge\widetilde{\Omega}^n \\ &+\frac{1}{(n+1)!}\int_{\widetilde{\X}}\langle\sigma,\eta\rangle h^{(n+1)}(\langle\widetilde{m}_T,\eta\rangle)\widetilde{\Omega}^{n+1}.
	\end{align*} Hence the intersection \((c_1(\widetilde{\X}/\mathbb{P}^1)_Tf(\widetilde{\A}_T))\) converges absolutely on a neighbourhood of \(\xi\). It remains to see this is independent of the choice of resolution. It suffices to show that if we have a tower \(\widetilde{\X}'\to\widetilde{\X}\to\X\) of equivariant resolutions, then the intersection numbers on \(\widetilde{\X}'\) and \(\widetilde{\X}\) agree. This is straightforward however; the difference \(c_1(\widetilde{\X}'/\mathbb{P}^1)_T-c_1(\widetilde{\X}/\mathbb{P}^1)_T\) is a sum of equivariant divisors on \(\widetilde{\X}'\), which all map to analytic subvarieties of \(\X\) with dimension at most \(n-2\). It follows from the projection formula that the intersection of \((\widetilde{\A}_T')^{n-1}\) with these divisors vanishes, so the divisors do not contribute to the equivariant intersections.
\end{proof}
		
\begin{definition}[{\cite[Proposition 3.26]{Ino20}}]
	Let \((\X,\A_T)\) be a test configuration for \((X,\alpha_T)\), and choose a \(T^{\mathbb{C}}\times\mathbb{C}^*\)-equivariant resolution of singularities \(\widetilde{\X}\to\X\). The \emph{weighted Donaldson--Futaki invariant} of \((\X,\A_T)\) is  \[\DF_{v, w}(\X, \A_T) := \left[-(c_1(\widetilde{\X}/\mathbb{P}^1)_T f(\widetilde{\A}_T))+\frac{(c_1(X)_T f'(\alpha_T))}{(g'(\alpha_T))}(g(\widetilde{\A}_T))\right](\xi),\] and is independent of the choice of equivariant resolution.
\end{definition}
				
Here the expression between the square brackets is an analytic function on \(\mathfrak{t}\) defined in a neighbourhood of \(\xi\in\mathfrak{t}\), and the expression is evaluated at \(\xi\) to produce a real number that is the weighted Donaldson--Futaki invariant. We remark that \((g')^{(n)}=g^{(n+1)}>0\), so \((g'(\alpha_T))(\xi)>0\) and hence \((g'(\alpha_T))\) is non-zero on a neighbourhood of \(\xi\).

\begin{remark}
	Lahdili has also introduced a weighted Donaldson--Futaki invariant in the case the total space \(\X\) of the K{\"a}hler test configuration is smooth \cite[Definition 11]{Lah19}. By \cite[Proposition 3.26 (3)]{Ino20}, this agrees with the invariant \(\DF_{v,w}(\X,\A_T)\) defined above, and hence Inoue's invariant provides an extension of Lahdili's invariant to singular test configurations.
\end{remark}

From the proof of \cite[Proposition 3.6]{Ino20} we have:

\begin{lemma}[\cite{Ino20}]\label{lem:average_weighted_scalar_curvature}
	\[\frac{(c_1(X)_T f'(\alpha_T))(\xi)}{(g'(\alpha_T))(\xi)}=\hat{S}_{v,w},\] where \(\hat{S}_{v,w}\) is the average weighted scalar curvature defined in equation \eqref{eq:average_weighted_scalar_curvature}.
\end{lemma}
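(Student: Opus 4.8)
The plan is to reduce both sides to explicit integrals over $X$ and match them. First I would observe that, since $S_{v,w}(\omega)=S_v(\omega)/w(\mu)$, the definition \eqref{eq:average_weighted_scalar_curvature} reads $\hat{S}_{v,w}=\left(\int_X S_v(\omega)\,\omega^n\right)\big/\left(\int_X w(\mu)\,\omega^n\right)$, which is precisely the constant $c_{v,w}$ of Lemma \ref{lem:const_cvw}. It therefore suffices to identify the denominator $(g'(\alpha_T))(\xi)$ with $\frac{1}{n!}\int_X w(\mu)\,\omega^n$ and the numerator $(c_1(X)_T f'(\alpha_T))(\xi)$ with $\frac{1}{n!}\int_X S_v(\omega)\,\omega^n$, so that the common factor $\frac{1}{n!}$ cancels.

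For the denominator I would rerun the bookkeeping from the proof of Proposition \ref{prop:convergence}, but on $X$ (of dimension $n$) in place of $\widetilde{\X}$: extracting the top-degree part of $(\omega+\langle\mu,\eta\rangle)^k$ gives, for any power series $h$, the identity $(h(\alpha_T))(\eta)=\frac{1}{n!}\int_X h^{(n)}(\langle\mu,\eta\rangle)\,\omega^n$. Taking $h=g'$, $\eta=\xi$, and using $(g')^{(n)}=g^{(n+1)}=\widetilde{w}$ from Assumption \ref{ass:weights}, I obtain $(g'(\alpha_T))(\xi)=\frac{1}{n!}\int_X\widetilde{w}(\mu^\xi)\,\omega^n=\frac{1}{n!}\int_X w(\mu)\,\omega^n$, as wanted.

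For the numerator I would represent $c_1(X)_T$ by the Cartan form $\Ric(\omega)+\langle-\Delta\mu,\cdot\rangle$, where $-\Delta\mu$ is the Ricci moment map of Remark \ref{rem:Ricci_moment_map}. The same top-degree expansion of $(c_1(X)_T h(\alpha_T))(\eta)$ now splits into a contribution from the Ricci form wedged with $\omega^{n-1}$ and a contribution from the Ricci moment map against $\omega^n$:
\[(c_1(X)_T h(\alpha_T))(\eta)=\tfrac{1}{(n-1)!}\int_X h^{(n-1)}(\langle\mu,\eta\rangle)\,\Ric(\omega)\wedge\omega^{n-1}+\tfrac{1}{n!}\int_X\langle-\Delta\mu,\eta\rangle\,h^{(n)}(\langle\mu,\eta\rangle)\,\omega^n.\]
With $h=f'$, $\eta=\xi$, and $(f')^{(n-1)}=\widetilde{v}$, $(f')^{(n)}=\widetilde{v}'$, the identity $n\,\Ric(\omega)\wedge\omega^{n-1}=S(\omega)\,\omega^n$ turns the first term into $\frac{1}{n!}\int_X v(\mu)S(\omega)\,\omega^n$; and since $v=\widetilde v\circ\ell_\xi$ gives $dv(\mu)=\widetilde{v}'(\mu^\xi)\,\xi$, the second term becomes $\frac{1}{n!}\int_X\langle dv(\mu),-\Delta\mu\rangle\,\omega^n$.

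The crux is then to prove $\int_X\langle dv(\mu),-\Delta\mu\rangle\,\omega^n=\frac{1}{2}\int_X\Tr(g\circ\Hess(v)(\mu))\,\omega^n$; combined with $\int_X\Delta(v(\mu))\,\omega^n=0$ this collapses the numerator to $\frac{1}{n!}\int_X S_v(\omega)\,\omega^n$ and finishes the proof. I would establish this identity by integration by parts, and this is the step where the conventions must be tracked with care: writing $\Delta=-\d^*\d=-\tfrac12 d^*d$, one gets $\int_X\widetilde{v}'(\mu^\xi)(-\Delta\mu^\xi)\,\omega^n=\frac{1}{2}\int_X\widetilde{v}''(\mu^\xi)\,|d\mu^\xi|_g^2\,\omega^n$, and because $\mu^\xi$ is the hamiltonian of $\xi$ we have $d\mu^\xi=-\omega(\xi,-)=-g(\mathcal{J}\xi,-)$, so $|d\mu^\xi|_g^2=|\xi|_g^2=g(\xi,\xi)$; comparing with $\Tr(g\circ\Hess(v)(\mu))=\widetilde{v}''(\mu^\xi)\,g(\xi,\xi)$ yields the claim. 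This integration-by-parts identity, whose factor of $\tfrac12$ comes precisely from the Kähler normalisation of $\Delta$, is the main obstacle and is exactly the computation extracted from the proof of \cite[Proposition 3.6]{Ino20}.
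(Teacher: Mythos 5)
Your proposal addresses a statement for which the paper offers no proof at all: the lemma is simply quoted from the proof of \cite[Proposition 3.6]{Ino20}. So there is nothing in the paper to compare against line by line; what you have written is in effect a reconstruction of the omitted argument, and nearly all of it checks out against the paper's stated conventions. The reduction of $\hat{S}_{v,w}$ to $\int_X S_v(\omega)\,\omega^n\big/\int_X w(\mu)\,\omega^n$ is immediate from Definition \ref{def:weighted_extremal} and \eqref{eq:average_weighted_scalar_curvature}; the dimension-$n$ analogue of the expansion in the proof of Proposition \ref{prop:convergence} does give $(h(\alpha_T))(\eta)=\frac{1}{n!}\int_X h^{(n)}(\langle\mu,\eta\rangle)\,\omega^n$, whence $(g'(\alpha_T))(\xi)=\frac{1}{n!}\int_X w(\mu)\,\omega^n$ by Assumption \ref{ass:weights}; your expansion of the numerator recovers exactly the invariant quantity of Lemma \ref{lem:const_cvw}(2); and the integration by parts, with the factor $\tfrac12$ coming from $\Delta=-\d^*\d=-\tfrac12 d^*d$ together with $|d\mu^\xi|_g^2=g(\xi,\xi)$, correctly reproduces the $\tfrac12\Tr(g\circ\Hess(v)(\mu))$ term in the definition of $S_v$, while $\int_X\Delta(v(\mu))\,\omega^n=0$ disposes of the remaining term. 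This is a clean, self-contained route to a fact the paper outsources.

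The one genuine soft spot is the normalisation of the Ricci moment map. Remark \ref{rem:Ricci_moment_map} asserts only that $-\Delta\mu$ is \emph{a} moment map for $\Ric(\omega)$; a moment map is unique only up to an additive constant $c\in\mathfrak{t}^*$, and your numerator is \emph{not} insensitive to this constant: replacing $-\Delta\mu$ by $-\Delta\mu+c$ changes $(c_1(X)_T f'(\alpha_T))(\xi)$ by $\frac{\langle c,\xi\rangle}{n!}\int_X\widetilde{v}'(\mu^\xi)\,\omega^n$, which is nonzero in general (e.g.\ for soliton-type weights $\widetilde{v}=e^x$). What you actually need is that $\Ric(\omega)+\langle-\Delta\mu,\cdot\rangle$ represents the \emph{canonical} equivariant extension $c_1(X)_T$, i.e.\ the one induced by the canonical $T$-linearisation of $K_X^{-1}$ that the paper fixes when it says first Chern classes ``are automatically extended to equivariant cohomology.'' This is true and standard, but it requires an argument: for instance, at a $T$-fixed point $p$ a normal-coordinate computation identifies $-\Delta\mu^\xi(p)$ with the weight of the induced $\xi$-action on $\Lambda^nT_pX$ (up to the paper's universal curvature normalisation), which is precisely the value of the canonical extension's moment map there; note also that $-\Delta\mu$ is itself independent of the normalisation of $\mu$, as it must be for such an identification to make sense. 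Without this step your computation determines the numerator only up to the ambiguity above, so you should add it (or cite \cite[Lemma 5]{Lah19}, where the identification is established). You are in good company in glossing this: the paper itself elides the point, and in the proof of Lemma \ref{lem:integral_twist} even uses the fibrewise representative $\Ric(\Omega_\tau)+\Delta m_T$, whose sign is inconsistent with Remark \ref{rem:Ricci_moment_map} and with the definition of $S_v$ -- your version, not that one, is the choice that makes the weighted scalar curvature come out right.
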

%				
%\begin{proof} First,
%	\begingroup
%	\allowdisplaybreaks
%	\begin{align*}
%		(c_1(X)_T f'(\alpha_T)) (\xi) &= \int_X (\Ric(\omega) - \Delta \mu^\xi) \sum_{k=0}^\infty \frac{a_{k+1}}{k!} (\omega + \langle \mu, \xi \rangle)^k \\
%		&= \int_X \Ric(\omega) \wedge \omega^{n-1} \sum_{k=n-1}^\infty \frac{a_{k+1}}{k!} \binom{k}{n-1} \langle\mu, \xi\rangle^{k-(n-1)} \\
%		&- \int_X \Delta(\mu^\xi) \omega^n \sum_{k=n}^\infty \frac{a_{k+1}}{k!} \binom{k}{n} \langle\mu, \xi\rangle^{k-n} \\
%		&= \frac{1}{n!} \int_X S(\omega) \omega^{n-1} \sum_{k=0}^\infty \frac{a_{k+n}}{k!} \langle\mu, \xi\rangle^{k} \\
%		&- \frac{1}{n!} \int_X \Delta(\mu^\xi) \omega^n \sum_{k=0}^\infty \frac{a_{k+n+1}}{k!} \langle\mu, \xi\rangle^k \\
%		&= \frac{1}{n!}\int_X (v(\mu)S(\omega)-\widetilde{v}'(\mu)\Delta(\mu^\xi))\omega^n \\
%		&= \frac{1}{n!}\int_X(v(\mu)S(\omega)+\frac{1}{2}\mathrm{Tr}(g\circ\Hess(v)(\mu))\omega^n \\
%		&= \frac{1}{n!}\int_X S_v(\omega)\omega^n.
%	\end{align*}
%	\endgroup Similarly, \[(g'(\alpha_T))(\xi)=\frac{1}{n!}\int_X w(\mu)\omega^n.\qedhere\]
%\end{proof}
				
We will also need some more general results concerning the equivariant calculus. In particular, instead of pushing forward equivariant classes to a point, it will be useful to push them to \(\mathbb{P}^1\) or \(\mathbb{C}\), to apply localisation in equivariant cohomology. 

Let \((\X,\A_T)\) be a test configuration for \((X,\alpha_T)\), with a choice of equivariant resolution \(\widetilde{\X}\to\X\). We denote by \(\widetilde{\X}_{\mathbb{C}}\) the restriction of \(\widetilde{\X}\) to \(\mathbb{C}\), and write \(\pi:\widetilde{\X}_{\mathbb{C}}\to\mathbb{C}\) for the projection. This induces an integration over the fibres map \[\pi_*:H^{2k}_{T\times S^1}(\widetilde{\X}_{\mathbb{C}};\mathbb{R})\to H^{2k-2n}_{T\times S^1}(\mathbb{C};\mathbb{R}),\] where the \(T\)-action on \(\mathbb{C}\) is taken to be trivial. Given an equivariant class \(\rho\in H^{2k}_{T\times S^1}(\widetilde{\X}_{\mathbb{C}};\mathbb{R})\) we write \((\rho)_{\mathbb{C}}\in H^{2k-2n}_{T\times S^1}(\mathbb{C};\mathbb{R})\) for the image of \(\rho\) under \(\pi_*\), and extend this to the completion of the equivariant cohomology rings of \(\widetilde{\X}\) and \(\mathbb{C}\). In particular, we have \[(c_1(\widetilde{\X}/\mathbb{P}^1)_{T\times S^1}f(\widetilde{\A}_{T\times S^1}))_{\mathbb{C}},\:(g(\widetilde{\A}_{T\times S^1}))_{\mathbb{C}}\in \hat{H}^*_{T\times S^1}(\mathbb{C};\mathbb{R}).\] Notice we use \(T\times S^1\)-equivariant cohomology since we are working with the non-compact test configuration \(\X_{\mathbb{C}}\), whereas previously the information in the \(S^1\)-action was already encoded in the compactification.

Now, since the \(T\)-action on \(\mathbb{C}\) is trivial, there is a Kunneth decomposition \[H^{2\ell}_{T\times S^1}(\mathbb{C};\mathbb{R})=\bigoplus_{j+k=\ell}H^{2j}_T(\mathrm{pt};\mathbb{R})\otimes H^{2k}_{S^1}(\mathbb{C};\mathbb{R}).\] By the Chern--Weil isomorphism, \(H^{2j}_T(\mathrm{pt};\mathbb{R})\) is the space \(S^{j}\mathfrak{t}^*\) of homogeneous degree \(j\) polynomials on \(\mathfrak{t}\). Thus, given an element \(\xi\in\mathfrak{t}\), there is an evaluation map \[\mathrm{el}_\xi:H^*_{T\times S^1}(\mathbb{C};\mathbb{R})\to H^*_{S^1}(\mathbb{C};\mathbb{R}),\] given by evaluating the \(H^*_T(\mathrm{pt};\mathbb{R})\)-component at \(\xi\). Following \cite{Ino20}, define \[D_\xi:H^*_{T\times S^1}(\mathbb{C};\mathbb{R})\to H^2_{S^1}(\mathbb{C};\mathbb{R})\] to be the composition of \(\mathrm{el}_\xi\) with projection to the degree 2 component in equivariant cohomology. Note there is an \(S^1\)-equivariant deformation retraction \(\mathbb{C}\to\mathrm{pt}\), so \[H^2_{S^1}(\mathbb{C};\mathbb{R})\cong H^2_{S^1}(\mathrm{pt};\mathbb{R})=H^2(\mathbb{CP}^\infty;\mathbb{R})=\mathbb{R}\cdot c_1(\mathcal{O}(1)),\] where \(c_1(\mathcal{O}(1))\) is the first chern class of the universal line bundle \(\mathcal{O}(1)\) on \(\mathbb{CP}^\infty\). Thus, taking the coefficient of \(c_1(\mathcal{O}(1))\), we may instead consider \(D_\xi\) as a map \[D_\xi:H^*_{T\times S^1}(\mathbb{C};\mathbb{R})\to\mathbb{R}.\] The following result of Inoue is a consequence of the localisation formula in equivariant cohomology:

\begin{proposition}[{\cite[Proposition 3.19]{Ino20}}]\label{prop:localisation}
	Let \((\X,\mathcal{A}_T)\) be a test configuration for \((X,\alpha_T)\). Let \(\xi\in\mathfrak{t}\), and suppose that \(f\) is a power series with real coefficients that converges absolutely on the subset \(\langle P,\xi\rangle\subset\mathbb{R}\), where \(P\subset\mathfrak{t}^*\) is the moment polytope of \((X,\alpha_T)\). Choose a \(T^{\mathbb{C}}\times \mathbb{C}^*\)-equivariant resolution of singularities \(\widetilde{\X}\to\X\). Then \[D_\xi(f(\widetilde{\A}_{T\times S^1}))_{\mathbb{C}} = (f(\widetilde{\A}_T))(\xi),\] and \[D_\xi(c_1(\widetilde{\X}_{\mathbb{C}}/\mathbb{C})_{T\times S^1}f(\widetilde{\A}_{T\times S^1}))_{\mathbb{C}}=(c_1(\widetilde{\X}/\mathbb{P}^1)_T f(\widetilde{\A}_T))(\xi).\]
\end{proposition}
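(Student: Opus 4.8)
The plan is to prove the identity first for the monomials \(f(x)=x^{k}\) and then recover the general statement by summation. The computation in the proof of Proposition \ref{prop:convergence} shows that the series \(\sum_{k}\tfrac{a_{k}}{k!}(\widetilde{\A}_{T\times S^{1}}^{k})_{\mathbb{C}}\) and \(\sum_{k}\tfrac{a_{k}}{k!}(c_{1}(\widetilde{\X}_{\mathbb{C}}/\mathbb{C})_{T\times S^{1}}\widetilde{\A}_{T\times S^{1}}^{k})_{\mathbb{C}}\) converge absolutely and uniformly in a neighbourhood of \(\xi\); since \(D_{\xi}\) and \((\,\cdot\,)_{\mathbb{C}}\) are continuous and linear, it suffices to establish \(D_{\xi}(\widetilde{\A}_{T\times S^{1}}^{k})_{\mathbb{C}}=(\widetilde{\A}_{T}^{k})(\xi)\) and \(D_{\xi}(c_{1}(\widetilde{\X}_{\mathbb{C}}/\mathbb{C})_{T\times S^{1}}\widetilde{\A}_{T\times S^{1}}^{k})_{\mathbb{C}}=(c_{1}(\widetilde{\X}/\mathbb{P}^{1})_{T}\widetilde{\A}_{T}^{k})(\xi)\) for each \(k\), and then reassemble.

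The main tool is equivariant localisation for the \(S^{1}\subset\mathbb{C}^{*}\)-action. Write \(\pi:\widetilde{\X}\to\mathbb{P}^{1}\), and let \(\rho\) be an equivariant representative of the integrand. Since \(\widetilde{\X}_{\mathbb{C}}=\pi^{-1}(\mathbb{C})\) and \(\mathbb{C}\hookrightarrow\mathbb{P}^{1}\) is an open immersion, fibre integration commutes with restriction to \(\mathbb{C}\). Using the \(S^{1}\)-equivariant retraction \(\mathbb{C}\to\{0\}\) and the identification \(H^{2}_{S^{1}}(\mathbb{C})\cong\mathbb{R}\cdot c_{1}(\mathcal{O}(1))\), the operator \(D_{\xi}\) extracts the coefficient of \(c_{1}(\mathcal{O}(1))\) in the restriction of \((\rho)_{\mathbb{C}}\) to \(0\); equivalently, \(D_{\xi}(\rho)_{\mathbb{C}}\) is the finite part of the equivariant integral of \(\rho\) over the non-compact space \(\widetilde{\X}_{\mathbb{C}}\), computed by localising to the \(S^{1}\)-fixed loci contained in the central fibre \(\widetilde{\X}_{0}\). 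On the other hand, \((\,\cdot\,)(\xi)\) is the \(\xi\)-evaluation of the integral of \(\rho\) over the compact space \(\widetilde{\X}\), whose localisation receives contributions from those same central fixed loci together with the fibre \(\widetilde{\X}_{\infty}\) over \(\infty\in\mathbb{P}^{1}\). The two quantities therefore differ precisely by the contribution of \(\widetilde{\X}_{\infty}\).

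The crux, and the step I expect to be the main obstacle, is to show that the fibre at infinity does not contribute to the coefficient singled out by \(D_{\xi}\). Here I would exploit the product structure \eqref{eq:general_fibre}: over \(\mathbb{P}^{1}\setminus\{0\}\) the test configuration is \(X\times(\mathbb{P}^{1}\setminus\{0\})\) with \(\mathbb{C}^{*}\) acting trivially on the \(X\)-factor, so \(S^{1}\) acts trivially on \(\widetilde{\X}_{\infty}\cong X\), the equivariant normal bundle of \(\widetilde{\X}_{\infty}\) is the tangent line \(T_{\infty}\mathbb{P}^{1}\) (whose equivariant Euler class is a unit multiple of \(c_{1}(\mathcal{O}(1))\)), and, once the \(S^{1}\)-moment map is normalised compatibly with Lemma \ref{lem:moment_map_normalisation} so as to vanish on \(\widetilde{\X}_{\infty}\), the restriction of \(\rho\) to \(\widetilde{\X}_{\infty}\) carries no equivariant dependence beyond the K\"ahler class. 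Its contribution is then of the wrong order in \(c_{1}(\mathcal{O}(1))\) to survive \(D_{\xi}\), and the remaining central-fibre contribution is exactly \((\widetilde{\A}_{T}^{k})(\xi)\), evaluated by the same integral manipulation as in the proof of Proposition \ref{prop:convergence}. Matching the various powers of \(c_{1}(\mathcal{O}(1))\), the weights \(\pm 1\) of the normal bundles at \(0\) and \(\infty\), and the moment-map sign convention is the delicate part of the argument.

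The second identity is proved in exactly the same way, inserting the auxiliary class \(c_{1}(\widetilde{\X}_{\mathbb{C}}/\mathbb{C})_{T\times S^{1}}\); one uses that it restricts to \(c_{1}(\widetilde{\X}/\mathbb{P}^{1})_{T}\) under the comparison of the \(\mathbb{C}\)- and \(\mathbb{P}^{1}\)-pushforwards, and that the relative anticanonical class is compatible with the product splitting near \(\widetilde{\X}_{\infty}\), so that the infinity fibre again makes no contribution after applying \(D_{\xi}\). Independence of the chosen equivariant resolution is inherited from Proposition \ref{prop:convergence} together with the projection formula (Lemma \ref{lem:projection_formula}).
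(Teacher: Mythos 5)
The paper itself gives no proof of this proposition: it is quoted from \cite[Proposition 3.19]{Ino20}, with only the remark that it ``is a consequence of the localisation formula in equivariant cohomology,'' and your sketch reconstructs exactly that argument --- comparing the $\mathbb{C}$-pushforward, which localises at the $S^1$-fixed loci inside $\widetilde{\X}_0$, against the integral over the compact $\widetilde{\X}$, and checking that the fibre over $\infty$ contributes only a term of order $c_1(\mathcal{O}(1))^{-1}$, which $D_\xi$ discards --- so your approach matches the one the paper points to and its skeleton is sound. One correction: the vanishing of $m_{S^1}$ on $\widetilde{\X}_\infty$, which your argument genuinely requires (shifting $m_{S^1}$ by a constant changes the left-hand side $D_\xi(f(\widetilde{\A}_{T\times S^1}))_{\mathbb{C}}$ but not the right-hand side), is not what Lemma \ref{lem:moment_map_normalisation} provides --- that lemma normalises only the $T$-component $m_T$; the needed normalisation is instead part of the trivial $S^1$-equivariant structure at infinity built into the compactification defining $\A_{T\times S^1}$, and you should invoke that convention rather than the lemma. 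Beyond this mis-attribution, the ``delicate part'' you defer (matching powers of $c_1(\mathcal{O}(1))$ with the normal-bundle weights $\pm1$ at $0$ and $\infty$) does go through as you predict, including for the second identity, where $c_1(\widetilde{\X}/\mathbb{P}^1)_{T\times S^1}$ restricts to $\widetilde{\X}_\infty$ with no $S^1$-dependence because the $\pm u$ contributions from $c_1(\widetilde{\X})$ and $\pi^*c_1(\mathbb{P}^1)$ cancel.
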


\begin{remark}
	In order to compute the intersections over \(\mathbb{C}\) we can proceed exactly as in the proof of Proposition \ref{prop:convergence}. That is, we take a resolution of singularities \(p:\widetilde{\X}\to\X\), pull back \(\Omega\) and its moment map \(m:\X\to\mathfrak{t}^*\oplus\mathbb{R}\) for the \(T\times S^1\)-action to get an equivariant representative \(\widetilde{\Omega}+\widetilde{m}\) for \(\widetilde{\A}_{T\times S^1}:=p^*\A_{T\times S^1}\). For the class \(c_1(\widetilde{\X}/\mathbb{P}^1)_{T\times S^1}\), we simply choose an equivariant representative \(\Sigma+\sigma\). Then we can integrate equivariant forms over the fibre to produce an equivariant representative on the base \(\mathbb{C}\). We comment that this is only a \emph{continuous} representative in general; since \(\widetilde{\X}\to\mathbb{C}\) is not a submersion, the fibre integrals will not necessarily produce smooth forms on the base, hence we only have an equivariant \emph{current} on the base representing the equivariant intersection.
\end{remark}
	
\section{Relative weighted K-stability}

In this section we study relative weighted K-stability. One such definition was previously given by Lahdili \cite{Lah20b}, who related the condition to existence of a weighted extremal metric. Here we give a separate treatment using inner products associated to test configurations. After showing the two definitions are equivalent, we use Lahdili's work to argue that a weighted extremal manifold is relatively weighted K-semistable for our definition. This will be upgraded to relative weighted K-polystability in the next section.

\subsection{Twisting test configurations}
		
Since we are dealing with non-projective K{\"a}hler manifolds, we will need a mild generalisation of test configurations that allows one to incorporate irrational elements of the Lie algebra \(\mathfrak{t}\); such test configurations were used in \cite{Der18}. We first describe how integral elements of \(\mathfrak{t}\) allow one to twist test configurations, and then we will extend the definition to arbitrary elements of \(\mathfrak{t}\). 

Let \(\beta:\mathbb{C}^*\to T^{\mathbb{C}}\) be a 1-parameter subgroup. Such a \(\beta\) determines a \emph{product test configuration} \((\X_\beta,\A_T)\), first by letting \[(\X_\beta,\A_T)|_{\mathbb{C}}:=(X\times\mathbb{C},p^*\alpha_T)\] where the \(\mathbb{C}^*\)-action on the right-hand side is \(s(x,t):=(\beta(s)x,st)\) for \(s\in\mathbb{C}^*\), and then compactifying trivially over \(\infty\in\mathbb{P}^1\), in the sense that the restriction to \(\mathbb{P}^1\backslash\{0\}\) is trivial. Letting \(h_\beta:X\to\mathbb{R}\) be a hamiltonian function for the generator of \(\beta\), the Donaldson--Futaki invariant \(\DF_{v,w}(\X_\beta,\A_T)\) of the product test configuration is equal to \begin{equation}\label{eq:weighted_Futaki}
	F_{v,w}(\beta):=\frac{1}{n!}\int_X(\hat{S}_{v,w}-S_{v,w}(\omega))h_\beta w(\mu) \omega^n,
\end{equation} by \cite[Proposition 3]{Lah19}. Identifying \(\beta\) with an integral element of the Lie algebra \(\mathfrak{t}\) of \(T\), note this formula extends naturally to all of \(\mathfrak{t}\), even to irrational elements which do not define test configurations. It may be that such an irrational element destabilises the manifold \((X,\omega)\) (see for example \cite{ACGTF08}) so we will incorporate these into our definition of stability.

\begin{definition}
	Let \((\X,\mathcal{A}_T)\) be a \(T\)-equivariant K{\"a}hler test configuration for \((X,\alpha_T)\), and let \(\beta\in\mathfrak{t}\). Then we define \[\DF_{v,w}(\X,\A_T,\beta):=\DF_{v,w}(\X,\A_T)+F_{v,w}(\beta),\] where \(F_{v,w}(\beta)\) is defined in \eqref{eq:weighted_Futaki}.
\end{definition}

Given a test configuration \((\X,\A_T)\) for \((X,\alpha_T)\) and a 1-parameter subgroup \(\beta:\mathbb{C}^*\to T^{\mathbb{C}}\), we can \emph{twist} the test configuration by the 1-parameter subgroup to produce a new test configuration. To do this, we restrict \((\X,\mathcal{A}_T)\) to \(\mathbb{C}\), then define a new \(\mathbb{C}^*\) action to be the original \(\mathbb{C}^*\)-action on \(\X\) multiplied by the 1-parameter subgroup \(\beta\). Compactifying trivially over infinity, we then produce a new test configuration \((\X',\A'_T)\) for \((X,\alpha_T)\). We claim: \begin{lemma}\label{lem:integral_twist} 
	Let \((\X',\A'_T)\) denote the twist of a test configuration \((\X,\A_T)\) by a one-parameter subgroup \(\beta:\mathbb{C}^*\to T^{\mathbb{C}}\). Then
	\[\DF_{v,w}(\X',\A_T')=\DF_{v,w}(\X,\A_T)+F_{v,w}(\beta).\]
\end{lemma}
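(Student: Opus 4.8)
The plan is to compute both Donaldson--Futaki invariants by pushing the relevant equivariant classes forward to $\mathbb{C}$ and applying the localisation formula of Proposition~\ref{prop:localisation}, exploiting the fact that the twist leaves the underlying space and its $T$-action over $\mathbb{C}$ untouched and only modifies the $\mathbb{C}^*$-action. First I would record the effect of the twist on the equivariant data over $\mathbb{C}$. By construction $\X'_{\mathbb{C}}=\X_{\mathbb{C}}$ as $T$-spaces, and the twisted $S^1$-action has generator $V+\xi_\beta$, where $\xi_\beta$ is the vector field generated by $\beta$. Hence the twisted $S^1$-moment map is $m_{S^1}+\langle m_T,\beta\rangle$ while $m_T$ is unchanged, and the same shift applies to the $S^1$-component of any equivariant representative $\Sigma+\sigma$ of the relative class $c_1(\widetilde{\X}_{\mathbb{C}}/\mathbb{C})_{T\times S^1}$. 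Consequently, evaluating the twisted $T\times S^1$-equivariant classes at $(\eta,s)\in\mathfrak{t}\oplus\mathbb{R}$ returns the original classes evaluated at $(\eta+s\beta,s)$. Since fibre integration over $\mathbb{C}$ commutes with this reparametrisation, writing $\Psi(\eta,s)$ for the combination
\[ \Psi(\eta,s):=-\,(c_1(\widetilde{\X}_{\mathbb{C}}/\mathbb{C})_{T\times S^1}f(\widetilde{\A}_{T\times S^1}))_{\mathbb{C}}+\hat{S}_{v,w}\,(g(\widetilde{\A}_{T\times S^1}))_{\mathbb{C}}, \]
the twisted version satisfies $\Psi'(\eta,s)=\Psi(\eta+s\beta,s)$.

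By Proposition~\ref{prop:localisation} together with Lemma~\ref{lem:average_weighted_scalar_curvature}, the operator $D_\xi$ --- which evaluates the $\mathfrak{t}$-variable at $\xi$ and extracts the coefficient of $s$ --- recovers the Donaldson--Futaki invariant: $\DF_{v,w}(\X,\A_T)=D_\xi\Psi$, and likewise $\DF_{v,w}(\X',\A'_T)=D_\xi\Psi'$. Expanding $\Psi(\eta,s)=\Psi_0(\eta)+s\,\Psi_1(\eta)+O(s^2)$, the coefficient of $s$ in $\Psi(\xi+s\beta,s)$ equals $\langle d\Psi_0(\xi),\beta\rangle+\Psi_1(\xi)$, and $\Psi_1(\xi)=D_\xi\Psi=\DF_{v,w}(\X,\A_T)$. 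Therefore
\[ \DF_{v,w}(\X',\A'_T)=\DF_{v,w}(\X,\A_T)+\langle d\Psi_0(\xi),\beta\rangle, \]
and it remains to identify the derivative of the $s$-independent part $\Psi_0$ with $F_{v,w}(\beta)$.

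The final step is to compute $\Psi_0=-H_0+\hat{S}_{v,w}G_0$ explicitly, where $H_0,G_0$ are the $s$-independent parts of the two push-forwards. Repeating the fibre-integration computation from the proof of Proposition~\ref{prop:convergence}, but retaining only the $0$-form-on-$\mathbb{C}$ contribution and evaluating on a smooth fibre via Lemma~\ref{lem:independence}, I expect $G_0(\eta)=\frac{1}{n!}\int_X g^{(n)}(\langle\mu,\eta\rangle)\,\omega^n$; differentiating and using $w(\mu)=g^{(n+1)}(\langle\mu,\xi\rangle)$ and $h_\beta=\langle\mu,\beta\rangle$ gives $\langle dG_0(\xi),\beta\rangle=\frac{1}{n!}\int_X w(\mu)h_\beta\,\omega^n$. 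The analogous computation for $H_0$ produces a leading term $\frac{1}{n!}\int_X v(\mu)S(\omega)h_\beta\,\omega^n$ from $\Sigma\wedge\omega^{n-1}=\frac{1}{n}S(\omega)\omega^n$, together with further terms involving the $T$-moment map $\sigma_T$ of the relative Chern class, which restricts on a smooth fibre to the Ricci moment map $-\Delta_\omega\mu$ by Remark~\ref{rem:Ricci_moment_map}. Assembling these and integrating by parts should convert the Ricci-moment-map terms into the remaining $-2\Delta(v(\mu))+\frac12\Tr(g\circ\Hess(v)(\mu))$ contributions to $S_v(\omega)$, yielding $\langle dH_0(\xi),\beta\rangle=\frac{1}{n!}\int_X S_v(\omega)h_\beta\,\omega^n$. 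Since $S_v=S_{v,w}\,w$, we then obtain
\[ \langle d\Psi_0(\xi),\beta\rangle=\frac{1}{n!}\int_X\big(\hat{S}_{v,w}-S_{v,w}(\omega)\big)h_\beta\,w(\mu)\,\omega^n=F_{v,w}(\beta), \]
as required.

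The main obstacle is this last identification for $H_0$: one must carefully isolate the correct $s$-independent part of the relative-Chern-class push-forward, keep track of both the $f^{(n-1)}$- and $\sigma_T$-type terms, and perform the integration by parts that reconstitutes the Laplacian and Hessian contributions to the weighted scalar curvature from the Ricci moment map. The remaining bookkeeping --- the reparametrisation identity, the reduction of $\DF$ to $D_\xi\Psi$, and the $G_0$ computation --- is comparatively routine given Proposition~\ref{prop:localisation} and Lemma~\ref{lem:independence}.
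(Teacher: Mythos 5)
Your proposal is correct and follows essentially the same route as the paper: both compute \(\DF_{v,w}\) by pushing the equivariant classes forward to \(\mathbb{C}\) and applying \(D_\xi\) (Proposition \ref{prop:localisation}), and your directional-derivative terms \(\langle dG_0(\xi),\beta\rangle\) and \(\langle dH_0(\xi),\beta\rangle\) are literally the paper's extra \(\beta\)-terms, which it evaluates as constants via Lemma \ref{lem:independence} and, for the Chern-class contribution, a representative-independence argument followed by the fibrewise Ricci representative \(\Ric(\Omega_\tau)+\Delta m_T\) of Remark \ref{rem:Ricci_moment_map}. Your reparametrisation identity \(\Psi'(\eta,s)=\Psi(\eta+s\beta,s)\) is a clean repackaging of the paper's explicit expansion of the twisted representative \(\Omega+m_T+(m_{S^1}+\langle m_T,\beta\rangle)\), and the step you flag as the main obstacle — justifying the replacement of \(\Sigma+\sigma_T\) by the Ricci representative on fibres and the integration by parts reconstituting \(S_v\) — is exactly the computation the paper carries out there.
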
 

\begin{proof}
	By Proposition \ref{prop:localisation}, we have \[(g(\widetilde{\A}_T'))(\xi)=D_\xi(g(\widetilde{\A}'_{T\times S^1_\beta}))_{\mathbb{C}},\] where we write \(S^1_\beta\) to remind ourselves the circle action is twisted by \(\beta\). The class \(\widetilde{\A}'_{T\times S^1_\beta}|_{\mathbb{C}}\) has as an equivariant representative \[\widetilde{\Omega}+\widetilde{m}_T+\widetilde{m}_{S^1_\beta}=\widetilde{\Omega}+\widetilde{m}_T+(\widetilde{m}_{S^1}+\langle \widetilde{m}_T,\beta\rangle),\] where \(\widetilde{m}_T+(\widetilde{m}_{S^1}+\langle \widetilde{m}_T,\beta\rangle)\) takes values in \(\mathfrak{t}^*\oplus \mathbb{R}=\mathrm{Lie}(T\times S^1)^*\). For the remainder of the proof we shall omit the excessive tilde notation, although the reader should keep in mind objects are always integrated over the resolution \(\widetilde{\X}\). Evaluating at \(\xi\in\mathfrak{t}\), an equivariant current representative for \(D_\xi(g(\mathcal{A}'_{T\times S^1_\beta}))_{\mathbb{C}}\) is \begin{align}\label{eq:dummy}
		&\left[\int_{\X'_{\mathbb{C}} / \mathbb{C}} \sum_{k=0}^\infty \frac{b_k}{k!} (\Omega + m_{S^1} + \langle m_T, \beta \rangle + \langle m_T, \xi \rangle)^k\right]_{\mathrm{deg=2}} \nonumber \\
		=&\int_{\X'_{\mathbb{C}} / \mathbb{C}} \sum_{k=n+1}^\infty \frac{b_k}{k!} \binom{k}{n+1} (\Omega + m_{S^1} + \langle m_T, \beta \rangle)^{n+1} \langle m_T, \xi \rangle^{k-(n+1)} \nonumber \\
		=&\frac{1}{(n+1)!} \int_{\X'_{\mathbb{C}} / \mathbb{C}} \sum_{k=0}^\infty \frac{b_{k+n+1}}{k!} (\Omega + m_{S^1} + \langle m_T, \beta \rangle)^{n+1} \langle m_T, \xi \rangle^k \nonumber \\
		=&\frac{1}{(n+1)!} \int_{\X'_{\mathbb{C}} / \mathbb{C}} g^{(n+1)}(\langle m_T, \xi \rangle) (\Omega + m_{S^1} + \langle m_T,\beta \rangle)^{n+1} \nonumber \\
		=& \frac{1}{(n+1)!} \int_{\X'_{\mathbb{C}} / \mathbb{C}} w(m_T) (\Omega + m_{S^1})^{n+1} + \frac{1}{n!} \int_{\X'_{\mathbb{C}} / \mathbb{C}} w(m_T) \langle m_T, \beta \rangle \Omega^n.
	\end{align} A similar calculation (or setting \(\beta=0\) above) shows that the first term of \eqref{eq:dummy} is an equivariant representative for \[D_\xi(g(\A_{T\times S^1}))_{\mathbb{C}}=(g(\mathcal{A}_T))(\xi),\] i.e. the corresponding invariant for the untwisted test configuration (note the restrictions \(\X'_{\mathbb{C}}\) and \(\X_{\mathbb{C}}\) are identical, by definition of \(\X'\)). By Lemma \ref{lem:independence}, the second term of \eqref{eq:dummy} is a constant function on the base, equal to \(\frac{1}{n!}\int_X h_\beta w(\mu)\omega^n\). We therefore have \[(g(\A'_T))(\xi)=(g(\A_T))(\xi)+\frac{1}{n!}\int_X h_\beta w(\mu)\omega^n.\]
	
	We next calculate \[(c_1(\X'/\mathbb{P}^1)_Tf(\mathcal{A}'_T))(\xi)=D_\xi(c_1(\X'_{\mathbb{C}}/\mathbb{C})_{T\times S^1_\beta}f(\A'_{T\times S^1_\beta}))_{\mathbb{C}}.\] Choosing an equivariant representative \[\Sigma+\sigma_T+\sigma_{S^1_\beta}=\Sigma+\sigma_T+(\sigma_{S^1}+\langle\sigma_T,\beta\rangle)\] of \(c_1(\X'_{\mathbb{C}}/\mathbb{C})_{T\times S^1_\beta}\) on \(\X'_{\mathbb{C}}\), we get \begingroup \allowdisplaybreaks \begin{align*}
		&\left[\int_{\X'_{\mathbb{C}} / \mathbb{C}} (\Sigma + \sigma_{S^1_\beta} + \langle \sigma_T, \xi \rangle) \sum_{k=0}^\infty \frac{a_k}{k!} (\Omega + m_{S^1_\beta} + \langle m_T, \xi \rangle)^k\right]_{\mathrm{deg}=2} \\
		=& \int_{\X'_{\mathbb{C}} / \mathbb{C}} \langle \sigma_T, \xi \rangle \sum_{k=n+1}^\infty \frac{a_k}{k!} \binom{k}{n+1} (\Omega+m_{S^1} + \langle m_T, \beta \rangle)^{n+1} \langle m_T, \xi \rangle^{k-(n+1)}  \\
		&+\int_{\X'_{\mathbb{C}} / \mathbb{C}} (\Sigma + \sigma_{S^1} + \langle \sigma_T, \beta \rangle) \sum_{k=n}^\infty \frac{a_k}{k!} \binom{k}{n} (\Omega + m_{S^1} + \langle m_T, \beta \rangle)^n \langle m_T, \xi \rangle^{k-n} \\
		=&\frac{1}{(n+1)!} \int_{\X'_{\mathbb{C}} / \mathbb{C}} \langle \sigma_T, \xi \rangle \sum_{k=0}^\infty \frac{a_{k+n+1}}{k!} (\Omega + m_{S^1} + \langle m_T, \beta \rangle)^{n+1} \langle m_T, \xi \rangle^{k}  \\
		&+\frac{1}{n!} \int_{\X'_{\mathbb{C}} / \mathbb{C}} (\Sigma + \sigma_{S^1} + \langle \sigma_T, \beta \rangle) \sum_{k=0}^\infty \frac{a_{k+n}}{k!} (\Omega + m_{S^1} + \langle m_T, \beta \rangle)^n \langle m_T, \xi \rangle^k \\
		=& \frac{1}{(n+1)!} \int_{\X'_{\mathbb{C}} / \mathbb{C}} \langle \sigma_T, \xi \rangle f^{(n+1)}(\langle m_T, \xi \rangle) (\Omega + m_{S^1} + \langle m_T, \beta \rangle)^{n+1} \\
		&+\frac{1}{n!}\int_{\X'_{\mathbb{C}} / \mathbb{C}}f^{(n)}(\langle m_T, \xi \rangle)(\Sigma + \sigma_{S^1} + \langle \sigma_T, \beta \rangle)(\Omega + m_{S^1} + \langle m_T, \beta \rangle)^n.
	\end{align*}\endgroup Expanding out these final lines, we have an equivariant representative of \[D_\xi(c_1(\X_{\mathbb{C}}/\mathbb{C})_{T\times S^1}f(\A_{T\times S^1}))_{\mathbb{C}}=(c_1(\X/\mathbb{P}^1)_Tf(\mathcal{A}_T))(\xi)\] for the untwisted test configuration, plus the remaining terms involving \(\beta\): \begin{align}\label{eq:remaining_terms}
		&\frac{1}{n!} \int_{\X_{\mathbb{C}}/\mathbb{C}}\langle m_T,\beta\rangle\langle\sigma_T,\xi\rangle \widetilde{v}'(\langle m_T,\xi\rangle)\Omega^n \\
		+&\frac{1}{n!} \int_{\X_{\mathbb{C}}/\mathbb{C}}\langle\sigma_T,\beta\rangle\widetilde{v}(\langle m_T,\xi\rangle)\Omega^n \nonumber \\
		+&\frac{1}{(n-1)!} \int_{\X_{\mathbb{C}}/\mathbb{C}}\langle m_T,\beta\rangle \widetilde{v}(\langle m_T,\xi\rangle)\Sigma\wedge\Omega^{n-1}. \nonumber
	\end{align} We claim that these remaining terms are (fibrewise) independent of the choice of representative \(\Sigma+\sigma_T\), and are in fact constant. To see the independence, suppose we have another representative \(\Sigma'+\sigma_T'\). We may write \(\Sigma' = \Sigma+i\d\db\psi\), in which case \(\sigma'_T = \sigma_T+d^c\psi\). Differentiating along the straight line \(t(\Sigma'+\sigma_T')+(1-t)(\Sigma+\sigma_T)\) joining these representatives, we get \begin{align*}
		&\frac{1}{n!} \int_{\X_{\mathbb{C}}/\mathbb{C}}\langle m_T,\beta\rangle d^c\psi(\xi) \widetilde{v}'(\langle m_T,\xi\rangle)\Omega^n \\
		+&\frac{1}{n!} \int_{\X_{\mathbb{C}}/\mathbb{C}}d^c\psi(\beta)\widetilde{v}(\langle m_T,\xi\rangle)\Omega^n \\
		+&\frac{1}{n!} \int_{\X_{\mathbb{C}}/\mathbb{C}}\langle m_T,\beta\rangle \widetilde{v}(\langle m_T,\xi\rangle)n\,i\d\db\psi\wedge\Omega^{n-1}.
	\end{align*} Integrating by parts and using the moment map property, the final term cancels with the first two terms, hence we have independence.
	
	To see that the terms \eqref{eq:remaining_terms} are constant, we may choose on any given fibre \(\X_\tau\) for \(\tau\neq0\) the representative \(\Ric(\Omega_\tau)+\Delta m_T\), where \(\Delta\) is the Laplacian of \(\Omega_\tau\). The terms \eqref{eq:remaining_terms} then reduce to \begin{align*}
		&\frac{1}{n!} \int_{\X_\tau}\langle m_T,\beta\rangle((\Delta m_T^\xi) \widetilde{v}'(m_T^\xi) + \Delta(\widetilde{v}(m_T^\xi))+\widetilde{v}(m_T^\xi)S(\Omega_\tau))\Omega_\tau^n \\
		=&\frac{1}{n!}\int_{\X_\tau}\langle m_T,\beta\rangle S_v(\Omega_\tau)\Omega_\tau^n.
	\end{align*} By Lemma \ref{lem:independence}, this is a constant function on \(\mathbb{C}^*\). By continuity, the terms \eqref{eq:remaining_terms} are therefore constant on \(\mathbb{C}\), and equal to \[\frac{1}{n!}\int_X h_\beta S_v(\omega)\omega^n=\frac{1}{n!}\int_X  S_{v,w}(\omega)h_\beta w(\mu)\omega^n.\]
	
	We now substitute these equalities into the formula for the weighted Donaldson--Futaki invariant: \begin{align*}
		\DF_{v, w}(\X', \A'_T) :=& \left[-(c_1(\X' / \mathbb{P}^1)_T f(\A'_T))+\frac{(c_1(X)_T f'(\alpha_T))}{(g'(\alpha_T))}(g(\A'_T))\right](\xi) \\
		=&-(c_1(\X/\mathbb{P}^1)_T f(\mathcal{A}_T))(\xi)-\frac{1}{n!}\int_X S_{v,w}(\omega)h_\beta w(\mu)\omega^n \\
		&+\frac{(c_1(X)_T f'(\alpha_T))}{(g'(\alpha_T))} \left(g(\A_T)(\xi)+\frac{1}{n!}\int_X h_\beta w(\mu)\omega^n\right) \\
		=& \DF_{v, w}(\X, \A_T) + \frac{1}{n!}\int_X(\hat{S}_{v,w}-S_{v,w}(\omega))h_\beta w(\mu) \omega^n\\
		=&\DF_{v, w}(\X, \A_T) + F_{v,w}(\beta).
	\end{align*} Here we have used Lemma \ref{lem:average_weighted_scalar_curvature}, and the proof is complete.
\end{proof}

\begin{remark}
	In \cite[Proposition 4.3]{Der18}, the corresponding result in the unweighted case was proved using asymptotics of the Mabuchi functional; the same proof works in this setting with the weighted Mabuchi functional.
\end{remark}

\begin{definition}[{\cite[Definition 1.8]{Ino20}}]
	A \(T\)-equivariant K{\"a}hler manifold \((X,\alpha_T)\) is: \begin{enumerate}
		\item \emph{weighted K-semistable} if \(\DF_{v,w}(\X,\A_T,\beta)\geq0\) for all \(T\)-equivariant normal K{\"a}hler test configurations \((\X,\A_T)\) for \((X,\alpha_T)\) and all elements \(\beta\in\mathfrak{t}\),
		\item \emph{weighted K-polystable} if it is weighted K-semistable, and \(\DF_{v,w}(\X,\A_T,\beta)=0\) only if \((\X,\A_T)\) is isomorphic to a \(T\)-equivariant product test configuration,
		\item \emph{weighted K-stable} if it is weighted K-polystable, and \(\mathrm{Aut}_0(X)^T=T^{\mathbb{C}}\), where \(\mathrm{Aut}_0(X)^T\) is the group of \(T\)-equivariant reduced automorphisms of \(X\).
	\end{enumerate}
\end{definition}

Note that if \(\beta\) is a rational element of \(\mathfrak{t}\) and \(k\in\mathbb{N}\) is such that \(k\beta\) is integral, then \begin{align*}
	\DF_{v,w}(\X,\A_T,\beta) :=&\DF_{v,w}(\X,\A_T)+F_{v,w}(\beta) \\
	=& \frac{1}{k}(k\DF_{v,w}(\X,\A_T)+kF_{v,w}(\beta)) \\
	\geq& \frac{1}{k}(\DF_{v,w}(\rho_k^*(\X,\A_T))+ F_{v,w}(k\beta)),
\end{align*} where \(\rho_k\) is the map \(\mathbb{P}^1\to\mathbb{P}^1\), \(z\mapsto z^k\), and we write \(\rho_k^*(\X,\A_T)\) for the normalised pulled back test configuration under \(\rho_k\); see \cite[Proposition 7.14]{BHJ19} for the final inequality in the unweighted case, the weighted case is proved entirely similarly by introducing a weighted analogue of the non-Archimedean Mabuchi functional: \[M^{\mathrm{NA}}_{v,w}(\X,\A_T) := \DF_{v,w}(\X,\A_T) + \frac{((\X_{0,\mathrm{red}}-\X_0)_T g'(\A_T))(\xi)}{(g'(\alpha_T))(\xi)}.\] By Lemma \ref{lem:integral_twist}, the bottom line \(\frac{1}{k}(\DF_{v,w}(\rho_k^*(\X,\A_T))+ F_{v,w}(k\beta))\) is \(\frac{1}{k}\) times the weighted Donaldson--Futaki invariant of the test configuration \(\rho_k^*(\X,\A_T)\) twisted by \(k\beta\). 

By approximating an irrational \(\beta\) by rational elements of \(\mathfrak{t}\), we can therefore deduce:

\begin{lemma}
	\((X,\alpha_T)\) is weighted K-semistable if and only if \(\DF_{v,w}(\X,\A_T)\geq0\) for all test configurations \((\X,\A_T)\) for \((X,\alpha_T)\). 
\end{lemma}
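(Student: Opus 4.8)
The plan is to prove the two implications separately. The forward implication is immediate: if \(\DF_{v,w}(\X,\A_T,\beta)\geq0\) for all test configurations and all \(\beta\in\mathfrak{t}\), then specialising to \(\beta=0\) gives \(\DF_{v,w}(\X,\A_T)=\DF_{v,w}(\X,\A_T,0)\geq0\), since \(F_{v,w}(0)=0\) by \eqref{eq:weighted_Futaki}. The content is therefore in the reverse implication, where I assume \(\DF_{v,w}(\X,\A_T)\geq0\) for every test configuration and must deduce \(\DF_{v,w}(\X,\A_T)+F_{v,w}(\beta)\geq0\) for every \(\beta\in\mathfrak{t}\). I would handle this in three stages, according to whether \(\beta\) is integral, rational, or arbitrary.

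First I would dispose of the integral case. If \(\beta\) lies in the lattice of \(\mathfrak{t}\), then it defines a one-parameter subgroup \(\beta:\mathbb{C}^*\to T^{\mathbb{C}}\), and twisting \((\X,\A_T)\) by \(\beta\) produces an honest test configuration \((\X',\A_T')\) for \((X,\alpha_T)\). By Lemma \ref{lem:integral_twist}, \(\DF_{v,w}(\X',\A_T')=\DF_{v,w}(\X,\A_T)+F_{v,w}(\beta)=\DF_{v,w}(\X,\A_T,\beta)\), so the hypothesis applied to \((\X',\A_T')\) gives \(\DF_{v,w}(\X,\A_T,\beta)\geq0\).

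Next I would treat rational \(\beta\) by reducing to the integral case via base change. Choosing \(k\in\mathbb{N}\) with \(k\beta\) integral, linearity of \(F_{v,w}\) gives \(kF_{v,w}(\beta)=F_{v,w}(k\beta)\), and the chain of inequalities preceding the statement yields
\[\DF_{v,w}(\X,\A_T,\beta)\;\geq\;\tfrac{1}{k}\big(\DF_{v,w}(\rho_k^*(\X,\A_T))+F_{v,w}(k\beta)\big).\]
By Lemma \ref{lem:integral_twist} the right-hand side is \(\tfrac{1}{k}\) times the weighted Donaldson--Futaki invariant of the test configuration \(\rho_k^*(\X,\A_T)\) twisted by the integral element \(k\beta\); this is non-negative by the hypothesis, so \(\DF_{v,w}(\X,\A_T,\beta)\geq0\) for rational \(\beta\).

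Finally, for arbitrary \(\beta\in\mathfrak{t}\), I would invoke continuity and density. Since \(F_{v,w}\) is a linear functional on \(\mathfrak{t}\) by \eqref{eq:weighted_Futaki}, the map \(\beta\mapsto\DF_{v,w}(\X,\A_T)+F_{v,w}(\beta)\) is affine, hence continuous; as it is non-negative on the dense set of rational points of \(\mathfrak{t}\), it is non-negative everywhere. The main obstacle is the rational step, and specifically the estimate \(k\,\DF_{v,w}(\X,\A_T)\geq\DF_{v,w}(\rho_k^*(\X,\A_T))\) underlying the displayed inequality: this is where base change by \(\rho_k\) can create or worsen singularities in the central fibre, and its weighted proof requires the non-negativity of the correction term in the weighted non-Archimedean Mabuchi functional \(M^{\mathrm{NA}}_{v,w}\). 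Everything else is a direct application of Lemma \ref{lem:integral_twist} together with the linearity of \(F_{v,w}\).
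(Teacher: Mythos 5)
Your proposal is correct and follows essentially the same route as the paper: the rational case is handled by the base change inequality \(k\,\DF_{v,w}(\X,\A_T)+kF_{v,w}(\beta)\geq \DF_{v,w}(\rho_k^*(\X,\A_T))+F_{v,w}(k\beta)\) combined with Lemma \ref{lem:integral_twist} to recognise the right-hand side as the invariant of an honest (twisted, base-changed) test configuration, and the irrational case follows by approximating \(\beta\) by rational elements using linearity of \(F_{v,w}\). Your explicit separation of the integral case and the remark that the forward direction is just \(\beta=0\) are harmless refinements of the same argument.
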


That is, to test weighted K-semistability one need not consider twists by elements of the Lie algebra of \(T\).
			
\subsection{Relative weighted K-(semi/poly)stability}

In this section we introduce notions of relative weighted K-(semi/poly)stability. In the non-weighted setting, relative K-stability was initially introduced by Sz{\'e}kelyhidi \cite{Sze07} for algebraic varieties, and later extended to the K{\"a}hler setting by Dervan \cite{Der18}. 

Although relative weighted K-semistability does not appear to be explicitly defined in Lahdili's work, it is actually encompassed by the definition of weighted K-semistability in \cite{Lah19,Lah20b}. The reason for the difference in terminology here is that in Lahdili's work, the weight function \(w\) is permitted to take non-positive values. Thus, in \cite{Lah19,Lah20b}, a weighted extremal metric is a special example of a weighted cscK metric. 

Here we will use different terminology, restricting the weight function \(w\) to be always positive, and explicitly separating out the cscK case from the extremal case. Furthermore, we will give a slightly different definition of relative weighted K-semistability to what is in \cite{Lah19,Lah20b}, more along the lines of \cite{Sze07, Der18}.

To define relative weighted K-stability, we first introduce weighted inner products determined by test configurations. Let \((\X,\A_T)\) be a test configuration, and let \(\zeta_1, \zeta_2\in\mathrm{Lie}(T\times S^1)\). Denote by \(h_1\) and \(h_2\) the hamiltonian functions on \(\X\) of \(\zeta_1\) and \(\zeta_2\) constructed in Lemma \ref{lem:hamiltonians}, and let \(\hat{h}_1\) and \(\hat{h}_2\) be their fibrewise averages with respect to the measure \(w(m_T|_{\X_\tau})\Omega_\tau^n\).

\begin{definition}
	The \emph{weighted inner product} on \(\mathrm{Lie}(T\times S^1) = \mathfrak{t}\oplus \mathbb{R}\) determined by \((\X,\A_T)\) is \[\langle\zeta_1,\zeta_2\rangle_{(\X,\A_T)}:=\int_{\X_0}(h_1-\hat{h}_1)(h_2-\hat{h}_2)w(m_0)\Omega_0^n.\]
\end{definition}

It is clear that this is a genuine inner product, i.e. is bilinear and positive definite.

\begin{lemma}\label{lem:inner_product}
	The inner product \(\langle-,-\rangle_{(\X,\A_T)}\) is independent of the choice of equivariant representative \(\Omega+m_T\) for \(\A_T\). Furthermore, when \(\zeta_1,\zeta_2\in\mathfrak{t}\), the inner product is independent of the test configuration itself, and may be computed on \((X,\alpha_T)\).
\end{lemma}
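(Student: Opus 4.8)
The plan is to first collapse the inner product to a difference of weighted moment integrals, and then treat the two assertions separately. Writing $d\nu:=w(m_0)\Omega_0^n$ for the weighted measure on the central fibre, $V:=\int_{\X_0}d\nu$ for its total mass, and recalling $\hat h_i=V^{-1}\int_{\X_0}h_i\,d\nu$, a direct expansion of $(h_1-\hat h_1)(h_2-\hat h_2)$ gives
\[
\langle\zeta_1,\zeta_2\rangle_{(\X,\A_T)}=\int_{\X_0}h_1 h_2\,d\nu-\frac{1}{V}\Big(\int_{\X_0}h_1\,d\nu\Big)\Big(\int_{\X_0}h_2\,d\nu\Big).
\]
All dependence on the representative and on the test configuration is thereby concentrated in integrals of the shape $\int_{\X_0}(\cdots)\,d\nu$.

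For independence of the representative I would argue directly on the central fibre. Any two equivariant representatives of $\A_T$ are joined by a linear path $\Omega_s=\Omega+s\,i\d\db\phi$ with $\phi$ a $T\times S^1$-invariant potential, the moment map $m_T$ being fixed along the way by the normalisation of Lemma \ref{lem:moment_map_normalisation}. Under such a variation $\dot\Omega_0=i\d\db\phi$, while $\dot{w(m_0)}=\sum_a w_{,a}(m_0)\,d^c\phi(\xi_a)$, and, since the hamiltonian of a fixed vector field changes by $d^c\phi(\zeta_i)$, also $\dot h_i=d^c\phi(\zeta_i)$ up to an additive constant that cancels once the fibrewise average is subtracted. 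Differentiating the displayed expression at $s=0$ and integrating by parts on the regular locus $(\X_0)_{\mathrm{reg}}$ — using the moment map identity $dh_i=\Omega_0(\,\cdot\,,\zeta_i)$ from Corollary \ref{cor:moment_map} together with $\iota_{\zeta_i}(i\d\db\phi)=-d\big(d^c\phi(\zeta_i)\big)$ for invariant $\phi$ — the terms cancel exactly as in the proofs of Lemmas \ref{lem:independence} and \ref{lem:integral_twist}; the singular locus, being a proper analytic subset, contributes no boundary term. As the space of invariant representatives is connected, this gives independence of the representative, valid for all $\zeta_1,\zeta_2\in\mathfrak t\oplus\mathbb R$.

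For the second assertion, suppose $\zeta_1,\zeta_2\in\mathfrak t$, so that $h_i=\ell_{\xi_i}(m_T)$ depends only on $m_T$. Then each of the three integrals above has integrand $F(m_0)\,\Omega_0^n$ with $F:P\to\mathbb R$ smooth, namely $F\in\{\ell_{\xi_1}\ell_{\xi_2}w,\ \ell_{\xi_i}w,\ w\}$. Lemma \ref{lem:independence} gives $\int_{\X_\tau}F(m_\tau)\Omega_\tau^n=\int_X F(\mu)\omega^n$ for every $\tau\neq 0$, and I would extend this to $\tau=0$ by continuity of the fibre integrals across the flat family $\X\to\mathbb P^1$, the integrand $F(m_T)\Omega^n$ being a continuous form on $\X$. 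Substituting, the inner product becomes
\[
\int_X \ell_{\xi_1}(\mu)\ell_{\xi_2}(\mu)w(\mu)\,\omega^n-\frac{1}{\int_X w(\mu)\omega^n}\Big(\int_X \ell_{\xi_1}(\mu)w(\mu)\omega^n\Big)\Big(\int_X \ell_{\xi_2}(\mu)w(\mu)\omega^n\Big),
\]
which manifestly depends only on $(X,\alpha_T)$ and coincides with the inner product computed there (e.g.\ on the product configuration, whose central fibre is $X$).

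The step I expect to be the main obstacle is the integration-by-parts cancellation establishing independence of the representative, since it must be carried out on the possibly singular central fibre and relies on the moment map identities holding on $(\X_0)_{\mathrm{reg}}$; the delicate companion point is the passage to $\tau=0$ in the second part, where one must check that the weighted moments extend continuously to the (singular, possibly non-reduced) central fibre.
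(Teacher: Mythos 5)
Your reduction of the inner product to the three integrals \(\int_{\X_0}h_1h_2\,w(m_0)\Omega_0^n\), \(\int_{\X_0}h_i\,w(m_0)\Omega_0^n\), \(\int_{\X_0}w(m_0)\Omega_0^n\), and your treatment of the case \(\zeta_1,\zeta_2\in\mathfrak{t}\) (Lemma \ref{lem:independence} applied on the fibres \(\X_\tau\), \(\tau\neq 0\), then continuity of the fibre integrals as \(\tau\to 0\)) coincide with the paper's argument for the second assertion. The gap is in your proof of representative-independence, and it sits exactly where you predicted. A minor point first: the moment map is \emph{not} ``fixed along the way''; the moment map of \(\Omega+s\,i\d\db\phi\) is \(m+s\,d^c\phi\) (Lemma \ref{lem:moment_map_normalisation} only pins down additive constants), and your own variation formulas \(\dot h_i=d^c\phi(\zeta_i)\) and \(\dot{\overline{w(m_0)}}=\sum_a w_{,a}(m_0)\,d^c\phi(\xi_a)\) already presuppose this, so the parenthetical claim contradicts the computation that follows it.

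The substantive problem is the integration by parts on \(\X_0\). The central fibre is a compact analytic space that may be non-normal and, crucially, \emph{non-reduced}, sitting inside the singular space \(\X\). Asserting that ``the singular locus, being a proper analytic subset, contributes no boundary term'' is precisely the statement that needs proof: Stokes' theorem on such a space is not automatic (it amounts to closedness of integration currents in local embeddings, and the ambient space here is itself singular), and when \(\X_0=\sum_j a_j Y_j\) has components with multiplicities, the integral \(\int_{\X_0}\) means \(\sum_j a_j\int_{Y_j}\), so a computation carried out on ``the regular locus \((\X_0)_{\mathrm{reg}}\)'' of the reduction is not even computing the correct quantity. Note also that the moment map identity of Corollary \ref{cor:moment_map} is only guaranteed on \(\X_{\mathrm{reg}}\cap\X_0\), which need not agree with \((\X_0)_{\mathrm{reg}}\). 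The paper fills exactly this hole differently: it passes to a \(T^{\mathbb{C}}\times\mathbb{C}^*\)-equivariant resolution \(\widetilde{\X}\to\X\) whose central fibre is a simple normal crossings divisor, writes
\[
\int_{\X_0}h(m_{T\times S^1})\Omega_0^n=\sum_{j=1}^N a_j\int_{\mathcal{Y}_j}h(\widetilde m_{T\times S^1})\widetilde\Omega_0^n
\]
over the smooth reduced components \(\mathcal{Y}_j\) with multiplicities \(a_j\), and then invokes representative-independence of such equivariant integrals on each smooth compact \(\mathcal{Y}_j\) --- which is your path-differentiation argument, but run in the setting where it is unproblematic. So your variational mechanism is the right one, but to be a proof it must be executed after resolving, component by component, rather than directly on \(\X_0\).
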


Hence, when we write the inner product of elements of \(\mathfrak{t}\), we shall omit the test configuration from the notation.

\begin{proof}
	First assume \(\zeta_1,\zeta_2\in\mathfrak{t}\). By Lemma \ref{lem:independence}, the integral of \((h_1-\hat{h}_1)(h_2-\hat{h}_2)w(m_\tau)\Omega_\tau^n\) over the general fibre \(\X_\tau\) of \(\X\) is independent of \(\tau\in\mathbb{C}^*\), and is equal to the integral of \((h'_1-\hat{h}'_1)(h'_2-\hat{h}'_2)w(\mu)\omega^n\) over \(X\), where \(h_j'\) is the hamiltonian for \(\zeta_j\) on \((X,\omega)\). Since \[\int_{\X_0}(h_1-\hat{h}_1)(h_2-\hat{h}_2)w(m_0)\Omega_0^n = \lim_{\tau\to0}\int_{\X_\tau}(h_1-\hat{h}_1)(h_2-\hat{h}_2)w(m_\tau)\Omega_\tau^n,\] we have \[\langle\zeta_1,\zeta_2\rangle_{(\X,\A_T)} = \int_X(h_1'-\hat{h}'_1)(h_2'-\hat{h}_2')w(\mu)\omega^n,\] and the inner product depends only on \((X,\alpha_T)\).
	
	In the case where \(\zeta_1,\zeta_2\) may also take values in \(\mathrm{Lie}(S^1)\), we can only compute the integral over \(\X_0\). In this case, the integral takes the form \[\int_{\X_0}h(m_{T\times S^1})\Omega_0^n,\] where \(h\) is a function on the moment polytope of \(T\times S^1\). Let \(\widetilde{\X}\to\X\) be a \(T^{\mathbb{C}}\times\mathbb{C}^*\)-equivariant resolution of singularities so that the central fibre \(\widetilde{\X}_0\) is a simple normal crossings divisor. The integral can then be computed as \[\int_{\widetilde{\X}_0}h(\widetilde{m}_{T\times S^1})\widetilde{\Omega}_0^n = \sum_{j = 1}^N a_j \int_{\mathcal{Y}_j}h(\widetilde{m}_{T\times S^1})\widetilde{\Omega}_0^n,\] where \(\mathcal{Y}_1,\ldots,\mathcal{Y}_N\) denote the reduced components of central fibre \(\widetilde{\X}_0\), which have multiplicities \(a_1,\ldots,a_N\) respectively. Each \(\mathcal{Y}_j\) is a smooth manifold, and the integral over \(\mathcal{Y}_j\) is equivariant cohomological so is independent of the choice of representative for the equivariant cohomology class \([\widetilde{\Omega}+\widetilde{m}]|_{\mathcal{Y}_j}\). Hence the original integral does not depend on the choice of representative \([\Omega+m]\) for \(\mathcal{A}_T\).
\end{proof}

We now introduce the weighted \(L^1\)-norm of a test configuration, although we will not need this until later.

\begin{definition}
	Let \((\X,\A_T)\) be a test configuration for \((X,\alpha_T)\), and let \(\lambda\) denote the \(\mathbb{C}^*\)-action of the test configuration, with hamiltonian function \(h_\lambda\). Then the \emph{weighted \(L^1\)-norm} of the test configuration is \[\|(\X,\A_T)\|_1^w := \int_{\X_0}|h_\lambda - \hat{h}_\lambda| w(m_0) \Omega_0^n.\]
\end{definition}

Similar to the inner product, one may prove the norm is independent of the choice of equivariant representative for \(\A_T\).

\begin{definition}
	Let \((X,\alpha_T)\) be a \(T\)-equivariant K{\"a}hler manifold, and let \((\X,\A_T)\) be a \(T\)-equivariant K{\"a}hler test configuration for \((X,\alpha_T)\). Denote by \(\lambda\) the generator of the \(\mathbb{C}^*\)-action of the test configuration, and let \(\{\beta_j\}_{j=1}^r\) be an orthonormal basis for \(\mathfrak{t}\). We define \[\DF^T_{v,w}(\X,\A_T):=\DF_{v,w}(\X,\A_T)-\sum_{j=1}^r\langle\lambda,\beta_j\rangle_{(\X,\A_T)}F_{v,w}(\beta_j).\]
\end{definition} 

This is easily observed to be independent of the choice of orthonormal basis for \(\mathfrak{t}\). Notice the summation is equal to the weighted Futaki invariant of the orthogonal projection of \(\lambda\) onto \(\mathfrak{t}\) with respect to the inner product \(\langle-,-\rangle_{(\X,\A_T)}\). Thus we can consider this as the Donaldson--Futaki invariant of a test configuration ``orthogonal to \(\mathfrak{t}\)". In fact, letting \(\beta := \sum_{j=1}^r \langle \lambda, \beta_i \rangle_{(\X, \A_T)} \beta_i\), we write \((\X,\A_T)^\perp := (\X,\A_T, -\beta)\), and then \[\DF_{v,w}(\X,\A_T)^\perp = \DF_{v,w}(\X,\A_T,-\beta) = \DF_{v,w}^T(\X,\A_T).\]

\begin{remark}\label{rem:no_beta_needed}
	Given an element \(\beta\in\mathfrak{t}\) we could also define \[\DF^T_{v,w}(\X,\A_T,\beta):=\DF_{v,w}(\X,\A_T,\beta)-\sum_{i=1}^r\langle\lambda+\beta,\beta_i\rangle_{(\X,\A_T)}F_{v,w}(\beta_i),\] however this is equal to \(\DF_{v,w}^T(\X,\A_T)\), by bilinearity of the inner product and linearity of \(F_{v,w}\) on \(\mathfrak{t}\). Thus, \(\DF^T_{v,w}\) is the ``twist invariant" version of \(\DF_{v,w}\).
\end{remark}

\begin{definition}\label{def:weighted_K-stability}
	A \(T\)-equivariant K{\"a}hler manifold \((X,\alpha_T)\) is: \begin{enumerate}
		\item \emph{relatively weighted K-semistable} if \(\DF^T_{v,w}(\X,\A_T)\geq0\) for all test configurations \((\X,\A_T)\) for \((X,\alpha_T)\),
		\item \emph{relatively weighted K-polystable} if it is relatively weighted K-semistable, and \(\DF_{v,w}^T(\X,\A_T)=0\) only if \((\X,\A_T)\) is isomorphic to a product test configuration,
		\item \emph{relatively weighted K-stable} if it is relatively weighted K-polystable, and \(\Aut_0(X)^T=T^{\mathbb{C}}\).
	\end{enumerate}
\end{definition}

\begin{remark}\label{rem:positive_norm}
	For the definition of relative weighted K-polystability, an equivalent condition to being a product test configuration is that the usual \(L^1\)-norm of the projection of the test configuration orthogonal to the torus is positive, by \cite[Appendix]{SD20}. This is further equivalent to positivity of the weighted \(L^1\)-norm. In particular, we will write \(\|(\X,\A_T)^\perp\|_1^w > 0\) in this situation, where \begin{equation}\label{eq:weighted_orthogonal_projection_norm}
	\|(\X,\A_T)^\perp\|_1^w := \int_{\X_0}|(h_\lambda-h_\beta) - (\hat{h}_\lambda - \hat{h}_\beta)|w(m_0) \Omega_0^n,
	\end{equation} where \(\beta\) denotes the orthogonal projection of \(\lambda\) to \(\mathfrak{t}\) via the inner product \(\langle-,-\rangle_{(\X,\A_T)}\) on \(\mathrm{Lie}(S^1\times T)\).
\end{remark}

We remarked that Lahdili gave another definition of relative weighted K-semistability, which we will now cover. By \cite[Section 3.2]{Lah19}, there exists a unique candidate \(\chi\in\mathfrak{t}\) for the weighted extremal field, regardless of whether a weighted extremal metric exists. This is obtained by taking the \(L^2\)-orthogonal projection of \(S_{v,w}(\omega)\) onto the hamiltonian generators of \(\mathfrak{t}\), with respect to the inner product defined by the measure \(\frac{1}{n!}w(\mu)\omega^n\); this projection generates a unique \(\chi\in\mathfrak{t}\), independent of the choice of representative \(\omega+\mu \in \alpha_T\). The weighted extremal equation is then \[S_{v,w}(\omega)=\mu^\chi+a\] for a suitable constant \(a\in\mathbb{R}\) depending only on the equivariant cohomology class \([\omega+\mu]\), the weights \(v,w\), and \(\chi\in\mathfrak{t}\). From this observation, one can think of a weighted extremal metric as a solution to the equation \[S_v(\omega)=w(\mu)w_{\mathrm{ext}}(\mu),\] where \(w_{\mathrm{ext}}(p) := \langle p,\chi\rangle + a\). In particular, we may define \(w':=ww_{\mathrm{ext}}\). Even though \(w'\) is not positive, we may still define \(\DF_{v,w'}(\X,\A_T)\) when \(\X\) is smooth via the formula \begin{align}\label{eq:DF_not_positive}
	\DF_{v,w'}(\X,\A_T) :=&-\frac{1}{(n+1)!}\int_{\X}\left(S_v(\Omega)-\hat{S}_{v,w'}w'(m_T)\right)\Omega^{n+1} \\
	&+\frac{2}{n!}\int_{\X}v(m_T)\pi^*\omega_{\mathrm{FS}}\wedge\Omega^n,\nonumber
\end{align} where \(\omega_{\mathrm{FS}}\) denotes the Fubini--Study metric on \(\mathbb{P}^1\) \cite[Definition 11]{Lah19}.\footnote{In the case where \(\int_Xw'(\mu)\omega^n = 0\), we simply replace \(\hat{S}_{v,w'}\) with \(1\) in the definition of \(\DF_{v,w'}(\X,\A_T)\); see \cite[Definitions 3, 11]{Lah19}.}

Lahdili then shows that a weighted extremal manifold satisfies \[\DF_{v,w'}(\X,\A_T)\geq0\] for all smooth test configurations \((\X,\A_T)\). We shall show that this result is equivalent to relative weighted K-semistability of weighted extremal manifolds, in the sense of Definition \ref{def:weighted_K-stability}:

\begin{theorem}[{\cite[Corollary 2]{Lah20b}}]\label{thm:weighted_semistable}
	Let \((X,\alpha_T)\) be a weighted extremal manifold. Then \((X,\alpha_T)\) is relatively weighted K-semistable.
\end{theorem}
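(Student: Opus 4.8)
The plan is to prove that the relative weighted Donaldson--Futaki invariant of Definition \ref{def:weighted_K-stability} coincides with Lahdili's invariant from \eqref{eq:DF_not_positive}, that is \(\DF^T_{v,w}(\X,\A_T)=\DF_{v,w'}(\X,\A_T)\), where \(w'=w\,w_{\mathrm{ext}}\) and \(w_{\mathrm{ext}}(p)=\langle p,\chi\rangle+a\). Granting this identity the theorem is immediate: a weighted extremal metric satisfies \(S_v(\omega)=w'(\mu)\), so it is a \((v,w')\)-cscK metric with average \(\hat{S}_{v,w'}=1\), and Lahdili's semistability result for \((v,w')\)-cscK manifolds (extended to singular total spaces by Inoue) yields \(\DF_{v,w'}(\X,\A_T)\geq 0\) for every test configuration, whence \(\DF^T_{v,w}(\X,\A_T)\geq 0\).

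To set up the identity I would first record the elementary fact that the extremal field is the weighted-inner-product dual of the Futaki invariant restricted to \(\mathfrak{t}\). From the definition \eqref{eq:weighted_Futaki} of \(F_{v,w}\), together with the characterisation of \(\chi\) as the \(L^2(w(\mu)\omega^n)\)-projection of \(S_{v,w}(\omega)\) onto affine functions of the moment map (so that the orthogonal part pairs trivially against the linear hamiltonians \(h_\beta=\mu^\beta\)), one obtains \(F_{v,w}(\beta)=-\tfrac{1}{n!}\langle\chi,\beta\rangle\) for all \(\beta\in\mathfrak{t}\), independently of the chosen metric. Writing \(\beta_\lambda\) for the orthogonal projection of the generator \(\lambda\) onto \(\mathfrak{t}\) and using that \(\chi\in\mathfrak{t}\) while \(\lambda-\beta_\lambda\) is \(\langle-,-\rangle_{(\X,\A_T)}\)-orthogonal to \(\mathfrak{t}\), this gives \(F_{v,w}(\beta_\lambda)=-\tfrac{1}{n!}\langle\chi,\lambda\rangle_{(\X,\A_T)}\). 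Since \(\DF^T_{v,w}(\X,\A_T)=\DF_{v,w}(\X,\A_T)-F_{v,w}(\beta_\lambda)\) by definition, it remains only to show \(\DF_{v,w'}(\X,\A_T)-\DF_{v,w}(\X,\A_T)=-F_{v,w}(\beta_\lambda)\).

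For this last point I would, for smooth \(\X\), expand \(\DF_{v,w'}\) using \(w'=w\,w_{\mathrm{ext}}\), the cohomological identity \(\hat{S}_{v,w'}=1\), and \(\hat{S}_{v,w}=\hat{\mu}^\chi+a\) (the weighted average of \(S_{v,w}(\omega)=\mu^\chi+a\)); the two invariants then differ only by the linear-in-moment-map contribution \(\tfrac{1}{(n+1)!}\int_\X w(m_T)\big(\langle m_T,\chi\rangle-\hat{\mu}^\chi\big)\Omega^{n+1}\) of the extra factor \(w_{\mathrm{ext}}\). The crux, and the step I expect to be hardest, is to identify this total-space integral with the central-fibre quantity \(\tfrac{1}{n!}\langle\chi,\lambda\rangle_{(\X,\A_T)}=-F_{v,w}(\beta_\lambda)\). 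I would prove this by the slope technique of Lemmas \ref{lem:independence} and \ref{lem:integral_twist}: writing the integral fibrewise over \(\mathbb{C}^*\), differentiating in \(t=-\log|\tau|\) with the formulas for \(\tfrac{d}{dt}\omega_\tau\) and \(\tfrac{d}{dt}\mu_\tau\), and matching the limiting boundary contribution at \(\X_0\) to the pairing of \(\chi\) against the \(S^1\)-hamiltonian \(h_\lambda=-\dot{\varphi}_0/2\). Equivalently, one can view the passage from \(w\) to \(w'\) as a twist by \(-\beta_\lambda\) at the level of the equivariant intersection formula, so that the change is governed directly by Lemma \ref{lem:integral_twist}.

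Finally, the identity \(\DF^T_{v,w}=\DF_{v,w'}\) extends to singular \((\X,\A_T)\) after passing to an equivariant resolution, since both sides are resolution-independent by Proposition \ref{prop:convergence} and Lemma \ref{lem:inner_product} and the correction integral is unaffected away from a subvariety of codimension at least two (Lemma \ref{lem:projection_formula}); the nonnegativity of \(\DF_{v,w'}\) in this generality is exactly Inoue's continuity extension of Lahdili's theorem. Thus the only genuine work is the total-space-to-central-fibre identity above, everything else being either formal or a direct appeal to the cited results.
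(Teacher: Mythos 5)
Your treatment of smooth test configurations is essentially the paper's own argument: the same identity \(\DF^T_{v,w}(\X,\A_T)=\DF_{v,w\,w_{\mathrm{ext}}}(\X,\A_T)\), proved from (i) \(F_{v,w}(\beta)=-\langle w_{\mathrm{ext}},\beta\rangle\) (up to normalisation) via the \(L^2(w(\mu)\omega^n)\)-projection definition of the extremal field, and (ii) the identification of \(\tfrac{1}{(n+1)!}\int_\X\bigl(w_{\mathrm{ext}}(m_T)-\hat S_{v,w}\bigr)w(m_T)\Omega^{n+1}\) with the central-fibre pairing \(\langle\lambda,w_{\mathrm{ext}}\rangle_{(\X,\A_T)}\). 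For (ii) the paper simply cites Lahdili's asymptotic slope formula for \(\E_u\) (\cite[Lemma 9]{Lah19}) rather than re-deriving it fibrewise as you propose; that is the same computation, so no complaint there.

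The genuine gap is your final paragraph, the passage to singular test configurations. You propose to extend the identity \(\DF^T_{v,w}=\DF_{v,w'}\) to singular \(\X\) by passing to an equivariant resolution and invoking resolution-independence, and then to quote ``Inoue's continuity extension'' for nonnegativity of \(\DF_{v,w'}\). This fails for two concrete reasons. First, \(\DF_{v,w'}\) is only defined by formula \eqref{eq:DF_not_positive} when the total space is smooth and \(\Omega\) is a genuine K{\"a}hler form: on a resolution \(\widetilde\X\to\X\) the pulled-back class \(\widetilde\A_T\) is degenerate along the exceptional divisor, so \(S_v(\widetilde\Omega)\) makes no sense, Lahdili's formula does not apply, and \((\widetilde\X,\widetilde\A_T)\) is not even a test configuration — so there is no ``right-hand side'' whose resolution-independence you could assert. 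Second, you cannot fall back on Inoue's intersection-theoretic machinery (Proposition \ref{prop:convergence}) to define \(\DF_{v,w'}\) for singular \(\X\): the weight \(w'=w\,w_{\mathrm{ext}}\) involves the linear functional of the extremal field \(\chi\), which is in general \emph{not} proportional to the \(\xi\) through which \(v\) and \(w\) factor, so \(w'\) violates Assumption \ref{ass:weights} and none of the convergence or resolution-independence statements in the paper apply to it. Finally, appealing to Inoue's extension of Lahdili's theorem to singular degenerations would require matching Inoue's relative invariant with \(\DF_{v,w'}\) (or with \(\DF^T_{v,w}\)) on singular test configurations, which is exactly the sort of definitional identification under proof, not something you may assume.

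The paper's proof is designed precisely to avoid ever defining \(\DF_{v,w'}\) on a singular test configuration: given singular \((\X,\A_T)\), one takes an equivariant resolution \(\widetilde\X\to\X\) with exceptional divisor \(E_T\) and considers \((\widetilde\X,\widetilde\A_T-\epsilon E_T)\), which \emph{is} a smooth K{\"a}hler test configuration for all small \(\epsilon>0\); the smooth-case identity plus Lahdili's theorem give \(\DF^T_{v,w}(\widetilde\X,\widetilde\A_T-\epsilon E_T)\geq0\), and one then sends \(\epsilon\to0\), using only that \(\DF^T_{v,w}\) — which \emph{is} defined for singular test configurations, since the genuine weight \(w\) satisfies Assumption \ref{ass:weights} — depends continuously (indeed polynomially) on \(\epsilon\). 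Replacing your last paragraph by this perturbation argument repairs the proof; as written, the singular case is unproved.
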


To prove this, we first show the following:

\begin{lemma}
	Let \((X,\A_T)\) be a smooth test configuration. Then \[\DF_{v,ww_{\mathrm{ext}}}(\X,\A_T) = \DF^T_{v,w}(\X,\A_T),\] where \(\DF_{v,ww_{\ext}}(\X,\A_T)\) is defined in equation \ref{eq:DF_not_positive}.
\end{lemma}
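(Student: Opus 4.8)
The plan is to evaluate both invariants by Lahdili's integral formula \eqref{eq:DF_not_positive}. This is legitimate: \(\X\) is smooth, and by the remark following the definition of the weighted Donaldson--Futaki invariant, the invariant \(\DF_{v,w}(\X,\A_T)\) entering \(\DF^T_{v,w}\) agrees with \eqref{eq:DF_not_positive} for the positive weight \(w\). The two formulas differ only in the term \(\hat S_{v,w}w(m_T)\) versus \(\hat S_{v,w'}w'(m_T)\), the pieces \(\int_\X S_v(\Omega)\Omega^{n+1}\) and \(\int_\X v(m_T)\pi^*\omega_{\mathrm{FS}}\wedge\Omega^n\) being weight-independent, so subtracting gives \[\DF_{v,w'}(\X,\A_T)-\DF_{v,w}(\X,\A_T)=\frac{1}{(n+1)!}\int_\X\bigl(\hat S_{v,w'}w'(m_T)-\hat S_{v,w}w(m_T)\bigr)\Omega^{n+1}.\] Since \(\DF^T_{v,w}(\X,\A_T)=\DF_{v,w}(\X,\A_T)-F_{v,w}(\beta)\) with \(\beta:=\sum_j\langle\lambda,\beta_j\rangle_{(\X,\A_T)}\beta_j\) the \(\langle-,-\rangle_{(\X,\A_T)}\)-projection of \(\lambda\) onto \(\mathfrak t\), it remains to identify the right-hand side with \(-F_{v,w}(\beta)\).

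First I would simplify the constants. As \(w_{\mathrm{ext}}=\mu^\chi+a\) is the \(L^2(w(\mu)\omega^n)\)-projection of \(S_{v,w}(\omega)\) onto the affine functions of the moment map, orthogonality to the constant function gives \(\int_Xw'(\mu)\omega^n=\int_Xw_{\mathrm{ext}}(\mu)w(\mu)\omega^n=\int_XS_{v,w}(\omega)w(\mu)\omega^n=\int_XS_v(\omega)\omega^n\), whence \(\hat S_{v,w'}=1\) (the degenerate case \(\int_Xw'(\mu)\omega^n=0\) being covered by the footnote convention). Writing \(\bar\mu^\chi\) for the \(w\)-weighted average of \(\mu^\chi\), the same orthogonality yields \(a-\hat S_{v,w}=-\bar\mu^\chi\), so the integrand collapses to \(w(m_T)(\mu^\chi-\bar\mu^\chi)\), a function of \(m_T\) alone whose fibrewise \(w\)-average vanishes by Lemma \ref{lem:independence}.

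The heart of the argument is a slope formula reducing this total-space integral to the central fibre, \[\frac{1}{(n+1)!}\int_\X w(m_T)(\mu^\chi-\bar\mu^\chi)\Omega^{n+1}=\frac{1}{n!}\int_{\X_0}(h_\lambda-\hat h_\lambda)(\mu^\chi-\bar\mu^\chi)w(m_0)\Omega_0^n=\frac{1}{n!}\langle\lambda,\chi\rangle_{(\X,\A_T)}.\] I would establish this either by integrating the slope of the weighted Mabuchi functional along the geodesic ray associated with \((\X,\A_T)\) (as in the remark after Lemma \ref{lem:integral_twist}), or, more in keeping with the methods here, by equivariant localisation for the \(S^1\)-action on \(\X\) through Proposition \ref{prop:localisation}: the extra factor of \(\Omega\) in the base direction produces the \(\mathbb{C}^*\)-hamiltonian \(m_{S^1}=h_\lambda\). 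The \(S^1\)-fixed locus is \(\X_0\cup\X_\infty\), and since \((\X,\A_T)\) is a product with trivial fibrewise \(T\)-action over \(\mathbb{P}^1\backslash\{0\}\), the \(\X_\infty\)-contribution is a constant multiple of \(\int_X(\mu^\chi-\bar\mu^\chi)w(\mu)\omega^n\), which vanishes by the mean-zero normalisation; this is exactly why subtracting \(\bar\mu^\chi\) is essential, leaving only the \(\X_0\)-term. I expect this slope formula to be the main obstacle, requiring care with the boundary behaviour at \(\infty\) and with the current/convergence subtleties noted after Proposition \ref{prop:localisation}.

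It then remains to identify \(\tfrac{1}{n!}\langle\lambda,\chi\rangle_{(\X,\A_T)}\) with \(-F_{v,w}(\beta)\). Because \(\chi\in\mathfrak t\), the inner product detects only the \(\mathfrak t\)-projection of \(\lambda\), so \(\langle\lambda,\chi\rangle_{(\X,\A_T)}=\langle\beta,\chi\rangle\), which by Lemma \ref{lem:inner_product} may be computed on \((X,\alpha_T)\) as \(\int_X(\mu^\beta-\bar\mu^\beta)(\mu^\chi-\bar\mu^\chi)w(\mu)\omega^n\). Expanding this and invoking the defining orthogonality of \(\chi\) in the complementary direction, namely \(\int_XS_v(\omega)\mu^\zeta\omega^n=\int_Xw'(\mu)\mu^\zeta\omega^n\) for all \(\zeta\in\mathfrak t\), converts the pairing against \(\mu^\chi\) into a pairing against \(S_{v,w}(\omega)\) and produces exactly \(-F_{v,w}(\beta)=\tfrac{1}{n!}\int_X(S_{v,w}(\omega)-\hat S_{v,w})\mu^\beta w(\mu)\omega^n\). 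By linearity of \(F_{v,w}\) on \(\mathfrak t\) this equals \(-\sum_j\langle\lambda,\beta_j\rangle_{(\X,\A_T)}F_{v,w}(\beta_j)\), and assembling the three displays gives \(\DF_{v,w'}(\X,\A_T)=\DF^T_{v,w}(\X,\A_T)\).
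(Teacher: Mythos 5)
Your proposal is correct and follows essentially the same route as the paper's proof: the paper likewise writes both invariants via \eqref{eq:DF_not_positive}, computes the difference \(\DF_{v,ww_{\mathrm{ext}}}(\X,\A_T)-\DF_{v,w}(\X,\A_T)=\tfrac{1}{(n+1)!}\int_{\X}(w_{\mathrm{ext}}(m_T)-\hat S_{v,w})w(m_T)\Omega^{n+1}\), reduces this to the central-fibre pairing \(\langle\lambda,w_{\mathrm{ext}}\rangle_{(\X,\A_T)}\) by Lahdili's asymptotic slope formula for the energy \(\E_u\) (\cite[Lemma 9]{Lah19}) --- precisely the step you flag as the main obstacle --- and identifies the relative correction \(-\sum_j\langle\lambda,\beta_j\rangle_{(\X,\A_T)}F_{v,w}(\beta_j)\) with that same pairing using the defining \(L^2(w(\mu)\omega^n)\)-orthogonality of \(w_{\mathrm{ext}}\), exactly as in your final paragraph. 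The only deviations are cosmetic: the paper invokes the slope of \(\E_u\) (rather than the Mabuchi functional or an equivariant localisation argument), and your explicit bookkeeping of the constants (\(\hat S_{v,w'}=1\), \(a-\hat S_{v,w}=-\bar\mu^\chi\), and the factor \(1/n!\) relating \(F_{v,w}\) to the inner product) is left implicit there.
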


\begin{proof}
	Recall that the extremal field \(w_{\ext}\) is defined by the \(L^2\)-orthogonal projection of \(S_{v,w}(\omega)\) onto the generators of \(\mathfrak{t}\), with respect to the inner product defined by the measure \(\frac{1}{n!}w(\mu)\omega^n\). It follows that the weighted Futaki invariant is given by \begin{align*}
		F_{v,w}(\beta) &= \frac{1}{n!}\int_X(\hat{S}_{v,w}-S_{v,w}(\omega))h_\beta w(\mu)\omega^n \\
		&=-\frac{1}{n!}\int_X (w_{\ext}(\mu)-\hat{S}_{v,w})(h_\beta - \hat{h}_\beta) w(\mu)\omega^n \\
		&=-\langle w_{\mathrm{ext}},\beta\rangle.
	\end{align*} Now, in the definition of \(\DF^T_{v,w}(\X,\A_T)\), we compute the weighted Futaki invariant \(F_{v,w}(\beta_i)\) of a basis element \(\beta_i\). In the proof of Lemma \ref{lem:inner_product} we observed that the inner product of elements of \(\mathfrak{t}\) can be computed on the central fibre of an arbitrary test configuration. Thus, we have \begin{align*}
		\DF_{v,w}^T(\X,\A_T) &= \DF_{v,w}(\X,\A_T) - \sum_j\langle \lambda,\beta_j\rangle_{(\X,\A_T)} F_{v,w}(\beta_j) \\
		&= \DF_{v,w}(\X,\A_T) + \sum_j\langle \lambda,\beta_j\rangle_{(\X,\A_T)} \langle w_{\mathrm{ext}},\beta_j\rangle_{(\X,\A_T)} \\
		&= \DF_{v,w}(\X,\A_T) + \langle \lambda,w_{\mathrm{ext}}\rangle_{(\X,\A_T)},
	\end{align*} where \(\lambda\) generates the \(\mathbb{C}^*\)-action on the test configuration. It remains to show that this final line is equal to \(\DF_{v,ww_{\mathrm{ext}}}(\X,\A_T)\). To see this, we use an asymptotic slope formula due to Lahdili for the functional \(\mathcal{E}_u(\varphi)\) defined by its first variation along smooth paths: \[\frac{d}{dt}\mathcal{E}_u(\varphi_t) := \int_X \dot{\varphi}_t u(\mu_t)\omega_t^n,\] and normalised so that \(\E_u(0)=0\); here \(u:P\to\mathbb{R}\) is an arbitrary smooth function on the moment polytope. On \(X\cong\X_1\) we write \(\omega_t := \omega + i\d\db\varphi_t = \lambda(e^{-t})^*\Omega|_{\X_1}\), and write \(\mu_t\) for the corresponding moment map induced by \(m_T\). By \cite[Lemma 9]{Lah19},  \[\lim_{t\to\infty}\frac{d}{dt}\E_u(\varphi_t)=\frac{1}{(n+1)!}\int_{\X}u(m_T)\Omega^{n+1}.\] In particular, using equation \eqref{eq:DF_not_positive}, \begin{align*}
		\DF_{v,ww_{\mathrm{ext}}}(\X,\A_T) - \DF_{v,w}(\X,\A_T) &= \frac{1}{(n+1)!}\int_{\X}(w_{\mathrm{ext}}(m_T)-\hat{S}_{v,w})w(m_T)\Omega^{n+1} \\
		&= \lim_{t\to\infty}\frac{d}{dt}\E_{(w_{\mathrm{ext}}-\hat{S}_{v,w})w}(\varphi_t) \\
		&= \lim_{t\to\infty}\int_X\dot{\varphi}_t(w_{\ext}(\mu_t)-\hat{S}_{v,w})w(\mu_t)\omega_t^n \\
		&= \int_{\X_0}h_\lambda(w_{\ext}(m_T)-\hat{S}_{v,w})w(m_0)\Omega_0^n \\
		&= \langle \lambda, w_{\mathrm{ext}}\rangle_{(\X,\A_T)}. 
	\end{align*} Thus, \(\DF_{v,w'}(\X,\A_T)\) and \(\DF^T_{v,w}(\X,\A_T)\) are both computed as \(\DF_{v,w}(\X,\A_T)+\langle \lambda, w_{\ext} \rangle_{(\X,\A_T)}\), and we are done.
\end{proof}

\begin{proof}[Proof of Theorem \ref{thm:weighted_semistable}]

Let \((\X,\A_T)\) be a test configuration for \((X,\alpha_T)\). Take an equivariant resolution of singularities \(\widetilde{\X}\to\X\), let \(E_T\subset\widetilde{\X}\) be the exceptional divisor of the resolution, and consider \((\widetilde{\X},\widetilde{\A}_T - \epsilon E_T)\), which is a smooth test configuration for \((X,\alpha_T)\) for all \(\epsilon > 0\) sufficiently small. Sending \(\epsilon\to0\), we observe \[\DF_{v,w}^T(\widetilde{\X},\widetilde{\A}_T - \epsilon E_T) \to \DF^T_{v,w}(\X,\A_T).\] Since \(\DF^T_{v,w}(\widetilde{\X}, \widetilde{\A}_T - \epsilon E_T)\geq 0\) for all \(\epsilon > 0\) sufficiently small, it follows that \(\DF_{v,w}^T(\X,\A_T)\geq0\).

\end{proof}

%%%%%%%%%%%%%%%%%%%%%%%%%%%%%%%%%%%%%%%%%%%%%%%%%%%%%%%%%%%%%%%%%%%%%%%%%%%%%%%%%%%%%%%%%%%%%%%%%%%%%%%%%%%%%%
%%%%%%%%%%%%%%%%%%%%%%%%%%%%%%%%%%%%%%%%%%%%%%%%%%%%%%%%%%%%%%%%%%%%%%%%%%%%%%%%%%%%%%%%%%%%%%%%%%%%%%%%%%%%%%
%%%%%%%%%%%%%%%%%%%%%%%%%%%%%%%%%%%%%%%%%%%%%%%%%%%%%%%%%%%%%%%%%%%%%%%%%%%%%%%%%%%%%%%%%%%%%%%%%%%%%%%%%%%%%%
	
\section{Relative K-polystability of weighted extremal manifolds}

In this section we prove Theorem \ref{thm:main}, that a weighted extremal manifold is relatively weighted K-polystable. The style of argument we use was coined by Stoppa for polarised cscK manifolds with discrete automorphism group \cite{Sto09,Sto11}. It was then generalised to the setting of polarised extremal manifolds by Stoppa--Sz{\'e}kelyhidi \cite{SS11}. In the non-polarised K{\"a}hler setting, the argument was extended by Dervan--Ross to the cscK case \cite{DR17}, then later by Dervan to the extremal case \cite{Der18}.

Before beginning the proof of the main theorem, we first assume it holds to prove Corollary \ref{cor:main}, that a weighted cscK manifold is weighted K-polystable:

\begin{proof}[Proof of Corollary \ref{cor:main}]
	Let \((X,\alpha_T)\) be a weighted cscK manifold. Then by \cite[Corollary 2]{Lah20b} and \cite[Theorem A]{Ino20}, the manifold \((X,\alpha_T)\) is weighted K-semistable. Let \((\X,\A_T)\) be a test configuration for \((X,\alpha_T)\), and \(\beta\in\mathfrak{t}\) be such that \(\DF_{v,w}(\X,\A_T,\beta)=0\). Since \((X,\alpha_T)\) is weighted cscK we have \(F_{v,w}=0\) on \(\mathfrak{t}\), and so \(\DF_{v,w}(\X,\A_T,\beta)=\DF_{v,w}^T(\X,\A_T)=0\). Being weighted cscK, \((X,\alpha_T)\) is in particular weighted extremal, and so by Theorem \ref{thm:main} is relatively weighted K-polystable. It follows that \((\X,\A_T)\) is a product test configuration. 
\end{proof}

\subsection{Expansions of weighted invariants}

To prove the main theorem, we will find a suitable \(T\)-invariant point \(p\in X\), blow up the test configuration \((\X,\A_T)\) along the orbit closure \(C:=\overline{\mathbb{C}^*p}\subset\X\) of \(p\), and compute the expansion of the \(\DF_{v,w}^T\)-invariant of the blown up test configuration as \(\epsilon\to0\), where \(\epsilon\) is the coefficient of the exceptional divisor. When expanding the unweighted Donaldson--Futaki invariant, the subleading coefficient is given by the Chow weight \[\Ch_p(\X,\A) := \frac{1}{n+1}\frac{\A^{n+1}}{\alpha^n} - \int_C\Omega,\] see \cite[Proposition 5.4]{DR17}. In our setting, it will be given by the following weighted analogue:

\begin{definition}
	Let \((\X,\A_T)\) be a test configuration for \((X,\alpha_T)\). Let \(p\in X\) be a \(T\)-invariant point, and define \(C:=\overline{\mathbb{C}^*p}\subset\X\). The \emph{weighted Chow weight of \(p\)} is \[\mathrm{Ch}_p^w(\X,\A_T):=\frac{(g(\A_T))(\xi)}{(g'(\alpha_T))(\xi)}-\int_C\Omega.\]
\end{definition}

We then have the following expansion of the weighted Donaldson--Futaki invariant. We first consider the case where both the test configuration and the curve \(C\) are smooth for notational simplicity, and later explain how to compute the expansion in the general case.

\begin{proposition}\label{prop:DF_expansion_smooth}
	Let \((\X,\A_T)\) be a smooth test configuration for \((X,\alpha_T)\), and let \(p\in X\) be a \(T\)-fixed point. Denote by \(C\) the orbit closure of \(p\in\X_1\cong X\), and let \((\Bl_C\X,\widetilde{\A}_T-\epsilon\E_T)\) be the blown up test configuration with \(T\)-invariant exceptional divisor \(\E_T\), depending on a parameter \(0<\epsilon\ll1\); here \(\widetilde{\A}_T\) denotes the pullback of \(\A_T\) to \(\Bl_C\X\). Suppose that \(C\) is smooth. Then \begin{align*}
	&\DF_{v,w}(\Bl_C\X,\widetilde{\A}_T-\epsilon\E_T) \\
	=&\DF_{v,w}(\X,\A_T)- \frac{v(\mu(p))}{(n-2)!}\mathrm{Ch}_p^w(\X,\A_T)\epsilon^{n-1}+\O(\epsilon^n).
	\end{align*}
\end{proposition}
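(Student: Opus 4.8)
The plan is to expand each constituent of the weighted Donaldson--Futaki invariant of $(\Bl_C\X,\widetilde{\A}_T-\epsilon\E_T)$ as a power series in $\epsilon$ and isolate the coefficient of $\epsilon^{n-1}$. Writing $\pi:\Bl_C\X\to\X$ for the blowdown, I would use that $\widetilde{\A}_T=\pi^*\A_T$ and that the general fibre of the new configuration is $\Bl_pX$ with polarisation $\widetilde{\alpha}_T-\epsilon e_T$, so that by definition and Lemma \ref{lem:average_weighted_scalar_curvature},
\[\DF_{v,w}(\Bl_C\X,\widetilde{\A}_T-\epsilon\E_T)=-(c_1(\Bl_C\X/\mathbb{P}^1)_T f(\widetilde{\A}_T-\epsilon\E_T))(\xi)+\hat{S}^\epsilon_{v,w}\,(g(\widetilde{\A}_T-\epsilon\E_T))(\xi),\]
where $\hat{S}^\epsilon_{v,w}=(c_1(\Bl_pX)_Tf'(\widetilde{\alpha}_T-\epsilon e_T))(\xi)/(g'(\widetilde{\alpha}_T-\epsilon e_T))(\xi)$ is the average weighted scalar curvature of the blown-up base. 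The point is that the base itself depends on $\epsilon$, so the correction comes from two sources.

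The key inputs are the $T$-equivariant blowup formulas. Since $C$ is a smooth curve of codimension $n$ in $\X$, I would use $c_1(\Bl_C\X/\mathbb{P}^1)_T=\pi^*c_1(\X/\mathbb{P}^1)_T-(n-1)\E_T$ together with the equivariant Segre push-forwards $\pi_*\E_T^j=0$ for $1\le j\le n-1$ and $\pi_*\E_T^n=(-1)^{n-1}[C]_T$, and the analogues $c_1(\Bl_pX)_T=\rho^*c_1(X)_T-(n-1)e_T$, $\rho_*e_T^n=(-1)^{n-1}[p]_T$ for the point blowup $\rho:\Bl_pX\to X$. Expanding $f(\widetilde{\A}_T-\epsilon\E_T)$ formally gives the $\epsilon^j$-coefficient $\tfrac{(-1)^j}{j!}\E_T^jf^{(j)}(\widetilde{\A}_T)$; the projection formula (Lemma \ref{lem:projection_formula}) with $\widetilde{\A}_T=\pi^*\A_T$ reduces every equivariant intersection to one on $\X$ weighted by $\pi_*\E_T^j$. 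The vanishing for $j<n$ then kills all terms of order $\epsilon^1,\dots,\epsilon^{n-2}$, shows the $\epsilon^0$ term is exactly $\DF_{v,w}(\X,\A_T)$, and confines the first correction to order $\epsilon^{n-1}$, arising solely from the discrepancy $-(n-1)\E_T$ in $c_1$ (through an $\E_T^n$ intersection) and from the change in $\hat{S}^\epsilon_{v,w}$ (through an $e_T^n$ intersection on the base).

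The decisive geometric observation I would exploit is that $C=\overline{\mathbb{C}^*p}$ is \emph{pointwise} $T$-fixed: since the $T$- and $\mathbb{C}^*$-actions commute and $p$ is $T$-fixed, each $\tau\cdot p$ is $T$-fixed, so $m_T$ is constant along $C$, equal to $\mu(p)$ by Lemma \ref{lem:moment_map_normalisation}. Computing the $\E_T^n$ term via the projection formula, keeping only the degree-two part on the curve and using $f^{(n)}=\widetilde v$, yields $(\E_T^n f^{(n-1)}(\widetilde{\A}_T))(\xi)=(-1)^{n-1}\!\int_C f^{(n)}(\langle\mu(p),\xi\rangle)\Omega=(-1)^{n-1}v(\mu(p))\!\int_C\Omega$, contributing $\tfrac{v(\mu(p))}{(n-2)!}\!\int_C\Omega$ to the $\epsilon^{n-1}$ coefficient. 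On the base, evaluation at the fixed point $p$ gives $(e_T^n(f')^{(n-1)}(\rho^*\alpha_T))(\xi)=(-1)^{n-1}v(\mu(p))$, whence $\hat{S}^\epsilon_{v,w}=\hat{S}_{v,w}-\tfrac{v(\mu(p))}{(n-2)!\,(g'(\alpha_T))(\xi)}\epsilon^{n-1}+\O(\epsilon^n)$; combined with $(g(\widetilde{\A}_T-\epsilon\E_T))(\xi)=(g(\A_T))(\xi)+\O(\epsilon^n)$ this gives the second contribution. Adding the two,
\[-\frac{v(\mu(p))}{(n-2)!}\left(\frac{(g(\A_T))(\xi)}{(g'(\alpha_T))(\xi)}-\int_C\Omega\right)\epsilon^{n-1}=-\frac{v(\mu(p))}{(n-2)!}\,\Ch_p^w(\X,\A_T)\,\epsilon^{n-1},\]
which is the asserted formula.

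I expect the main obstacle to be the careful bookkeeping of the $T$-equivariant intersection theory on the blowup. In particular, one must establish the equivariant Segre push-forward identities $\pi_*\E_T^j=(-1)^{j-1}(s_{j-n}(N_{C/\X}))_T\cap[C]_T$ (and their counterparts on $\Bl_pX$) rigorously in the equivariant setting, verify that the weight functions $f$ and $g$ — which enter only through their moment-map–twisted power series, as in the proof of Proposition \ref{prop:convergence} — interact correctly with these push-forwards, and confirm that no competing term of order below $\epsilon^{n-1}$ survives. Once the equivariant projection formula is secured, each surviving intersection localises onto either $C$ or the point $p$, where the constancy of $m_T$ makes the weight factor out as the constant $v(\mu(p))$, and the remaining work is the routine tracking of the binomial and factorial constants.
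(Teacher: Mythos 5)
Your proposal is correct and follows essentially the same route as the paper's proof: expand each equivariant intersection in the Donaldson--Futaki invariant in powers of $\epsilon$, use that classes pulled back to the exceptional divisor are pulled back from $C$ (resp.\ constant at $p$) so that only the $\E_T^n$ and $E_T^n$ intersections survive at order $\epsilon^{n-1}$, exploit the constancy of $m_T$ on $C$ (equal to $\mu(p)$) to evaluate the weights, and combine the contribution from the relative $c_1$ discrepancy with the shift in $\hat{S}^\epsilon_{v,w}$ to assemble $\mathrm{Ch}_p^w(\X,\A_T)$. Your phrasing via equivariant Segre push-forwards $\pi_*\E_T^j$ and the projection formula is just a repackaging of the paper's direct computation by restriction to $\E_T$ and fibre integration over $\E_T\to C$, and all the constants you obtain agree with the paper's.
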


\begin{proof}
	Write \[f(x) = \sum_k \frac{a_k}{k!} x^k;\quad\quad g(x) = \sum_k \frac{b_k}{k!} x^k,\] so that \[f^{(\ell)}(x) = \sum_{k=0}^\infty \frac{a_{k+\ell}}{k!} x^k;\quad\quad g^{(\ell)}(x) = \sum_{k=0}^\infty \frac{b_{k+\ell}}{k!} x^k.\] We also write \(\widetilde{\alpha}_T\) for the pullback of \(\alpha_T\) to \(\widetilde{X}:=\Bl_pX\), and \(\widetilde{\A}_T\) for the pullback of \(\A_T\) to \(\widetilde{\X}:=\Bl_C\X\). Denote by \(E_T\) the exceptional divisor of \(\Bl_pX\) and \(\E_T\) the exceptional divisor of \(\Bl_C\X\). Then \begin{align*}
		g'(\widetilde{\alpha}_T-\epsilon E_T) &= g'(\widetilde{\alpha}_T) + (-\epsilon E_T) \sum_{k=1}^\infty \frac{b_{k+1}}{k!} \binom{k}{1} \widetilde{\alpha}_T^{k-1}
		+ (-\epsilon E_T)^2\sum_{k=2}^\infty \frac{b_{k+1}}{k!}\binom{k}{2}\widetilde{\alpha}_T^{k-2} +\cdots \\
		&= g'(\widetilde{\alpha}_T)+(-\epsilon E_T)\sum_{k=0}^\infty \frac{b_{k+2}}{k!} \widetilde{\alpha}_T^k +\frac{1}{2!}(-\epsilon E_T)^2\sum_{k=0}^\infty \frac{b_{k+3}}{k!}\widetilde{\alpha}_T^k+\cdots \\
		&= g'(\widetilde{\alpha}_T)+(-\epsilon E_T)g^{(2)}(\widetilde{\alpha}_T)+\frac{1}{2!}(-\epsilon E_T)^2g^{(3)}(\widetilde{\alpha}_T)+\cdots.
	\end{align*} Now, note that \(\widetilde{\alpha}_T|_{E_T} = [0+\mu(p)]\) is constant, so \[((-E_T)^k\widetilde{\alpha}_T^\ell)=0\] whenever \(k<n\) and \[((-E_T)^n\widetilde{\alpha}_T^\ell) = \mu(p)^\ell((-E)^n)=-\mu(p)^\ell\] for \(\ell\geq1\), where the intersection \(((-E)^n)\) is computed in the usual non-equivariant cohomology ring. Hence, \begin{align*}
		(g'(\widetilde{\alpha}_T-\epsilon E_T))(\xi) &= (g'(\widetilde{\alpha}_T))(\xi) + \frac{1}{n!}((-\epsilon E_T)^ng^{(n+1)}(\widetilde{\alpha}_T))(\xi) + \O(\epsilon^{n+1}) \\
		&= (g'(\alpha_T))(\xi) - \frac{w(\mu(p))}{n!} \epsilon^n + \O(\epsilon^{n+1}).
	\end{align*} Next, note that \(\widetilde{\A}_T|_{\E_T}\) is the pullback of \(\A_T|_{C}\) under the map \(\E_T\to C\). It follows that \[((-\E_T)^k\widetilde{\A}_T^\ell)=0\] whenever \(k\leq n-1\), and \[((-\E_T)^n\widetilde{\A}_T^\ell) = -\ell \int_Cm_T^{\ell-1} \Omega\] for \(\ell\geq1\), noting that \(\A_T=[\Omega+m_T]\). But note the \(T\)-action on \(C\) is trivial, hence \(m_T\) is constant on \(C\) (equal to \(\mu(p)\)) and can be pulled out of the integral. Thus, similar to the calculation on \(\Bl_pX\), \[(g(\widetilde{\A}_T-\epsilon \E_T))(\xi) = (g(\A_T))(\xi) - \frac{w(\mu(p))\int_C\Omega}{n!}\epsilon^n + \O(\epsilon^{n+1}).\]
	
	Next we compute \((c_1(\widetilde{X})_T f'(\widetilde{\alpha}_T - \epsilon E_T))\). Similarly to \(g\), \[f'(\widetilde{\alpha}_T-\epsilon E_T) = f'(\widetilde{\alpha}_T) + (-\epsilon E_T)f^{(2)}(\widetilde{\alpha}_T)+\frac{1}{2!}(-\epsilon E_T)^2f^{(3)}(\widetilde{\alpha}_T)+\cdots.\] Next note \(c_1(\widetilde{X})_T = \widetilde{c}_1(X)_T - (n-1)[E_T]\), where we write \(\widetilde{c}_1(X)_T\) for the pull back of \(c_1(X)_T\) to \(\widetilde{X}\). Since \(\widetilde{c}_1(X)_T|_{E_T} = [0+\Delta\mu (p)]\) is constant, \[(\widetilde{c}_1(X)_T f'(\widetilde{\alpha}_T-\epsilon E_T))(\xi) = (c_1(X)_T f'(\alpha_T))(\xi) + \O(\epsilon^n).\] Next, \begin{align*}
		(E_Tf'(\widetilde{\alpha}_T-\epsilon E_T)) (\xi) &= \frac{1}{(n-1)!}\epsilon^{n-1}(E_T(-E_T)^{n-1}f^{(n)}(\widetilde{\alpha}_T))(\xi) + \O(\epsilon^n) \\
		&= \frac{v(\mu(p))}{(n-1)!}\epsilon^{n-1}+\O(\epsilon^n).
	\end{align*} Hence \[(\widetilde{c}_1(X)_T f'(\widetilde{\alpha}_T - \epsilon E_T))(\xi) = (c_1(X)_T f'(\alpha_T))(\xi) - \frac{v(\mu(p))}{(n-2)!} \epsilon^{n-1} + \O(\epsilon^n).\] In the same manner we readily compute \begin{align*}
	& (c_1(\widetilde{\X}/\mathbb{P}^1)_T f(\widetilde{\A}_T - \epsilon \E_T))(\xi) \\
	=& (c_1(\X/\mathbb{P}^1)_T f(\A_T))(\xi) - \frac{v(\mu(p)) \int_C\Omega}{(n-2)!} \epsilon^{n-1} + \O(\epsilon^{n+1}),
	\end{align*} and we will give more details of this in the more general singular case below. From these calculations, the expansion of the weighted Donaldson--Futaki invariant follows immediately.
\end{proof}

When \(\X\) and \(C\) are not necessarily smooth, we use the following modification:

\begin{proposition}\label{prop:blowup_general}
	Let \((\X,\A_T)\) be a test configuration for \((X,\alpha_T)\), let \(p\in X\) be a \(T\)-fixed point, and define \(C:=\overline{\mathbb{C}^*p}\subset\X\). Let \(r:\mathcal{Y}\to\X\) be a \(T^{\mathbb{C}}\times\mathbb{C}^*\)-equivariant resolution of singularities of \(\X\) with exceptional divisor \(\mathcal{F}\) supported on \(\mathcal{Y}_0\), such that \(\mathcal{Y}_0\) is a simple normal crossings divisor\footnote{The assumption that \(\mathcal{Y}_0\) is a simple normal crossings divisor is not needed for this particular expansion, however it will be needed for later results.} and the proper transform \(\hat{C}\subset\mathcal{Y}\) of \(C\) is smooth. Let \(b:\mathcal{B}\to\mathcal{Y}\) be the blowup of \(\mathcal{Y}\) along \(\hat{C}\), with exceptional divisor \(\E\). Then the weighted Donaldson--Futaki invariant of the test configuration \((\mathcal{B},b^*r^*\A_T-\epsilon\E_T-\epsilon^n b^*\mathcal{F}_T)\) has an expansion  \begin{align*}
		&\DF_{v,w}(\mathcal{B},b^*r^*\A_T-\epsilon\E_T-\epsilon^n b^*\mathcal{F}_T) \\
		=&\DF_{v,w}(\X,\A_T) - \frac{v(\mu(p))}{(n-2)!}\mathrm{Ch}_p^w(\X,\A_T)\epsilon^{n-1}+\O(\epsilon^n).
	\end{align*}
\end{proposition}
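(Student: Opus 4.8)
The plan is to compute $\DF_{v,w}$ directly on the smooth total space $\mathcal{B}$, regarding $(\mathcal{B},\A_\epsilon)$ with $\A_\epsilon := b^* r^* \A_T - \epsilon\E_T - \epsilon^n b^*\mathcal{F}_T$ as a genuine smooth test configuration whose general fibre is the blown-up base $(\Bl_p X, \widetilde{\alpha}_T - \epsilon E_T)$. Because the resolution divisor $\mathcal{F}$ is supported on the central fibre, the general fibre of $\mathcal{B}\to\mathbb{P}^1$ is $\Bl_p X$ and $\A_\epsilon$ restricts there to $\widetilde{\alpha}_T - \epsilon E_T$; moreover $p$ is a smooth point of the \emph{manifold} $X$, so the base is already smooth and needs no resolution. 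Hence the two base intersections $(g'(\widetilde{\alpha}_T - \epsilon E_T))(\xi)$ and $(c_1(\Bl_p X)_T f'(\widetilde{\alpha}_T - \epsilon E_T))(\xi)$ are literally those computed in Proposition \ref{prop:DF_expansion_smooth}, and only the two total-space intersections $(g(\A_\epsilon))(\xi)$ and $(c_1(\mathcal{B}/\mathbb{P}^1)_T f(\A_\epsilon))(\xi)$ require new work, which I would carry out modulo $\O(\epsilon^n)$.

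First I would observe that $b^*\mathcal{F}_T$ enters $\A_\epsilon$ only with coefficient $\epsilon^n$, so every term of the binomial expansion containing a factor of $b^*\mathcal{F}_T$ is automatically $\O(\epsilon^n)$; this is the sole reason for the exponent $\epsilon^n$, and it lets me discard $\mathcal{F}$ entirely in the $\epsilon^{n-1}$-analysis. At order $\epsilon^0$ the class is $(rb)^*\A_T$, and since $rb:\mathcal{B}\to\X$ is an equivariant resolution of singularities, the independence of $(g(\widetilde{\A}_T))$ and $(c_1(\widetilde{\X}/\mathbb{P}^1)_T f(\widetilde{\A}_T))$ of the chosen resolution (Proposition \ref{prop:convergence}) identifies the order-$\epsilon^0$ parts of the total-space terms with the corresponding terms of $\DF_{v,w}(\X,\A_T)$.

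It then remains to extract the $\epsilon^{n-1}$ coefficient from the exceptional divisor $\E$ of the blowup $b$ along the smooth curve $\hat{C}$, and here the geometry matches the smooth case exactly: $\hat{C}\subset\mathcal{Y}$ has codimension $n$, so $\E$ is a $\mathbb{P}^{n-1}$-bundle over $\hat{C}$; the class $(rb)^*\A_T$ restricts to $\E$ as a pullback from $\hat{C}$; and since the $T$-action on $C$ is trivial the moment map $m_T$ is constant, equal to $\mu(p)$, on $\E$. Consequently $((-\E_T)^k (rb)^*\A_T^{\,\ell}) = 0$ for $k<n$, which forces $(g(\A_\epsilon))(\xi) = (g(\A_T))(\xi) + \O(\epsilon^n)$, with no $\epsilon^{n-1}$-term. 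For the relative canonical class I would use $c_1(\mathcal{B}/\mathbb{P}^1)_T = b^* c_1(\mathcal{Y}/\mathbb{P}^1)_T - (n-1)\E_T$: the pullback term contributes only at $\O(\epsilon^n)$ (its restriction to $\E$ is again pulled back from $\hat{C}$, so $n$ powers of $\E$ are needed), while the $-(n-1)\E_T$ term supplies the subleading coefficient. Using $\int_{\hat{C}}(rb)^*\Omega = \int_C\Omega$ (valid since $rb:\hat{C}\to C$ is birational) and $f^{(n)}(\mu(p)) = v(\mu(p))$, this reproduces $(c_1(\mathcal{B}/\mathbb{P}^1)_T f(\A_\epsilon))(\xi) = (c_1(\X/\mathbb{P}^1)_T f(\A_T))(\xi) - \frac{v(\mu(p))\int_C\Omega}{(n-2)!}\epsilon^{n-1} + \O(\epsilon^n)$, exactly as in Proposition \ref{prop:DF_expansion_smooth}.

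Finally I would substitute these four expansions into the definition of $\DF_{v,w}$ and collect the $\epsilon^{n-1}$ coefficient. The $(g)$- and $(g')$-terms carry no $\epsilon^{n-1}$-correction, so the contributions are the $+\frac{v(\mu(p))\int_C\Omega}{(n-2)!}$ coming from $-(c_1(\mathcal{B}/\mathbb{P}^1)_T f)$ together with the $-\frac{v(\mu(p))}{(n-2)!}\frac{(g(\A_T))(\xi)}{(g'(\alpha_T))(\xi)}$ produced by expanding the ratio $\frac{(c_1(X)_T f'(\cdots))}{(g'(\cdots))}(g(\cdots))$; these combine into $-\frac{v(\mu(p))}{(n-2)!}\Ch_p^w(\X,\A_T)\epsilon^{n-1}$, as claimed. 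I expect the main obstacle to be conceptual rather than computational: one must ensure that passing to the resolution introduces no spurious subleading contributions. The two ingredients that guarantee this are precisely the exponent $\epsilon^n$ on $b^*\mathcal{F}_T$, which kills all resolution-divisor terms below order $n$, and the resolution-independence of Proposition \ref{prop:convergence}, which pins down the order-$\epsilon^0$ term; once these are in place the whole $\epsilon^{n-1}$-computation collapses to the smooth one of Proposition \ref{prop:DF_expansion_smooth}.
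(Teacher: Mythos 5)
Your proposal is correct and follows essentially the same route as the paper's proof: discard $\epsilon^n b^*\mathcal{F}_T$ because every term containing it is $\O(\epsilon^n)$, reuse the smooth base computations of Proposition \ref{prop:DF_expansion_smooth} verbatim, and extract the $\epsilon^{n-1}$-coefficient from the $-(n-1)\E_T$ piece of $c_1(\mathcal{B}/\mathbb{P}^1)_T = b^*c_1(\mathcal{Y}/\mathbb{P}^1)_T - (n-1)\E_T$, using that restrictions of pulled-back classes to $\E$ come from the one-dimensional $\hat{C}$, on which $m_T$ is constant equal to $\mu(p)$.

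One imprecision is worth fixing: $rb\colon\mathcal{B}\to\X$ is \emph{not} an equivariant resolution of singularities of $\X$, since $b$ blows up $\hat{C}$, whose image $C$ has generic point in the regular locus of $\X$; so you cannot literally invoke the resolution-independence of Proposition \ref{prop:convergence} to pin down the order-$\epsilon^0$ terms. The gap is harmless because the identification follows from ingredients you already state: the projection formula (Lemma \ref{lem:projection_formula}) applied to $b\colon\mathcal{B}\to\mathcal{Y}$ (both smooth) gives $(g(b^*r^*\A_T)) = (g(r^*\A_T))$ and $(b^*c_1(\mathcal{Y}/\mathbb{P}^1)_T\, f(b^*r^*\A_T)) = (c_1(\mathcal{Y}/\mathbb{P}^1)_T\, f(r^*\A_T))$, and these intersections on $\mathcal{Y}$ are \emph{by definition} the ones computing $\DF_{v,w}(\X,\A_T)$; meanwhile the $-(n-1)\E_T$ part contributes nothing at order $\epsilon^0$ by your own vanishing $((-\E_T)^k (rb)^*\A_T^{\ell}) = 0$ for $k<n$, applied with $k=1$. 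This is exactly how the paper phrases the step ("the intersections in the weighted Donaldson--Futaki invariant are defined by pulling back to a resolution of singularities, so this first term is the appropriate one"), so your argument needs only this rewording, not new content.
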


\begin{proof}
	The same calculation as in the proof of Proposition \ref{prop:DF_expansion_smooth} carries through, only it is more notationally cumbersome. First, note that \(\epsilon^n b^*\mathcal{F}_T\) can be effectively ignored in the calculation, since any term involving it will be \(\O(\epsilon^n)\). Hence the calculation of most terms is essentially unchanged; we will however calculate the term \((c_1(\mathcal{B}/\mathbb{P}^1)_T f(b^*r^*\A_T - \epsilon\E_T-\epsilon^n b^*\mathcal{F}_T))(\xi)\).
	
	First, note \(c_1(\mathcal{B}/\mathbb{P}^1)_T = b^*c_1(\mathcal{Y}/\mathbb{P}^1)_T - (n-1)[\E_T]\). For \(f(b^*r^*\A_T - \epsilon \E_T - \epsilon^n b^* \mathcal{F}_T)\), ignoring \(\epsilon^n b^* \mathcal{F}_T\) we compute \begin{align*}
		f(b^*r^*\A_T - \epsilon \E_T) &= f(b^*r^*\A_T) + \frac{1}{1!} (-\epsilon \E_T) \sum_{k=0}^\infty \frac{a_{k+1}}{k!}(b^*r^*\A_T)^k + \cdots  \\
		&+ \frac{1}{(n-1)!} (-\epsilon \E_T)^{n-1} \sum_{k=0}^\infty \frac{a_{k+n-1}}{k!}(b^*r^*\A_T)^k +\O(\epsilon^n).
	\end{align*} Note that \((-\E_T)^\ell b^*r^*\A_T^k\) has trivial integral whenever \(\ell<n\), since \(b^*r^*\A_T|_{\E_T}\) is pulled back from \(\hat{C}\). Furthermore, since \(\A_T = [\Omega + m_T]\) and \(r^* m_T|_{\hat{C}} = \mu(p)\) is constant, for \(k\geq1\) we have \[((-\E_T)^n b^*r^*\A_T^k)=-k\mu(p)^{k-1}\int_{\hat{C}}r^*\Omega.\] Therefore \begin{align*}
		&((n-1)\E_T f(b^*r^*\A_T-\epsilon\E_T))(\xi) \\
		=& \epsilon^{n-1}\frac{n-1}{(n-1)!}\sum_{k=0}^\infty\frac{a_{k+n-1}}{k!}(\E_T(-\E_T)^{n-1}(b^*r^*\A_T)^k)(\xi) + \O(\epsilon^n) \\
		=& \epsilon^{n-1}\frac{1}{(n-2)!}\sum_{k=0}^\infty\frac{a_{k+n}}{k!}\langle\mu(p),\xi\rangle^k\int_{\hat{C}}r^*\Omega + \O(\epsilon^n) \\
		=&
		\frac{v(\mu(p))\int_{C}\Omega}{(n-2)!}\epsilon^{n-1} + \O(\epsilon^n),
	\end{align*} where we used \(\int_{\hat{C}}r^*\Omega = \int_C\Omega\), which can be seen by integrating over the smooth locus of \(C\). Next, \begin{align*}
		(b^*c_1(\mathcal{Y}/\mathbb{P}^1)_T f(b^*r^*\A_T - \epsilon \E_T))(\xi) &= (b^*c_1(\mathcal{Y}/\mathbb{P}^1)_T f(b^*r^*\A_T))(\xi) + \O(\epsilon^n) \\
		&= (c_1(\mathcal{Y}/\mathbb{P}^1)_T f(r^*\A_T))(\xi) + \O(\epsilon^n).
	\end{align*} Here we used the projection formula for \(b:\mathcal{B}\to\mathcal{Y}\) to reduce the first term. The higher order terms are order \(\epsilon^n\), since \(b^*c_1(\mathcal{Y}/\mathbb{P}^1)_T|_{\E_T}\) and \(b^*r^*\A_T|_{\E_T}\) are pulled back from \(\hat{C}\), which is 1-dimensional. Combining these computations, we deduce \begin{align*}
		&(c_1(\mathcal{B}/\mathbb{P}^1)_T f(b^*r^*\A_T - \epsilon\E_T-\epsilon^n b^*\mathcal{F}_T))(\xi) \\
		=& (c_1(\mathcal{Y}/\mathbb{P}^1)_T f(r^*\A_T))(\xi) - \frac{v(\mu(p))\int_{C}\Omega}{(n-2)!}\epsilon^{n-1} + \O(\epsilon^n).
	\end{align*} Recall that the intersections in the weighted Donaldson--Futaki invariant are defined by pulling back to a resolution of singularities, so this first term is the appropriate one appearing in the weighted Donaldson--Futaki invariant of \((\X,\A_T)\).
\end{proof}

Since we wish to compute relative stability, we must also compute how the inner products \(\langle-,-\rangle_{(\X,\A_T)}\) and the weighted Futaki invariants \(F_{v,w}\) change on the blowup. First, we state the following analogue of \cite[Proposition 37]{Sze15} in the weighted setting:

\begin{lemma}\label{lem:T-ip_blowup}
	Let \((X,\alpha_T)\) be a K{\"a}hler manifold with \(T\)-action, and let \(p\in X\) be a \(T\)-invariant point. Denote by \(\langle-,-\rangle\) the inner product on \(\mathfrak{t}\) defined on the base manifold \((X,\alpha_T)\), and by \(\langle-,-\rangle_\epsilon\) the inner product on \(\mathfrak{t}\) defined on \((\Bl_pX,\alpha_T - \epsilon E_T)\). Then \[\langle\beta_1, \beta_2\rangle_\epsilon = \langle\beta_1, \beta_2\rangle + \O(\epsilon^{n-\delta})\] for all \(\delta > 0\) sufficiently small.
\end{lemma}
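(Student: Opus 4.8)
The plan is to exploit that, by Lemma \ref{lem:inner_product}, both inner products are \emph{equivariant-cohomological}: each depends only on the relevant class---$\alpha_T$ on $X$ and $\widetilde{\alpha}_T-\epsilon E_T$ on $\Bl_pX$---and not on the chosen representative or metric. Granting this, the whole statement reduces to expanding the blown-up class in $\epsilon$ using the self-intersection calculus already set up for Proposition \ref{prop:DF_expansion_smooth}.

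First I would put the inner product into intersection-theoretic form. Expanding the centred hamiltonians of $\beta_1,\beta_2\in\mathfrak{t}$ (with hamiltonians $h_1,h_2$ on $X$) gives
\[\langle\beta_1,\beta_2\rangle=\int_X h_1h_2\,w(\mu)\,\omega^n-\frac{\big(\int_X h_1 w(\mu)\omega^n\big)\big(\int_X h_2 w(\mu)\omega^n\big)}{\int_X w(\mu)\omega^n},\]
so it suffices to control the three integrals $V:=\int_X w(\mu)\omega^n$, $A_j:=\int_X h_j\,w(\mu)\omega^n$ and $B_{jk}:=\int_X h_jh_k\,w(\mu)\omega^n$. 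Writing $h_j=\mu^{\beta_j}$ and $w(\mu)=g^{(n+1)}(\mu^\xi)$, the generating-function identity from the proof of Proposition \ref{prop:convergence}, applied on the $n$-dimensional manifold $X$, expresses each as a derivative near $\xi$ of an equivariant intersection: choosing $G$ with $G'=g$,
\[V=n!\,(g'(\alpha_T))(\xi),\qquad A_j=n!\,\partial_s\big|_0\,(g(\alpha_T))(\xi+s\beta_j),\qquad B_{jk}=n!\,\partial_{s_1}\partial_{s_2}\big|_0\,(G(\alpha_T))(\xi+s_1\beta_j+s_2\beta_k).\]
The identical formulae compute $V(\epsilon),A_j(\epsilon),B_{jk}(\epsilon)$ on $\Bl_pX$ with $\alpha_T$ replaced by $\widetilde{\alpha}_T-\epsilon E_T$, where $\widetilde{\alpha}_T$ is the pullback along the blowdown $\pi\colon\Bl_pX\to X$.

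Next I would expand in $\epsilon$ exactly as in Proposition \ref{prop:DF_expansion_smooth}. For real-analytic $\phi$ and $\eta$ near $\xi$, the binomial expansion of $(\widetilde{\alpha}_T-\epsilon E_T)^k$ reduces every correction term to an intersection $((\widetilde{\alpha}_T)^{k-b}E_T^b)(\eta)$ with $b\geq1$. Since $\pi|_{E_T}$ is constant we have $\widetilde{\alpha}_T|_{E_T}=[0+\mu(p)]$, which carries no form part; hence this intersection equals $\langle\mu(p),\eta\rangle^{k-b}$ times the equivariant self-intersection $(E_T^b)$, which reduces to an integral over $E_T\cong\mathbb{P}^{n-1}$ valued in $H^{2(b-n)}_T(\mathrm{pt})$ and therefore \emph{vanishes for $1\leq b\leq n-1$}. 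So the first correction occurs only at order $\epsilon^n$, uniformly for $\eta$ near $\xi$, while the $b=0$ term reproduces the corresponding intersection on $X$ by the projection formula (Lemma \ref{lem:projection_formula}). Differentiating this uniform expansion in the parameters $s$ then gives $V(\epsilon)=V+\O(\epsilon^n)$, $A_j(\epsilon)=A_j+\O(\epsilon^n)$ and $B_{jk}(\epsilon)=B_{jk}+\O(\epsilon^n)$; substituting into the first displayed formula and expanding the quotient (using $V>0$) yields $\langle\beta_1,\beta_2\rangle_\epsilon=\langle\beta_1,\beta_2\rangle+\O(\epsilon^n)$, which is even stronger than the asserted $\O(\epsilon^{n-\delta})$.

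The step I expect to demand the most care is the uniformity of the cohomological expansion in the auxiliary directions $s$: one must verify that $g,g',G$ stay real-analytic on a neighbourhood of $\langle P_\epsilon,\xi\rangle$ for all small $\epsilon$ (so that Proposition \ref{prop:convergence} applies on the blowup, with $P_\epsilon$ the moment image of $\widetilde{\alpha}_T-\epsilon E_T$), and that the $\epsilon^n$-coefficient is itself analytic in $\eta$, so that $\partial_s$ preserves the error order. The structural reason there is \emph{no} order-$\epsilon^{n-1}$ term---in contrast with Proposition \ref{prop:DF_expansion_smooth}---is simply that the inner product contains no first Chern class factor, so the decomposition $c_1(\Bl_pX)=\pi^*c_1(X)-(n-1)E_T$ that produces the order-$\epsilon^{n-1}$ Chow term there never enters here. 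One could alternatively obtain the stated fractional bound directly, following \cite{Sze15}, by estimating the integrals against an explicitly glued Kähler metric on $\Bl_pX$, but the cohomological route above seems both cleaner and sharper.
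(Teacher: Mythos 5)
Your argument is correct, but it is not the paper's proof: the paper disposes of this lemma by citing Sz{\'e}kelyhidi's gluing result \cite[Proposition 37]{Sze15}, remarking only that the weighted measure \(w(\mu)\omega^n\) is uniformly equivalent to \(\omega^n\) and that the orthogonality hypothesis \(\langle\beta_1,\beta_2\rangle=0\) assumed there can be dropped. That argument is metric-theoretic -- one builds K{\"a}hler metrics on \(\Bl_pX\) in the class \(\alpha-\epsilon[E]\) by gluing a scaled model metric near \(p\) and estimates the hamiltonians and their averages directly, the loss of \(\delta\) in the exponent being an artifact of the gluing estimates. Your route is instead purely cohomological: by Lemmas \ref{lem:independence} and \ref{lem:inner_product} both inner products are invariants of the equivariant classes \(\alpha_T\) and \(\widetilde{\alpha}_T-\epsilon E_T\) alone, your identities \(V=n!\,(g'(\alpha_T))(\xi)\), \(A_j=n!\,\partial_s|_0(g(\alpha_T))(\xi+s\beta_j)\), \(B_{jk}=n!\,\partial_{s_1}\partial_{s_2}|_0(G(\alpha_T))(\xi+s_1\beta_j+s_2\beta_k)\) check out against the generating-function computation in the proof of Proposition \ref{prop:convergence}, and the \(\epsilon\)-expansion then follows from exactly the self-intersection calculus of Proposition \ref{prop:DF_expansion_smooth}: since \(\widetilde{\alpha}_T|_{E_T}=[0+\mu(p)]\) has no form part, all corrections of order \(\epsilon^b\) with \(1\leq b\leq n-1\) vanish for degree reasons, and the \(b=0\) term descends to \(X\) by Lemma \ref{lem:projection_formula}. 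The technical points you flag (analyticity of \(g,g',G\) near \(\langle P_\epsilon,\xi\rangle\) for small \(\epsilon\), and analyticity in \(\eta\) of the error so that differentiation in \(s\) preserves the order, e.g.\ via Cauchy estimates on a fixed neighbourhood of \(\xi\)) are genuine but routine given the machinery already in the paper. As for what each approach buys: the paper's citation is essentially a one-line proof, and the analytic method needs only that \(w\) be smooth and positive, not the analytic structure of Assumption \ref{ass:weights}; your argument stays entirely inside the paper's equivariant intersection framework (so it does require Assumption \ref{ass:weights}, which is in force anyway), avoids all gluing analysis, yields the sharper error \(\O(\epsilon^n)\) -- indeed one can extract the exact leading correction \(-\epsilon^n w(\mu(p))(h_1(p)-\hat{h}_1)(h_2(p)-\hat{h}_2)\) -- and isolates the structural reason the expansion here is one order better than in Proposition \ref{prop:DF_expansion_smooth}, namely the absence of a \(c_1\) factor and hence of the \(-(n-1)E_T\) correction.
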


The exact same proof from \cite{Sze15} carries over with minimal modification, since the measure \(w(\mu)\omega^n\) is equivalent to the measure \(\omega^n\). We only note that the result in \cite{Sze15} is stated in the case \(\langle\beta_1,\beta_2\rangle = 0\), but the proof works equally well without this assumption.

\begin{lemma}\label{lem:blowup_inner_product}
	Given the setup of Proposition \ref{prop:blowup_general}, denote by \(\lambda\) the generator of the \(\mathbb{C}^*\)-action of the test configuration \(\X\), and let \(\beta\in\mathfrak{t}\). Then \begin{align*}
		\langle \lambda,\beta\rangle_{(\mathcal{B}, b^*r^*\A_T-\epsilon \E_T-\epsilon^{n}b^*E_{\mathcal{Y},T})} = \langle \lambda, \beta \rangle_{(\X,\A_T)} + \O(\epsilon^{n-\delta}),
	\end{align*} for all \(\delta>0\) sufficiently small.
\end{lemma}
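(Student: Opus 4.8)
The plan is to reduce to the central fibre and then reuse the localisation-of-perturbation estimate behind Lemma \ref{lem:T-ip_blowup}. By definition both sides are central-fibre integrals: if $\Omega_\epsilon$ is a $T\times S^1$-invariant K\"ahler representative of $b^*r^*\A_T-\epsilon\E_T-\epsilon^n b^*E_{\mathcal{Y},T}$ with moment map $m^\epsilon$, and $h_\lambda^\epsilon,h_\beta^\epsilon$ the hamiltonians of Lemma \ref{lem:hamiltonians}, then
\[
\langle\lambda,\beta\rangle_{(\mathcal{B},\,b^*r^*\A_T-\epsilon\E_T-\epsilon^n b^*E_{\mathcal{Y},T})}=\int_{\mathcal{B}_0}(h_\lambda^\epsilon-\hat h_\lambda^\epsilon)(h_\beta^\epsilon-\hat h_\beta^\epsilon)\,w(m^\epsilon_0)\,\Omega_{\epsilon,0}^n,
\]
and $\langle\lambda,\beta\rangle_{(\X,\A_T)}$ is computed the same way on $\mathcal{Y}_0$, as in the proof of Lemma \ref{lem:inner_product}. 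Since $b$ is an isomorphism away from $\E$, and $\E$ meets the central fibre in the exceptional $\mathbb{P}^{n-1}$ over the single point $q:=\hat C\cap\mathcal{Y}_0$, I would first fix (exactly as in \cite{Sze15}, used already for Lemma \ref{lem:T-ip_blowup}) a family $\Omega_\epsilon$ equal to $b^*r^*\Omega$ outside a neighbourhood $U_\epsilon$ of $\E$ whose slice $U_\epsilon\cap\mathcal{B}_0$ collapses to $q$ as $\epsilon\to0$, carrying a rescaled model metric near $\E$.

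The key simplification is that the centred hamiltonians $h-\hat h$ are insensitive to the additive normalisation of $h$: replacing $h$ by $h+c$ changes $\hat h$ by the same constant. Outside $U_\epsilon$ the forms and the generating vector fields agree with their pullbacks from $\mathcal{Y}$, so by Lemma \ref{lem:hamiltonians}(3) the hamiltonians $h_\lambda^\epsilon$ and $h_\beta^\epsilon$ differ from $b^*r^*h_\lambda$ and $b^*r^*h_\beta$ only by constants there; hence the centred quantities $(h_\lambda^\epsilon-\hat h_\lambda^\epsilon)$ and $(h_\beta^\epsilon-\hat h_\beta^\epsilon)$ agree with the unperturbed centred hamiltonians up to the shifts $\hat h^\epsilon-\hat h$ in the averages. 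Those shifts, the change of measure $w(m^\epsilon_0)\Omega_{\epsilon,0}^n-w(m_0)\Omega_0^n$, and the denominators defining the averages are all supported in $U_\epsilon$, so each is controlled by the volume of $U_\epsilon\cap\mathcal{B}_0$.

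Assembling these, $\langle\lambda,\beta\rangle_{(\mathcal{B},\ldots)}-\langle\lambda,\beta\rangle_{(\X,\A_T)}$ reduces to integrals supported on $U_\epsilon\cap\mathcal{B}_0$ (together with the excised neighbourhood of $q$ in $\mathcal{Y}_0$), all with bounded integrands. Boundedness of $(h_\beta^\epsilon-\hat h_\beta^\epsilon)$ uses that $p$ is $T$-fixed: then $C$, $\hat C$ and $q$ are pointwise $T$-fixed, so $m_T\equiv\mu(p)$ and $h_\beta\equiv\langle\mu(p),\beta\rangle$ along $C$, as in Proposition \ref{prop:blowup_general}. Running the estimate exactly as in \cite{Sze15}—where the loss $\delta>0$ arises from the interpolation annulus between the model metric and $b^*r^*\Omega$—bounds the volume of the relevant region, and hence the whole difference, by $\O(\epsilon^{n-\delta})$. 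The term $\epsilon^n b^*E_{\mathcal{Y},T}$ perturbs the class only at order $\epsilon^n$ and is absorbed.

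The main obstacle is the one feature genuinely new relative to Lemma \ref{lem:T-ip_blowup}: there both arguments lie in $\mathfrak{t}$ and the inner product may be read off the base $\Bl_pX$, whereas here $\lambda$ lies in the $S^1$-direction, so $h_\lambda=m_{S^1}$ is not constant along $C$ (the $\mathbb{C}^*$-action moves its points) nor along the exceptional $\mathbb{P}^{n-1}$ (the torus acts nontrivially on the normal bundle of $\hat C$). I expect this to be harmless: the estimate above needs only that $h_\lambda^\epsilon$ is uniformly bounded on $U_\epsilon\cap\mathcal{B}_0$, which it is, so the Sz\'ekelyhidi argument transfers without change and the $S^1$-hamiltonian contributes merely a bounded factor that does not worsen the order beyond the $\delta$ already present.
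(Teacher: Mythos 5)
Your argument is correct and is essentially the paper's own: the paper simply invokes \cite[Proposition 4.16]{Der18}, whose content is exactly your localisation scheme --- use the simple normal crossings structure of \(\mathcal{Y}_0\) to reduce the central-fibre integrals to integrals over smooth components, then apply the Sz{\'e}kelyhidi-type glued-metric estimate of Lemma \ref{lem:T-ip_blowup}, with the perturbation confined to a region whose volume is \(\O(\epsilon^{n-\delta})\). The one point you flag as a potential obstacle --- that \(\lambda\) lies in the \(S^1\)-direction --- dissolves in that formulation: one applies Lemma \ref{lem:T-ip_blowup} with the enlarged torus \(T\times S^1\) acting on the component of \(\mathcal{Y}_0\) containing \(q=\hat{C}\cap\mathcal{Y}_0\), which is a \(T\times S^1\)-fixed point, so no estimate beyond the boundedness you invoke is needed, exactly as you surmise.
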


The proof is unchanged from \cite[Proposition 4.16]{Der18}, where one uses the fact that \(\mathcal{Y}_0\) is simple normal crossings divisor to reduce the computation to the smooth case, and apply Lemma \ref{lem:T-ip_blowup}.

\begin{lemma}\label{lem:Futaki_blowup}
	Let \((X,\alpha_T)\) be a K{\"a}hler manifold with \(T\)-action, and let \(p\in X\) a \(T\)-invariant point. Denote by \(F_{v,w}^\epsilon\) the weighted Futaki invariant on the blown up manifold \(\Bl_pX\) for the class \(\alpha_T-\epsilon E_T\), where \(E\) is the exceptional divisor of the blowup. Then weighted Futaki invariant has an expansion \[F_{v,w}^\epsilon(\beta) = F_{v,w}(\beta) + \frac{v(\mu(p))}{(n-2)!}(h_\beta(p) - \hat{h}_\beta)\epsilon^{n-1} + \O(\epsilon^n).\]
\end{lemma}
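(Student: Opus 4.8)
The plan is to realise $F_{v,w}^\epsilon(\beta)$ as a weighted Donaldson--Futaki invariant of a product test configuration and then apply the blow-up expansion of Proposition \ref{prop:DF_expansion_smooth}. By \eqref{eq:weighted_Futaki} and \cite[Proposition 3]{Lah19}, the invariant $F_{v,w}^\epsilon(\beta)$ is exactly the weighted Donaldson--Futaki invariant of the product test configuration of $(\Bl_pX,\alpha_T-\epsilon E_T)$ determined by $\beta$. Writing $(\X_\beta,\A_T)$ for the product test configuration of $(X,\alpha_T)$ determined by $\beta$, and noting that $\beta$ fixes $p$ so that $C:=\overline{\mathbb{C}^*p}=\{p\}\times\mathbb{P}^1\subset\X_\beta$ is a smooth section of the smooth total space $\X_\beta$, this product test configuration for the blow-up is precisely $(\Bl_C\X_\beta,\widetilde{\A}_T-\epsilon\E_T)$. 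Since $\DF_{v,w}(\X_\beta,\A_T)=F_{v,w}(\beta)$, Proposition \ref{prop:DF_expansion_smooth} then yields
\[
F_{v,w}^\epsilon(\beta)=F_{v,w}(\beta)-\frac{v(\mu(p))}{(n-2)!}\,\mathrm{Ch}_p^w(\X_\beta,\A_T)\,\epsilon^{n-1}+\O(\epsilon^n),
\]
so the lemma reduces to the identity $\mathrm{Ch}_p^w(\X_\beta,\A_T)=\hat{h}_\beta-h_\beta(p)$.

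To establish this I would track the two terms of $\mathrm{Ch}_p^w(\X_\beta,\A_T)=\frac{(g(\A_T))(\xi)}{(g'(\alpha_T))(\xi)}-\int_C\Omega$ as $\beta$ varies, starting from $\beta=0$. For the untwisted product $\X_0=X\times\mathbb{P}^1$ a direct product computation gives $(g(\A_T))(\xi)=c\,(g'(\alpha_T))(\xi)$ and $\int_{C}\Omega=c$, where $c$ is the area of the base $\mathbb{P}^1$; hence $\mathrm{Ch}_p^w(\X_0,\A_T)=0$. Now Lemma \ref{lem:integral_twist} shows that twisting by $\beta$ raises $(g(\A_T))(\xi)$ by $\frac{1}{n!}\int_Xh_\beta w(\mu)\omega^n=\hat{h}_\beta\,(g'(\alpha_T))(\xi)$, so the first term increases by $\hat{h}_\beta$. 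The crux is therefore to show that the second term $\int_C\Omega$ increases by exactly $h_\beta(p)=\langle\mu(p),\beta\rangle$ under the same twist; granting this, $\mathrm{Ch}_p^w(\X_\beta,\A_T)=0+\hat{h}_\beta-h_\beta(p)$, and substituting back gives the $\epsilon^{n-1}$-coefficient $+\frac{v(\mu(p))}{(n-2)!}(h_\beta(p)-\hat{h}_\beta)$ as claimed.

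The main obstacle is thus the computation of $\int_C\Omega$ for the twisted product. Because $C$ lies in the $T$-fixed locus through $p$, the $T$-moment map is constant, $m_T\equiv\mu(p)$ on $C$, and the symplectic area of the invariant curve $C\cong\mathbb{P}^1$ is computed by equivariant localisation as the difference of the test-configuration $S^1$-moment map $m_{S^1}$ at the two fixed points $C_0$ (over $0$) and $C_\infty$ (over $\infty$). The twist by $\beta$ alters the $S^1$-hamiltonian by $\langle m_T,\beta\rangle=h_\beta(p)$, but only over the chart near $0$ where the twist is performed, the compactification over $\infty$ being left trivial; it is precisely this asymmetry between the two fixed points that produces the net shift $h_\beta(p)$ in $\int_C\Omega$. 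The only delicate point is the bookkeeping of the moment-map sign convention $\langle d\mu,\xi\rangle=-\omega(\xi,-)$ together with the direction of the $\mathbb{C}^*$-flow, which combine to give the stated sign. Finally, since $F_{v,w}^\epsilon$, $F_{v,w}$, $h_\beta(p)$ and $\hat{h}_\beta$ are all linear in $\beta$, it suffices to prove the identity for integral $\beta$ — where $\X_\beta$ is a genuine product test configuration — and extend to all $\beta\in\mathfrak{t}$ by linearity.
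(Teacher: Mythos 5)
Your proposal is correct and takes essentially the same route as the paper: the paper likewise realises \(F_{v,w}^\epsilon(\beta)\) as the weighted Donaldson--Futaki invariant of the blown-up product test configuration associated to \(\beta\), applies the blow-up expansion proposition, computes the weighted Chow weight of \(\X_\beta\) via the twist formula (the paper's Lemma \ref{lem:twisted_Chow_weight}), and extends to general \(\beta\in\mathfrak{t}\) by linearity. Your inline derivation of \(\Ch_p^w(\X_\beta,\A_T)=\hat{h}_\beta-h_\beta(p)\) is exactly the content of that lemma, and in fact pins down the sign that makes the stated \(\epsilon^{n-1}\)-coefficient \(+\frac{v(\mu(p))}{(n-2)!}(h_\beta(p)-\hat{h}_\beta)\) come out correctly.
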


\begin{proof}
	 This is a straightforward consequence of Proposition \ref{prop:blowup_general}.\footnote{I thank Ruadha{\'i} Dervan for pointing this out to me.} In particular, in the case \(\beta\in\mathfrak{t}\) is integral, we let \((\X,\A_T)\) be the product test configuration associated to \(\beta\). By Lemma \ref{lem:twisted_Chow_weight} below, the weighted Chow weight is computed as \(h_\beta(p)-\hat{h}_\beta\). The weighted Donaldson--Futaki invariant of a product test configuration is merely the weighted Futaki invariant of the corresponding 1-parameter subgroup, so the expansion follows immediately. For a general element \(\beta\in\mathfrak{t}\), we write \(\beta\) as a linear combination of integral elements, and apply the expansion in the integral case.
\end{proof}

We will see that in the expansion of \(\DF_{v,w}^T\), rather than the weighted Chow weight of a point we get a \(T\)-orthogonal analogue; that is, a variant of the weighted Chow weight that is invariant under twisting the test configuration by one-parameter subgroups of \(T\).

\begin{definition}
	The \emph{\(T\)-orthogonal weighted Chow weight} is \[\mathrm{Ch}_p^T(\X,\A_T):=\frac{(g(\A_T))(\xi)}{(g'(\alpha_T))(\xi)}-\int_C\Omega+\sum_{i=1}^r\langle\lambda,\beta_i\rangle_{(\X,\A_T)}(h_{\beta_i}(p)-\hat{h}_{\beta_i}).\]
\end{definition}

We will study this invariant in more detail in the following subsection. For now, we describe the expansion of \(\DF_{v,w}^T\) in terms of the \(T\)-orthogonal Chow weight. This is an immediate consequence of the definition of \(\DF^T_{v,w}\) and the expansions of Proposition \ref{prop:blowup_general}, Lemma \ref{lem:T-ip_blowup}, Lemma \ref{lem:blowup_inner_product}, and Lemma \ref{lem:Futaki_blowup}.

\begin{proposition}\label{prop:DFT_blowup}
	Let \((\X,\A_T)\) be a test configuration for \((X,\alpha_T)\), let \(p\in X\) be a \(T\)-fixed point, and define \(C:=\overline{\mathbb{C}^*p}\subset\X\). Let \(r:\mathcal{Y}\to\X\) be an equivariant resolution of singularities of \(\X\) with exceptional divisor \(\mathcal{F}_T\) supported on \(\mathcal{Y}_0\), such that \(\mathcal{Y}_0\) is a simple normal crossings divisor and the proper transform \(\hat{C}\subset\mathcal{Y}\) of \(C\) is smooth. Let \(b:\mathcal{B}\to\mathcal{Y}\) be the blowup of \(\mathcal{Y}\) along \(\hat{C}\), with exceptional divisor \(\E_T\). Then the \(\DF^T_{v,w}\)-invariant of the test configuration \((\mathcal{B},b^*r^*\A_T-\epsilon\E_T-\epsilon^nb^*\mathcal{F}_T)\) for \((\Bl_pX,\alpha_T-\epsilon E_T)\) has an expansion  \begin{align*}
			&\DF_{v,w}^T(\mathcal{B},b^*r^*\A_T-\epsilon\E_T-\epsilon^{n}b^*\mathcal{F}_T) \\
			=&\DF_{v,w}^T(\X,\A_T) - \frac{v(\mu(p))}{(n-2)!} \mathrm{Ch}_p^T(\X,\A_T) \epsilon^{n-1} + \O(\epsilon^\kappa),
		\end{align*} where \(\kappa > n-1\).
\end{proposition}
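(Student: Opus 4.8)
The plan is to unwind the definition of $\DF^T_{v,w}$ on both the base and the blowup and then substitute the four cited expansions, keeping careful track of the order of each error term. First I would replace the sum over an orthonormal basis by an orthogonal projection: setting $\beta:=\sum_{i=1}^r\langle\lambda,\beta_i\rangle_{(\X,\A_T)}\beta_i\in\mathfrak{t}$, linearity of $F_{v,w}$ on $\mathfrak{t}$ gives $\DF^T_{v,w}(\X,\A_T)=\DF_{v,w}(\X,\A_T)-F_{v,w}(\beta)$. Writing $\A_T^\epsilon:=b^*r^*\A_T-\epsilon\E_T-\epsilon^nb^*\mathcal{F}_T$ for the blown-up class and $F^\epsilon_{v,w}$ for the weighted Futaki invariant on $\Bl_pX$, the same manipulation gives $\DF^T_{v,w}(\mathcal{B},\A_T^\epsilon)=\DF_{v,w}(\mathcal{B},\A_T^\epsilon)-F^\epsilon_{v,w}(\beta^\epsilon)$, where $\beta^\epsilon\in\mathfrak{t}$ is the orthogonal projection of $\lambda$ computed with the inner products of $(\mathcal{B},\A_T^\epsilon)$.

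The one genuine subtlety is that $\beta^\epsilon$ is \emph{not} the base projection $\beta$, since the definition of $\DF^T_{v,w}$ on the blowup demands orthonormality for $\langle-,-\rangle_{(\mathcal{B},\A_T^\epsilon)}$ rather than the base inner product. The crucial estimate I would establish is $\beta^\epsilon=\beta+\O(\epsilon^{n-\delta})$ for every small $\delta>0$. This follows because the Gram data determining the projection vary by $\O(\epsilon^{n-\delta})$: the restricted inner product on $\mathfrak{t}$ does so by Lemma \ref{lem:T-ip_blowup}, and the mixed products $\langle\lambda,\beta_i\rangle_{(\mathcal{B},\A_T^\epsilon)}$ do so by Lemma \ref{lem:blowup_inner_product}. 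Since $\beta^\epsilon$ is obtained from this data by inverting the (invertible, for small $\epsilon$) Gram matrix on $\mathfrak{t}$, it inherits the same order of error.

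With this estimate in place, I would expand $F^\epsilon_{v,w}(\beta^\epsilon)$ via Lemma \ref{lem:Futaki_blowup}, using the linearity in the argument of $F_{v,w}$, of $F^\epsilon_{v,w}$, and of $\gamma\mapsto h_\gamma(p)-\hat{h}_\gamma$. The perturbation $\beta^\epsilon-\beta=\O(\epsilon^{n-\delta})$ enters through $F_{v,w}(\beta^\epsilon)=F_{v,w}(\beta)+\O(\epsilon^{n-\delta})$ and shifts the order-$\epsilon^{n-1}$ term only at order $\epsilon^{2n-1-\delta}$, so that $F^\epsilon_{v,w}(\beta^\epsilon)=F_{v,w}(\beta)+\frac{v(\mu(p))}{(n-2)!}(h_\beta(p)-\hat{h}_\beta)\epsilon^{n-1}+\O(\epsilon^{n-\delta})$. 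Combining with the expansion of $\DF_{v,w}(\mathcal{B},\A_T^\epsilon)$ from Proposition \ref{prop:blowup_general}, the constant terms collapse to $\DF^T_{v,w}(\X,\A_T)$ and the $\epsilon^{n-1}$ coefficient becomes $-\frac{v(\mu(p))}{(n-2)!}\big(\mathrm{Ch}_p^w(\X,\A_T)+(h_\beta(p)-\hat{h}_\beta)\big)$.

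Finally I would recognise this coefficient as the $T$-orthogonal weighted Chow weight. Expanding $h_\beta(p)-\hat{h}_\beta=\sum_{i=1}^r\langle\lambda,\beta_i\rangle_{(\X,\A_T)}(h_{\beta_i}(p)-\hat{h}_{\beta_i})$ by linearity and adding $\mathrm{Ch}_p^w(\X,\A_T)=\frac{(g(\A_T))(\xi)}{(g'(\alpha_T))(\xi)}-\int_C\Omega$ reproduces precisely the defining formula for $\mathrm{Ch}_p^T(\X,\A_T)$, giving the claimed expansion with $\kappa=n-\delta>n-1$. The only real obstacle is the bookkeeping flagged above: one must be sure that the $\epsilon$-dependence of the orthonormal basis feeds exclusively into error terms of order strictly above $\epsilon^{n-1}$, so that the relative Chow weight surfaces cleanly at the subleading order; everything else is a direct substitution of the quoted expansions.
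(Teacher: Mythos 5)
Your proposal is correct and follows exactly the route the paper intends: the paper's proof is precisely the one-line observation that the result follows from the definition of \(\DF^T_{v,w}\) together with Proposition \ref{prop:blowup_general}, Lemma \ref{lem:T-ip_blowup}, Lemma \ref{lem:blowup_inner_product}, and Lemma \ref{lem:Futaki_blowup}, and your write-up is a faithful fleshing-out of that combination. In particular, your Gram-matrix argument showing \(\beta^\epsilon=\beta+\O(\epsilon^{n-\delta})\) correctly supplies the only nontrivial bookkeeping (the \(\epsilon\)-dependence of the orthogonal projection), which is exactly why the paper invokes the two inner-product lemmas.
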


The last thing we require is the existence of a \(T\)-invariant point with positive weighted \(T\)-Chow weight. We state the result here, and prove it in the next subsection.

\begin{proposition}\label{prop:Chow}
	Let \((\X,\A_T)\) be a \(T\)-equivariant test configuration for \((\X,\alpha_T)\) with \(\|(\X,\A_T)^\perp\|^w_1 > 0\), where \((\X,\A_T)^\perp\) denotes the component of the test configuration orthogonal to the torus and the norm is defined in equation \eqref{eq:weighted_orthogonal_projection_norm}. Then there exists a \(T\)-invariant point \(p\in X\) such that \[\Ch_p^T(\X,\A_T) > 0.\] 
\end{proposition}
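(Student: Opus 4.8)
The plan is to show that, for a $T$-fixed point $p$, the quantity $\Ch_p^T(\X,\A_T)$ is governed by the hamiltonian of the $T$-orthogonal part of the test-configuration $\mathbb{C}^*$-action on the central fibre $\X_0$, and then to produce the desired point by driving this hamiltonian above (or below) its average. Write $\lambda$ for the generator of the $\mathbb{C}^*$-action and $\beta=\sum_i\langle\lambda,\beta_i\rangle_{(\X,\A_T)}\beta_i\in\mathfrak{t}$ for its projection onto $\mathfrak{t}$, so that $H:=h_\lambda-h_\beta$ is the hamiltonian of the orthogonal generator $\lambda-\beta$ and, by Remark \ref{rem:positive_norm}, $\|(\X,\A_T)^\perp\|_1^w=\int_{\X_0}|H-\hat H|\,w(m_0)\Omega_0^n$, where $\hat H$ is the fibrewise weighted average of $H$ over $\X_0$. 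Using the twist-invariance of the $T$-orthogonal Chow weight (Lemma \ref{lem:twisted_Chow_weight}), I would reduce to the (possibly irrational) orthogonal twist, for which $\Ch_p^T(\X,\A_T)$ becomes the ordinary weighted Chow weight $D-\int_{C}\Omega$ with $D=(g(\A_T))(\xi)/(g'(\alpha_T))(\xi)$ and $C$ the orthogonal orbit closure of $p$.

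The crux is then a central-fibre identity, which up to sign reads
\[\Ch_p^T(\X,\A_T)=H(q)-\hat H,\qquad q:=\lim_{\tau\to0}(\lambda-\beta)(\tau)\cdot p\in\X_0.\]
To establish it I would compute the two ingredients by equivariant localisation, as in Proposition \ref{prop:localisation}: pushing forward to $\mathbb{C}$ and applying $D_\xi$ expresses both $(g(\A_T))(\xi)$ and the orbit integral in terms of the $T\times S^1$-fixed-point data of $\X_0$. On the one hand, the moment-map property of Lemma \ref{lem:hamiltonians} identifies $\int_C\Omega$, via the gradient-like flow along $C$, with the value of $H$ at the limit $q$; on the other hand, the fibre-integration (slope) formula of \cite[Lemma 9]{Lah19}, together with the independence statement of Lemma \ref{lem:independence}, identifies $D$ with the weighted average $\hat H$. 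Tracking the normalising constants and signs so that the two contributions assemble into a positive multiple of $H(q)-\hat H$ is one of the two main technical points; as a consistency check, for a product (hence trivial-norm) configuration $H$ is fibrewise constant and the formula correctly returns $\Ch_p^T=0$.

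Granting the identity, positivity follows from an elementary maximum-minus-average estimate, which is the \emph{uniform Chow stability} flagged in the introduction. Since $\X_0$ is compact, $H$ attains its extrema at $T\times S^1$-fixed points, and both $\max_{\X_0}H-\hat H$ and $\hat H-\min_{\X_0}H$ are bounded below by
\[\frac{1}{2V}\int_{\X_0}|H-\hat H|\,w(m_0)\Omega_0^n=\frac{1}{2V}\,\|(\X,\A_T)^\perp\|_1^w,\qquad V:=\int_{\X_0}w(m_0)\Omega_0^n,\]
using that the positive and negative parts of $H-\hat H$ have equal integral. By hypothesis $\|(\X,\A_T)^\perp\|_1^w>0$, so whichever sign occurs in the identity, $\sup_p\Ch_p^T(\X,\A_T)>0$. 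To turn the extremal fixed point $q^\ast$ into an honest $T$-fixed point $p\in X$ whose orbit limit $q$ equals $q^\ast$, I would restrict the flow to the $T$-fixed locus $\X^T$, which is $(\lambda-\beta)$-invariant because $T$ and $\mathbb{C}^*$ commute and which dominates $\mathbb{P}^1$ since $\X^T|_{\mathbb{P}^1\setminus\{0\}}=X^T\times(\mathbb{P}^1\setminus\{0\})$; the attracting cell of $q^\ast$ within $\X^T$ is then open and meets $\X_1^T=X^T$, producing the required $p$.

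The second, genuinely delicate point is that $\xi$ and the projection $\beta$ may be irrational, so that $\lambda-\beta$ need not generate a $\mathbb{C}^*$ and the orthogonal twist need not be an actual test configuration; the orbit $C$, its limit $q$, and the Bialynicki--Birula argument above are only literally available in the rational case. Here I would first prove the statement for rational approximants $\xi',\beta'$, for which the orthogonal generator is a genuine one-parameter subgroup, and then pass to the limit: because $\Ch_p^T$, the norm $\|(\X,\A_T)^\perp\|_1^w$, and the constant $1/2V$ depend continuously on the data while the lower bound above is uniform in the approximation, strict positivity is preserved in the limit. This approximation step, rather than any single computation, is where the argument must be handled with the most care, and it follows the strategy already used for the integral-versus-rational reduction in \cite{Sto09,SS11,Der18}.
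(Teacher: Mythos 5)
Your outer scaffolding (twist-invariance of $\Ch_p^T$ via Lemma \ref{lem:twisted_Chow_weight}, rational approximation of the orthogonal projection $\beta$, uniformity of the lower bound in the approximation) matches the paper, and your central-fibre identity $\Ch_p^T=\pm(H(q)-\hat H)$ is morally correct: it is the pointwise shadow of the Hisamoto-type slope formulae that the paper proves as Lemma \ref{lem:integral_limits}. The genuine gap is in how you produce the point $p$. You take the extremum $q^\ast$ of $H$ on $\X_0$ and claim that ``the attracting cell of $q^\ast$ within $\X^T$ is open and meets $\X_1^T$.'' This is false in general: for the downward flow $\tau\to 0$ there is exactly one open Bialynicki--Birula cell, namely the one attached to the component attracting \emph{generic} points, and the extremum of $H$ over $\X_0$ is typically not that component. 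The deformation to the normal cone of a point $x_0\in X$ already kills the argument: there the only point with positive Chow weight is $x_0$ itself, the relevant extremal fixed point $q^\ast$ is the ``tip'' of the exceptional $\mathbb{P}^n$ in $\X_0$, and its attracting cell is the single curve $\overline{\mathbb{C}^*x_0}$ --- a $1$-dimensional, non-open set meeting $X$ in exactly one point. Your openness argument would instead land on the generic cell, whose limit component gives Chow weight of the \emph{wrong} sign ($-\epsilon^{n+1}$-order, negative). Worse, some fixed components of $\X_0$ are not limits of any point of $\X_1$ at all (in the same example, $E\cap\widehat{X}$ is pointwise fixed but unreachable), so even existence of a $p$ flowing to $q^\ast$ needs an argument; and when $\X_0$ is singular or non-reduced the local weight analysis you would need to substitute is delicate. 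A second, related loose end: your integral estimate $\max_{\X_0}H-\hat H\geq\frac{1}{2V}\|(\X,\A_T)^\perp\|_1^w$ is over all of $\X_0$, but restricting the flow to $\X^T$ (your device for getting $T$-invariance of $p$) only sees $\X_0\cap\X^T$, and the estimate does not control the extremum there without a further argument.

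This is exactly the difficulty the paper's proof is engineered to avoid. Instead of locating an extremal fixed point and flowing back, the paper works directly on $X$ with the weak geodesic ray $\psi_t$: after normalising $(g(\A_T))(\xi)=0$, the limits $\lim_t\int_X|\dot\psi_t|\,w(\mu_t)\omega_t^n=\|(\X,\A_T)\|_1^w$ and $\lim_t\int_X\dot\psi_t\,w(\mu_t)\omega_t^n=0$ force $\lim_t\int_X\dot\psi_t^-\,w(\mu_t)\omega_t^n=-\tfrac12\|(\X,\A_T)\|_1^w$, so by pigeonhole some point has $\dot\psi_t(p)\leq-\frac{1}{3V}\|(\X,\A_T)\|_1^w$; convexity of $t\mapsto\dot\psi_t(p)$ plus a Dini-type argument make $p$ independent of $t$, and the one-dimensional case of Lemma \ref{lem:integral_limits} converts this into $\int_C\Omega\leq-\frac{1}{3V}\|(\X,\A_T)\|_1^w$, i.e.\ a uniform Chow bound. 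Reachability never enters, because the point is found on $X$ itself rather than on $\X_0$; and $T$-invariance is obtained not by restricting to $\X^T$ but by a separate intersection-theoretic argument of Dervan which the paper cites. So your proposal is not a complete proof as it stands: to close the reachability gap you would essentially have to re-import the measure-theoretic geodesic argument (or an equivalent), at which point you have reproduced the paper's proof.
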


Given all this, we can finally complete the proof of Theorem \ref{thm:main}:

\begin{proof}[Proof of Theorem \ref{thm:main}]
	Let \((X,\alpha_T)\) be a weighted extremal manifold. Then \((X,\alpha_T)\) is relatively weighted K-semistable by Theorem \ref{thm:weighted_semistable}. Suppose that \((\X,\A_T)\) is a test configuration for \((X,\alpha_T)\) which satisfies \[\DF_{v,w}^T(\X,\A_T)=0.\] We wish to show that this test configuration must be a product test configuration. If it is not a product test configuration, then by Remark \ref{rem:positive_norm}, we have \[\|(\X,\A_T)^\perp\|^w_1 > 0.\] By Proposition \ref{prop:Chow}, there exists a \(T\)-fixed point \(p\in X\) with \[\Ch_p^T(\X,\A_T) > 0.\] Define \(C:=\overline{\mathbb{C}^*p}\subset \X\). Let \(r:\mathcal{Y}\to\X\) be an equivariant resolution of singularities of \(\X\) with exceptional divisor \(\mathcal{F}_T\) supported on \(\mathcal{Y}_0\), such that \(\mathcal{Y}_0\) is a simple normal crossings divisor and the proper transform \(\hat{C}\subset\mathcal{Y}\) of \(C\) is smooth. Let \(b:\mathcal{B}\to\mathcal{Y}\) be the blowup of \(\mathcal{Y}\) along \(\hat{C}\), with exceptional divisor \(\E_T\). By Proposition \ref{prop:DFT_blowup}, \begin{align*}
		&\DF_{v,w}^T(\mathcal{B},b^*r^*\A_T-\epsilon\E_T-\epsilon^nb^*\mathcal{F}_T) \\
		=&\DF_{v,w}^T(\X,\A_T)- \frac{v(\mu(p))}{(n-2)!}\mathrm{Ch}_p^T(\X,\A_T)\epsilon^{n-1}+\O(\epsilon^\kappa),
	\end{align*} for some \(\kappa > n-1\). Since the weighted Chow weight of \(p\) is strictly positive, it follows that \[\DF_{v,w}^T(\mathcal{B},b^*r^*\A_T-\epsilon\E_T-\epsilon^nb^*\mathcal{F}_T)<0\] for \(\epsilon>0\) sufficiently small. Since \(T\) is maximal, the point \(p\) is relatively stable, so by \cite[Theorem 1.1]{Hal23}, \((\Bl_pX,\alpha_T-\epsilon E_T)\) admits a weighted extremal metric for all \(\epsilon>0\) sufficiently small. It is therefore relatively weighted K-semistable, and we have reached a contradiction.
\end{proof}
		
%%%%%%%%%%%%%%%%%%%%%%%%%%%%%%%%%%%%%%%%%%%%%%%%%%%%%%%%%%%%%%%%%%%%%%%%%%%%%%%%%%%%%%%%%%%%%%%%%%%%%%%%%%%%%%		
\subsection{Existence of a Chow stable point}

In this final section we prove Proposition \ref{prop:Chow}, which was used in the proof of Theorem \ref{thm:main}.

\begin{lemma} 
	For \(a\in\mathbb{R}\), \[\mathrm{Ch}_p(\X,\A_T+a\mathcal{O}_{\mathbb{P}^1}(1))=\mathrm{Ch}_p(\X,\A_T).\]
\end{lemma}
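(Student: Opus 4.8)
The plan is to represent the twist at the level of equivariant forms and then track how each of the two terms defining $\Ch_p$ responds to it. Fix an equivariant representative $\Omega + m_T$ for $\A_T$, and let $\omega_{\mathrm{FS}}$ be a form on $\mathbb{P}^1$ representing $c_1(\mathcal{O}_{\mathbb{P}^1}(1))$ normalised so that $\int_{\mathbb{P}^1}\omega_{\mathrm{FS}} = 1$. Since $T$ acts trivially on $\mathbb{P}^1$, the $T$-moment map of $a\,\pi^*\omega_{\mathrm{FS}}$ is constant, so we may take $\Omega + a\,\pi^*\omega_{\mathrm{FS}} + m_T$ as an equivariant representative for $\A_T + a\mathcal{O}_{\mathbb{P}^1}(1)$; in particular the $T$-moment map is unchanged and still satisfies the normalisation of Lemma \ref{lem:moment_map_normalisation}. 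The claim will follow once I show that both $\int_C\Omega$ and $\tfrac{(g(\A_T))(\xi)}{(g'(\alpha_T))(\xi)}$ increase by exactly $a$, so that the two increments cancel.

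For the first term, note that $C = \overline{\mathbb{C}^* p}$ projects to $\mathbb{P}^1$ with degree one: the $\mathbb{C}^*$-action covers scalar multiplication on $\mathbb{P}^1$, so $\pi$ restricts to a biholomorphism from the orbit $\mathbb{C}^* p$ onto $\mathbb{C}^*$. Hence $\int_C \pi^*\omega_{\mathrm{FS}} = \int_{\mathbb{P}^1}\omega_{\mathrm{FS}} = 1$, giving $\int_C(\Omega + a\,\pi^*\omega_{\mathrm{FS}}) = \int_C\Omega + a$.

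For the ratio, I will use the integral formula from the proof of Proposition \ref{prop:convergence}, namely $(g(\A_T))(\xi) = \tfrac{1}{(n+1)!}\int_\X g^{(n+1)}(\langle m_T,\xi\rangle)\,\Omega^{n+1} = \tfrac{1}{(n+1)!}\int_\X w(m_T)\,\Omega^{n+1}$, using $g^{(n+1)} = \widetilde w$ from Assumption \ref{ass:weights} (all integrals being taken over a resolution). Since $\mathbb{P}^1$ is a curve, $(\pi^*\omega_{\mathrm{FS}})^2 = 0$, so $(\Omega + a\,\pi^*\omega_{\mathrm{FS}})^{n+1} = \Omega^{n+1} + (n+1)a\,\pi^*\omega_{\mathrm{FS}}\wedge\Omega^n$, and therefore the numerator changes by $\tfrac{a}{n!}\int_\X w(m_T)\,\pi^*\omega_{\mathrm{FS}}\wedge\Omega^n$. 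Integrating over the fibres and applying Lemma \ref{lem:independence} (with $h = w$) shows the fibre integral $\int_{\X_\tau} w(m_\tau)\Omega_\tau^n = \int_X w(\mu)\omega^n$ is constant in $\tau$, so this increment equals $\tfrac{a}{n!}\int_X w(\mu)\omega^n$. Comparing with the base computation $(g'(\alpha_T))(\xi) = \tfrac{1}{n!}\int_X (g')^{(n)}(\langle\mu,\xi\rangle)\,\omega^n = \tfrac{1}{n!}\int_X w(\mu)\,\omega^n$, the numerator changes by exactly $a\,(g'(\alpha_T))(\xi)$, so the ratio increases by $a$. The two increments cancel, proving the lemma.

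The main obstacle is purely bookkeeping: matching the combinatorial constants so that the increments of the two terms agree, which hinges on recognising $(g'(\alpha_T))(\xi)$ as the same weighted volume $\tfrac{1}{n!}\int_X w(\mu)\omega^n$ that emerges after fibre integration. If $\Ch_p$ is read as the $T$-orthogonal weighted Chow weight $\Ch_p^T$, one checks additionally that the correction terms are unaffected: on $\X_0$, lying over the $S^1$-fixed point $0\in\mathbb{P}^1$, the form $\pi^*\omega_{\mathrm{FS}}$ restricts to zero and its $S^1$-moment map is constant, so each hamiltonian changes only by a constant that is absorbed into its fibrewise average, leaving the inner products $\langle\lambda,\beta_i\rangle_{(\X,\A_T)}$ and the values $h_{\beta_i}(p)-\hat{h}_{\beta_i}$ unchanged.
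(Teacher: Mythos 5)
Your proof is correct and takes essentially the same approach as the paper: both arguments show that $\int_C\Omega$ and the ratio $(g(\A_T))(\xi)/(g'(\alpha_T))(\xi)$ each increase by exactly $a$ --- the former because $C\to\mathbb{P}^1$ has degree one, the latter by expanding $(\Omega+a\pi^*\omega_{\mathrm{FS}})^{n+1}$ and identifying the resulting fibre integral $\frac{1}{n!}\int_X w(\mu)\omega^n$ with $(g'(\alpha_T))(\xi)$ via Lemma \ref{lem:independence} --- so the two increments cancel. Your closing paragraph on the $T$-orthogonal weighted Chow weight is a correct addition that goes slightly beyond what the paper's proof records.
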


\begin{proof}
	The restriction of \(\A_T+a\mathcal{O}_{\mathbb{P}^1}(1)\) to the general fibre is still \(\alpha_T\), and so the denominator \((g'(\alpha_T))(\xi)\) is unchanged by this perturbation. Since \(C\to\mathbb{P}^1\) is a bihomolorphism away from \(0\in\mathbb{P}^1\), \[\int_C(\Omega+a\pi^*\omega_{\mathrm{FS}}) = a + \int_C\Omega.\] On the other hand, \begin{align*}
		(g(\A_T+a\mathcal{O}_{\mathbb{P}^1}(1)))(\xi) &= \frac{1}{(n+1)!}\int_{\X}w(m_T)(\Omega+a\pi^*\omega_{\mathrm{FS}})^{n+1} \\
		&= \frac{1}{(n+1)!}\int_{\X}w(m_T)\Omega^{n+1}+\frac{a}{n!}\int_{\X}w(m_T)\Omega^n\wedge\pi^*\omega_{\mathrm{FS}} \\
		&= (g(\A_T))(\xi) + \frac{a}{n!} \int_{\X_1}w(m_1)\Omega_1^n \\
		&= (g(\A_T))(\xi) + a (g'(\alpha_T))(\xi). \qedhere
	\end{align*}
\end{proof}

It follows we may normalise \(\A_T\) so that \((g(\A_T))(\xi)=0\), in which case \(\Ch_p(\X,\A_T)=-\int_C\Omega\). Note that in the non-weighted setting, the usual Chow weight can also be normalised as such, although with a potentially different normalisation so we cannot say that the weighted and unweighted Chow weights coincide.

In order to prove the existence of a \(T\)-invariant point \(p\) with \(\Ch_p^T(\X,\A_T) > 0\), we will first prove a kind of uniform Chow stability, in terms of the weighted \(L^1\)-norm of the test configuration. To do this, we will describe the weighted \(L^1\)-norm in terms of the weak geodesic ray associated to the test configuration; see \cite{Che00, PS07} and \cite[Section 2.4]{Ber16} for generalities on geodesic rays. Such a description was first found in the unweighted projective setting by Hisamoto \cite[Theorem 1.2]{His16}.

\begin{lemma}\label{lem:integral_limits}
	Let \((\X,\A_T)\) be a test configuration for \((X,\alpha_T)\), and let \(\psi_t\) be the associated weak geodesic ray. Then \[\|(\X,\A_T)\|_1^w = \lim_{t\to\infty}\int_X|\dot{\psi}_t|w(\mu_t)\omega_t^n,\] where \(\omega_t := \omega + i\d\db\psi_t\) and \(\mu_t := \mu + d^c\psi_t\) is the associated moment map. Similarly \[(g(\A_T))(\xi) = \lim_{t\to\infty}\int_X\dot{\psi}_t w(\mu_t) \omega_t^n.\]
\end{lemma}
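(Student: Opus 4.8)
The plan is to follow the strategy of Hisamoto \cite{His16}, realising both integrals as moments of a single family of measures on $\mathbb{R}$ obtained by pushing forward the weighted volume $w(\mu_t)\omega_t^n$ along $\dot\psi_t$, and to show these converge to a weighted Duistermaat--Heckman measure supported on the central fibre $\X_0$. The signed limit is then the barycentre of this measure, which I will identify with $(g(\A_T))(\xi)$ using Lahdili's asymptotic slope formula, and the $L^1$-norm is its first absolute moment.

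First I would recall the two rays attached to $(\X,\A_T)$: the smooth ``orbit'' ray $\varphi_t$ with $\omega+i\d\db\varphi_t=\lambda(e^{-t})^*\Omega|_{\X_1}$ already used in Lemma \ref{lem:independence}, and the weak geodesic ray $\psi_t$, the $S^1$-invariant $C^{1,1}$ solution of the homogeneous complex Monge--Amp\`ere equation with the same asymptotic behaviour; these differ by a bounded function. The central step is to prove that the pushforward measures $(\dot\psi_t)_*\big(w(\mu_t)\omega_t^n\big)$ on $\mathbb{R}$ converge weakly, as $t\to\infty$, to the pushforward $(h_\lambda)_*\big(w(m_0)\Omega_0^n\big)$ of the weighted measure on $\X_0$ under the Hamiltonian $h_\lambda$ of Lemma \ref{lem:hamiltonians}. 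Granting this, I would evaluate the limit on the continuous linear-growth functions $x\mapsto x$ and $x\mapsto|x|$: the former gives $\lim_{t\to\infty}\int_X\dot\psi_t\,w(\mu_t)\omega_t^n=\int_{\X_0}h_\lambda\,w(m_0)\Omega_0^n$, and the latter gives $\lim_{t\to\infty}\int_X|\dot\psi_t|w(\mu_t)\omega_t^n=\int_{\X_0}|h_\lambda-\hat h_\lambda|w(m_0)\Omega_0^n=\|(\X,\A_T)\|_1^w$, once the ray is normalised so that its slope is centred with respect to $w(m_0)\Omega_0^n$.

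To identify the barycentre with $(g(\A_T))(\xi)$ I would invoke Lahdili's slope formula \cite[Lemma 9]{Lah19}, exactly as in the proof equating $\DF_{v,ww_{\ext}}$ with $\DF^T_{v,w}$: applied to $u=w$ along the smooth ray it yields $\lim_{t\to\infty}\int_X\dot\varphi_t\,w(\mu_t)\omega_t^n=\frac{1}{(n+1)!}\int_\X w(m_T)\Omega^{n+1}$, and the right-hand side is $(g(\A_T))(\xi)$ by Proposition \ref{prop:convergence}. Since $\dot\varphi_t$ and $\dot\psi_t$ induce the same weighted Duistermaat--Heckman measure (the two rays being asymptotically equivalent), their barycentres agree, so the weak-geodesic limit is also $(g(\A_T))(\xi)$, proving the second equality. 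Here Lemma \ref{lem:independence}, which fixes the fibrewise integrals and guarantees $\mu_t(X)=P$ for all $t$, is what makes the total-space integral well defined.

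I expect the main obstacle to be the weak convergence of measures for the \emph{weak} geodesic ray: unlike $\varphi_t$, the ray $\psi_t$ is only $C^{1,1}$ in space-time, so I must control $\dot\psi_t$ and the Monge--Amp\`ere measures $\omega_t^n$ uniformly up to $t=\infty$ and justify passing to the limit, adapting Hisamoto's argument to the non-projective K\"ahler setting. The presence of the weight is not a serious difficulty, since $w$ is smooth and strictly positive on the compact polytope $P$ and $\mu_t(X)=P$ for all $t$, so $w(\mu_t)\omega_t^n$ is uniformly comparable to $\omega_t^n$ and the unweighted convergence transfers directly; the only remaining care is the bookkeeping of the additive normalisation of $\psi_t$ relating the centred first absolute moment to the uncentred barycentre.
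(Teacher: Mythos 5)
Your skeleton is the right one --- the paper likewise realises both limits as integrals of the Hamiltonian against $w(m_0)\Omega_0^n$ over the central fibre, and your attention to the centring/normalisation bookkeeping is a genuine subtlety --- but the proposal leaves the actual crux unproven, and the method you point to for it would not work here. The step you defer (``adapting Hisamoto's argument to the non-projective K\"ahler setting'') is precisely the hard content of the lemma. The proof of \cite[Theorem 1.2]{His16} is quantised: it identifies the limiting distribution of $\dot\psi_t$ with the limit of the spectral measures of the $\mathbb{C}^*$-action on the spaces of sections $H^0(X,kL)$, using Bergman-kernel (Phong--Sturm) approximation of the geodesic ray. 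None of this has a counterpart for a transcendental class $\alpha$, where there is no polarisation and no space of sections; circumventing quantisation is exactly the issue, so this cannot be waved through as an ``adaptation''. Your fallback for the signed limit --- that $\varphi_t$ and $\psi_t$ differ by a bounded function and hence ``induce the same weighted Duistermaat--Heckman measure'' --- is also not a proof: boundedness of $\psi_t-\varphi_t$ controls the asymptotic slope of the weighted Monge--Amp\`ere energy $\mathcal{E}_w$ (via its Lipschitz property in the sup norm, together with convexity of $\mathcal{E}_w$ along weak geodesics, neither of which you establish), so it can plausibly rescue the barycentre identity, but it says nothing about the distribution of the values of $\dot\psi_t$, and in particular nothing about $\lim_{t\to\infty}\int_X|\dot\psi_t|\,w(\mu_t)\omega_t^n$. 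Thus the first equality of the lemma --- the one actually needed for Proposition \ref{prop:Chow} --- is left without proof.

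For comparison, the paper closes this gap with no measure-convergence machinery at all: the weak geodesic ray corresponds to the $S^1$-invariant solution $\Psi$ of the homogeneous complex Monge--Amp\`ere equation on the total space, so for every continuous $f$ one has $\int_X f(\dot\psi_t)\,w(\mu_t)\omega_t^n=\int_{\X_\tau}f(h_\Psi)\,w(m_{T,\Psi})\Omega_{\Psi,\tau}^n$, and letting $\tau\to0$ the problem becomes showing that the central-fibre integral $\int_{\X_0}f(h_\Psi)\,w(m_{T,\Psi})\Omega_{\Psi,0}^n$ equals $\int_{\X_0}f(h_\lambda)\,w(m_0)\Omega_0^n$, i.e.\ that this integral is an invariant of the equivariant class, independent of the (non-smooth) representative. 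That invariance is then proved by three approximations --- smoothing $\Omega_{\Psi,0}$, smoothing $f$, and passing to an equivariant resolution whose central fibre has simple normal crossings so that Lemma \ref{lem:independence} applies component by component --- following the scheme of \cite[Theorem 3.14]{Der18}. To complete your write-up you should either reproduce this representative-independence argument, or supply a genuinely transcendental proof of the weak convergence of $(\dot\psi_t)_*\bigl(w(\mu_t)\omega_t^n\bigr)$; citing \cite{His16} does not supply one.
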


\begin{proof}
	Note that for any continuous function \(f:\mathbb{R}\to\mathbb{R}\), \[\lim_{t\to\infty}\int_Xf(\dot{\psi}_t) w(\mu_t)\omega_t^n = \lim_{\tau\to0}\int_{\X_\tau}f(h_{\Psi}) w(m_{T,\Psi}) \Omega_{\Psi,\tau}^n = \int_{\X_0}f(h_{\Psi}) w(m_{T,\Psi}) \Omega_{\Psi,0}^n,\] where \(\Omega_\Psi = \Omega + i\d\db\Psi\) is the solution to the geodesic equation of the test configuration, \(h_\Psi := h_\lambda + d^c\Psi\) is the hamiltonian for the \(\mathbb{C}^*\)-action with respect to \(\Omega_\Psi\), and \(m_{T,\Psi}\) the \(T\)-moment map with respect to \(\Omega_\Psi\). We therefore wish to show that \[\int_{\X_0}f(h_{\Psi}) w(m_{T,\Psi}) \Omega_{\Psi,0}^n = \int_{\X_0}f(h_{\lambda}) w(m_0) \Omega_{0}^n,\] and this will prove both of the claims. The idea is these are both ``equivariant cohomological" integrals, only there are three obstructions: the function \(f\) may only be continuous rather than smooth, the variety \(\X_0\) may not be smooth, and the form \(\Omega_\Psi\) may also be not smooth. We can deal with these problems in a similar manner as the proof of \cite[Theorem 3.14]{Der18}.
	
	First, we assume \(\X_0\) and \(f\) are smooth. By Lemma \ref{lem:independence}, the integral is independent of the choice of smooth representative of the equivariant cohomology class \([\Omega_0+m_0]\). We may approximate \(\Omega_{\Psi,0}\) by a sequence of smooth K{\"a}hler metrics \(\Omega_{\Psi^\epsilon,0}\) with moment maps \(m_T^\epsilon\). Sending \(\epsilon\to0\), the integrals \(\int_{\X_0}f(h_{\Psi^\epsilon})w(m_{T,\Psi^\epsilon})\Omega_{\Psi^\epsilon, 0}^n\) are independent of \(\epsilon\) and converge to the limiting integral \(\int_{\X_0}f(h_{\Psi})w(m_{T,\Psi})\Omega_{\Psi, 0}^n\).
	
	Supposing now \(\X_0\) were smooth, we can deal with \(f\) not being differentiable by simply approximating it by smooth functions, then applying Lemma \ref{lem:independence} to the \(T\times S^1\)-action on \(\X_0\). Finally to deal with \(\X_0\) not being smooth, we can do exactly as in \cite[Theorem 3.14]{Der18}, namely we find a resolution of singularities \(\mathcal{Y}\to \X\) such the central fibre \(\mathcal{Y}_0\) is a simple normal crossings divisor, then compute the integrals on the resolution. Decomposing \(\mathcal{Y}_0\) into its smooth components and integrating over these, we see that the integrals are indeed equal.
\end{proof}
	
\begin{lemma}
	Let \((\X,\A_T)\) be a \(T\)-equivariant test configuration for \((X,\alpha_T)\) with \(\|(\X,\A_T)\|^w_1>0\). Normalise the test configuration so that \[(g(\A_T))(\xi) = 0.\] There exists a point \(p\in X\) (not necessarily \(T\)-invariant) such that \[\int_C\Omega \leq -\frac{1}{3V}\|(\X,\A)\|^w_1,\] where \(C\) is the orbit closure of \(p\) in \(\X\) under the \(\mathbb{C}^*\)-action, and \(V\) is the volume of \((X,\alpha_T)\) with respect to the measure \(w(\mu)\omega^n\).
\end{lemma}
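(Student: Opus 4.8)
The plan is to reduce the statement to a mass--concentration property of the weighted Duistermaat--Heckman measure of the test configuration. Write $\sigma:=\|(\X,\A)\|_1^w$ and recall that $V=\int_Xw(\mu)\omega^n=\int_{\X_0}w(m_0)\Omega_0^n$, the last equality by Lemma \ref{lem:independence}. After the normalisation $(g(\A_T))(\xi)=0$, the second identity of Lemma \ref{lem:integral_limits} gives $\int_{\X_0}h_\lambda\,w(m_0)\Omega_0^n=(g(\A_T))(\xi)=0$, so the fibrewise average $\hat h_\lambda$ of the hamiltonian $h_\lambda$ vanishes and $\sigma=\int_{\X_0}|h_\lambda|\,w(m_0)\Omega_0^n$. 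Since $h_\lambda$ has vanishing integral against $w(m_0)\Omega_0^n$, its positive and negative parts each integrate to $\sigma/2$.

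First I would express the (normalised) orbit integral in terms of the limiting value of $h_\lambda$. For a point $p$ whose orbit closure $C$ degenerates to $p_0:=\lim_{\tau\to0}\lambda(\tau)p\in\X_0$, the moment-map geometry of the $S^1$-invariant rational curve $C$, together with the product structure over $\infty$ and Lemma \ref{lem:independence}, should identify $-\int_C\Omega$ with the centred value $h_\lambda(p_0)-\hat h_\lambda$; equivalently, $-\int_C\Omega$ is the asymptotic velocity $\lim_{t\to\infty}\dot\psi_t(p)$ of the weak geodesic ray at $p$. This is consistent with the product case, where for a fixed point $p$ one has $p_0=p$ and recovers the weighted Chow weight $h_\beta(p)-\hat h_\beta$. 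The role of Lemma \ref{lem:integral_limits} is then that the distribution of these velocities --- the weighted Duistermaat--Heckman measure $DH:=(h_\lambda)_*\big(w(m_0)\Omega_0^n\big)$ --- is the weak limit of the pushforwards $(\dot\psi_t)_*\big(w(\mu_t)\omega_t^n\big)$, and has mean $0$, total mass $V$, and first absolute moment $\sigma$. Crucially this is \emph{not} the pushforward of the fixed measure under $p\mapsto h_\lambda(p_0)$, which would concentrate at the minimum; the spreading of $DH$ is exactly what the time-varying measure provides.

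With this in hand the conclusion is a one-line estimate. If $h_\lambda\le \tfrac{\sigma}{3V}$ held $DH$-almost everywhere, then its positive part would integrate to at most $\tfrac{\sigma}{3V}\cdot V=\tfrac{\sigma}{3}<\tfrac{\sigma}{2}$, contradicting $\int h_\lambda^+\,w(m_0)\Omega_0^n=\sigma/2$. Hence $DH\big((\tfrac{\sigma}{3V},\infty)\big)>0$, and by the convergence $(\dot\psi_t)_*(w(\mu_t)\omega_t^n)\to DH$ the set $\{\dot\psi_t>\tfrac{\sigma}{3V}\}$ has positive measure for all large $t$; a limiting argument then produces a genuine point $p\in X$ whose orbit closure satisfies $-\int_C\Omega\ge\tfrac{\sigma}{3V}$, that is $\int_C\Omega\le-\tfrac{1}{3V}\|(\X,\A)\|_1^w$. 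The factor $\tfrac13$, rather than the sharper $\tfrac12$ coming from the moment bound $\min_{\X_0}h_\lambda\le-\tfrac{\sigma}{2V}$, is the slack absorbing this approximation. I expect the main obstacle to be precisely this last point: rigorously relating the orbit integral $\int_C\Omega$ at an individual point to the limiting hamiltonian value, and upgrading the positive-measure statement to an actual point $p$, since the weak geodesic is only a degenerate solution and the central fibre $\X_0$ may be singular and non-reduced. As in the proof of Lemma \ref{lem:integral_limits} I would handle both by passing to a $T^{\mathbb{C}}\times\mathbb{C}^*$-equivariant resolution with simple normal crossings central fibre and arguing component by component.
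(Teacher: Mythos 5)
Your opening reductions are sound and coincide with the paper's: by Lemma \ref{lem:integral_limits} and the normalisation \((g(\A_T))(\xi)=0\), the velocity \(\dot{\psi}_t\) has asymptotic mean zero, its positive and negative parts each carry mass \(\sigma/2\), and the counting argument with the \(1/3\) slack is exactly the paper's. Your first identification, \(-\int_C\Omega=h_\lambda(p_0)-\hat{h}_\lambda\) after normalisation, is also correct: it follows from the moment-map picture on the invariant curve \(C\) together with the fact that \(c_\infty-\hat{h}_\lambda\) is a positive multiple of \((g(\A_T))(\xi)\), where \(c_\infty\) is the constant value of \(h_\lambda\) on \(\X_\infty\). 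The fatal problem is your ``equivalent'' reformulation \(-\int_C\Omega=\lim_{t\to\infty}\dot{\psi}_t(p)\): it has the wrong sign, and it actually contradicts your own first identification. With the paper's conventions, \(\lambda(e^{-t})\) is the \emph{downward} gradient flow of \(h_\lambda\), so a generic \(p\) limits to a point \(p_0\) where \(h_\lambda\) is minimal and yet carries the \emph{largest} asymptotic velocity; the correct relation, which is what the paper uses via the one-dimensional case of \cite[Theorem 4.14]{DR17}, is \(\lim_{t\to\infty}\dot{\psi}_t(p)=+\int_C\Omega=-(h_\lambda(p_0)-\hat{h}_\lambda)\). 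Test this on the product configuration generated by \(\beta\) acting on \(X=\mathbb{P}^1\) (normalised): the point the lemma must find is the fixed point where \(h_\beta\) is maximal, with \(\int_C\Omega=\hat{h}_\beta-\max h_\beta<0\) and eventually the most \emph{negative} velocity, whereas generic points have eventually positive velocity but \(\int_C\Omega=\hat{h}_\beta-\min h_\beta>0\). Consequently your final hunt inside \(\{\dot{\psi}_t>\tfrac{\sigma}{3V}\}\) lands on exactly the wrong points: they satisfy \(\int_C\Omega\geq\tfrac{\sigma}{3V}\), not \(\int_C\Omega\leq-\tfrac{\sigma}{3V}\). The search must be run on the negative part of \(\dot{\psi}_t\), as the paper does.

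Even after flipping the sign, the step you label ``a limiting argument then produces a genuine point'' is the substantive missing idea, and it is not a technicality about singular or non-reduced central fibres. At each large \(t\) the set \(\{\dot{\psi}_t\leq-\tfrac{\sigma}{3V}\}\) is non-empty, but a priori these sets move with \(t\), so nothing said so far yields one point lying in all of them. The paper's resolution is convexity of the weak geodesic: \(t\mapsto\psi_t(p)\) is convex, hence \(\dot{\psi}_t(p)\) is non-decreasing in \(t\); the sets \(\{\dot{\psi}_t\leq-\tfrac{\sigma}{3V}\}\) are then nested, and a Dini-type (equivalently, compactness) argument produces a single \(p\) with \(\dot{\psi}_t(p)\leq-\tfrac{\sigma}{3V}\) for all \(t\), to which the \cite{DR17} identification is applied. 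Your proposal never invokes this monotonicity, and without it the argument does not close. Finally, the alternative route suggested by your (correct) central-fibre identification is also incomplete as stated: every orbit limit \(p_0\) lies in the fixed locus of the \(\mathbb{C}^*\)-action on \(\X_0\), so the Duistermaat--Heckman measure, which is spread over the whole of \(\X_0\), cannot by itself produce an orbit limit with large hamiltonian value --- in the product example above the orbit limits are just two points while the measure is uniform on an interval. Bridging that gap would require a Bialynicki--Birula type argument inside \(\X_0\) (flow up to a fixed point and show it is attainable from outside the central fibre), or simply the geodesic-ray argument of the paper.
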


\begin{proof}
	Denote by \(\psi_t\) the geodesic associated to the test configuration. Write \(\omega_t := \omega + i\d\db\psi_t\), which has associated moment map \(\mu_t := \mu + d^c \psi_t\). From Lemma \ref{lem:integral_limits}, \[\lim_{t\to\infty}\int_X |\dot{\psi}_t| w(\mu_t) \omega_t^n = \|(\X,\A)\|_1^w,\] and \[\lim_{t\to\infty} \int_X \dot{\psi}_t w(\mu_t) \omega_t^n = 0,\] where this last equality follows from the normalisation \((g(\A_T))(\xi) = 0\). Let \(\dot{\psi}_t = \dot{\psi}_t^+ + \dot{\psi}_t^-\) be the decomposition of \(\dot{\psi}_t\) into its positive and negative parts. Then \[\int_X |\dot{\psi}_t| w(\mu_t) \omega_t^n = \int_X \dot{\psi}_t^+ w(\mu_t) \omega_t^n - \int_X \dot{\psi}_t^- w(\mu_t) \omega_t^n\] and \[\int_X \dot{\psi}_t w(\mu_t) \omega_t^n = \int_X \dot{\psi}_t^+ w(\mu_t) \omega_t^n + \int_X \dot{\psi}_t^- w(\mu_t) \omega_t^n.\] Subtracting the first equation from the second and sending \(t\to\infty\), it follows that \begin{equation}\label{eq2}
		\lim_{t\to\infty}\int_X \dot{\psi}_t^- w(\mu_t) \omega_t^n = -\frac{1}{2}\|(\X,\A)\|_1^w.
	\end{equation} 
	
	Now, note that \(w(\mu_t) \omega_t^n\) is a non-negative Borel measure with fixed volume \(V := \int_X w(\mu) \omega^n\). If \(\dot{\psi}_t^-\) everywhere satisfied \[\dot{\psi}_t^- \geq -\frac{1}{3V}\|(\X,\A)\|_1^w,\] then we would have \[\int_X \dot{\psi}_t^- w(\mu_t) \omega_t^n \geq -\frac{1}{3}\|(\X,\A)\|_1^w,\] contradicting the above equality \eqref{eq2}. It follows that for each \(t\) there exists a point \(p_t\) such that \[\dot{\psi}_t(p_t) \leq -\frac{1}{3V}\|(\X,\A)\|_1^w.\] We wish to choose \(p_t\) independent of \(t\). To do this, we use the property of convexity along geodesics, which implies that \(\dot{\psi}_t(p)\) is non-decreasing in \(t\), for each fixed \(p\). Suppose that for every \(p\) there existed some \(T_p\) such that \[\dot{\psi}_t(p) \geq C := -\frac{1}{3V}\|(\X,\A)\|_1^w\] for all \(t \geq T_p\). Then the functions \(\varphi_t := \min(\dot{\psi}_t,C)\) are continuous and increase pointwise to the constant function \(C\), hence \(\varphi_t \to C\) uniformly. Thus for any \(\epsilon > 0\) there exists \(T\) such that \(\dot{\psi}_t(p) \geq C - \epsilon\) for all \(p\in X\) and \(t \geq T\), again contradiction equation \eqref{eq2}.
	
	It follows there exists \(p\in X\) such that \[\lim_{t\to\infty}\dot{\psi}_t^-(p) \leq - \frac{1}{3V}\|(\X,\A)\|_1^w,\] but the left hand side equals \(\int_C\Omega\) by the one-dimensional case of \cite[Theorem 4.14]{DR17} (which is also the one-dimensional and unweighted case of Lemma \ref{lem:integral_limits}) and we are done.
\end{proof}

The point \(p\) we constructed in the previous lemma may not be torus invariant, however a simple intersection theoretic argument by Dervan implies that there exists such a point which is \(T\)-invariant; we refer to  \cite[Proposition 4.14]{Der18} for the proof.\footnote{The proof of Proposition 4.14 has an error, although the alternative argument given after the proof is correct.}

\begin{lemma}
	Let \((\X,\A_T)\) be a \(T\)-equivariant test configuration for \((X,\alpha_T)\) with \(\|(\X,\A_T)\|_1^w > 0\), normalised so that \((g(\A_T))(\xi) = 0\). Then there exists a \(T\)-invariant point \(p\in X\) such that \[\int_C \Omega \leq -\frac{1}{3V}\|(\X,\A)\|_1^w,\] where \(V\) is the volume of \((X,\alpha)\).
\end{lemma}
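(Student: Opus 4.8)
The plan is to upgrade the (possibly non-invariant) point produced by the previous lemma to a $T$-invariant one by degenerating it to a $T$-fixed point along a generic one-parameter subgroup of $T$, and then controlling the resulting orbit closure by a homological positivity argument. This is the ``alternative argument'' alluded to in the footnote to \cite[Proposition 4.14]{Der18}.

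Let $q\in X$ be the point supplied by the previous lemma, so that $\int_{C_q}\Omega\le-\tfrac{1}{3V}\|(\X,\A)\|_1^w$, where $C_q:=\overline{\mathbb{C}^*q}\subset\X$. I would first fix a generic integral one-parameter subgroup $\sigma:\mathbb{C}^*\to T^{\mathbb{C}}$, generic in the sense that its fixed locus in $X$ is exactly $X^T$. As $X$ is compact the limit $p:=\lim_{s\to0}\sigma(s)\cdot q$ exists and lies in $X^{\sigma(\mathbb{C}^*)}=X^T$, so $p$ is $T$-invariant; setting $C:=\overline{\mathbb{C}^*p}$, it remains to prove $\int_{C}\Omega\le\int_{C_q}\Omega$.

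The key observation is that the test-configuration $\mathbb{C}^*$-action commutes with the $T^{\mathbb{C}}$-action, so $C_{\sigma(s)q}=\sigma(s)\cdot C_q$ for each $s\in\mathbb{C}^*$. Since $\Omega$ is closed, $\int_{C_{\sigma(s)q}}\Omega$ equals the cohomological pairing $\langle\A,[C_{\sigma(s)q}]\rangle$, which depends only on the homology class $[C_{\sigma(s)q}]\in H_2(\X;\mathbb{R})$; this class is locally constant in the algebraic family, hence constant on the connected group $\mathbb{C}^*$, so $\int_{C_{\sigma(s)q}}\Omega=\int_{C_q}\Omega$ for all $s\neq0$. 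I would then pass to the cycle-theoretic limit $Z:=\lim_{s\to0}C_{\sigma(s)q}$, which preserves the homology class, so $\langle\A,[Z]\rangle=\int_{C_q}\Omega$. Because $Z$ is an effective $\mathbb{C}^*$-invariant $1$-cycle passing through $p=\lim_{s\to0}\sigma(s)q$, it contains the orbit closure $C=\overline{\mathbb{C}^*p}$; writing $Z=C+R$ with $R$ effective and using positivity of the K\"ahler class $\A$ on effective cycles, we obtain
\[
\int_{C}\Omega=\langle\A,[C]\rangle=\int_{C_q}\Omega-\langle\A,[R]\rangle\le\int_{C_q}\Omega\le-\frac{1}{3V}\|(\X,\A)\|_1^w,
\]
as desired.

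The main obstacle is to make the degeneration rigorous on the possibly singular space $\X$: one must ensure the family $\{C_{\sigma(s)q}\}$ admits a well-defined flat limit $Z$ with $[Z]=[C_q]$, that $C$ appears in $Z$ \emph{with multiplicity one} (so that the residual cycle $R$ is genuinely effective rather than merely the difference of two effective cycles), and that $\A$ pairs non-negatively with $R$ in the Bott--Chern formalism for K\"ahler complex spaces. These points are handled by passing to an equivariant resolution of singularities on which the total family of orbit closures is well-behaved and working with proper transforms; the bookkeeping of the residual toric components of the limit is the only genuinely delicate step, and is carried out in \cite[Proposition 4.14]{Der18}.
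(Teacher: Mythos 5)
Your overall strategy --- degenerate the point $q$ supplied by the previous lemma to a $T$-fixed point $p$ along a generic one-parameter subgroup of $T$, take the cycle-theoretic limit of the orbit closures $\sigma(s)\cdot C_q$, and compare $\int_C\Omega$ with $\int_{C_q}\Omega$ through an effective decomposition of the limit cycle --- is the right one, and is in the spirit of the argument of \cite{Der18} that the paper itself simply cites (the paper gives no self-contained proof of this lemma). However, your key positivity step is inconsistent with the hypotheses of the lemma. The lemma is stated for the \emph{normalised} class, $(g(\A_T))(\xi)=0$, and this class cannot be K\"ahler: if $\A$ were K\"ahler, then $\int_{C_q}\Omega$ would be the area of a compact curve, hence positive, contradicting $\int_{C_q}\Omega\le-\frac{1}{3V}\|(\X,\A)\|_1^w<0$. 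The normalisation replaces the original K\"ahler class $\A'$ by $\A=\A'+a[\pi^*\omega_{\mathrm{FS}}]$ with $a<0$, so the inequality $\langle\A,[R]\rangle\ge0$, which you justify by ``positivity of the K\"ahler class $\A$ on effective cycles,'' is not available; it is false for general effective cycles (it fails for $C_q$ itself).

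The repair requires the specific structure of the residual cycle $R$, not just effectivity. Since each $\sigma(s)\cdot C_q$ is invariant under the test-configuration $\mathbb{C}^*$-action (the two actions commute), the limit cycle $Z$ is $\mathbb{C}^*$-invariant; a component of $Z$ that is neither horizontal nor contained in $\X_0\cup\X_\infty$ would have infinitely many distinct $\mathbb{C}^*$-translates, which is impossible. Hence $Z=C'+R$ with $C'$ horizontal and $R$ an effective \emph{vertical} cycle supported in $\X_0\cup\X_\infty$. Moreover $\pi_*[Z]=\pi_*[C_q]=[\mathbb{P}^1]$ forces the horizontal part to consist of a single component of multiplicity one and degree one over $\mathbb{P}^1$; since $p\in|Z|\cap\X_1$ and $R$ lies over $\{0,\infty\}$, that component must be $C$. (Multiplicity one is not cosmetic here: with the normalised class one expects $\int_C\Omega<0$, and a decomposition $Z=aC+R$ with $a\ge2$ would destroy the inequality you want, so your deferral of this point genuinely matters; the degree count over $\mathbb{P}^1$ settles it.) Now write $\Omega=\Omega'+a\pi^*\omega_{\mathrm{FS}}$ with $\Omega'$ K\"ahler. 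Because $R$ is vertical, $\int_R\pi^*\omega_{\mathrm{FS}}=0$, hence $\langle\A,[R]\rangle=\int_R\Omega'\ge0$, and only then does your final display
\[
\int_C\Omega=\int_{C_q}\Omega-\langle\A,[R]\rangle\le\int_{C_q}\Omega\le-\frac{1}{3V}\|(\X,\A)\|_1^w
\]
go through. Equivalently, one can run the entire degeneration argument with the unnormalised K\"ahler form $\Omega'$ and note that the normalisation shifts $\int_C\Omega'$ and $\int_{C_q}\Omega'$ by the same constant $a$, both curves having degree one over $\mathbb{P}^1$.
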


We recall that the \(T\)-orthogonal weighted Chow weight of a \(T\)-invariant point \(p\) is \[\mathrm{Ch}_p^T(\X,\A_T):=\Ch_p^w(\X,\A)+\sum_{i=1}^r\langle\lambda,\beta_i\rangle_{(\X,\A_T)}(h_{\beta_i}(p)-\hat{h}_{\beta_i}).\] Here \(\lambda\) is the \(\mathbb{C}^*\)-action of the test configuration, \(\beta_i\) is an orthonormal basis for the Lie algebra \(\mathfrak{t}\) of \(T\), and \(h_{\beta_i}\) is a hamiltonian function on \(X\) for \(\beta_i\) with average \(\hat{h}_{\beta_i}\).

\begin{lemma}\label{lem:twisted_Chow_weight}
	Let \((\X,\A_T)\) be a test configuration for \((X,\alpha_T)\), and let \(\beta:\mathbb{C}^*\to T^{\mathbb{C}}\) be a one-parameter subgroup. Denote by \((\X,\A_T,\beta)\) the twist of the original test configuration by \(\beta\). Then given a \(T\)-invariant point \(p\in X\), the weighted Chow weight of \(p\) changes by \[\Ch_p^w(\X,\A_T,\beta) = \Ch_p^w(\X,\A_T) - h_\beta(p) + \hat{h}_\beta.\]
\end{lemma}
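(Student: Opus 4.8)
The plan is to compare, term by term, the three ingredients of
\[\Ch_p^w(\X,\A_T)=\frac{(g(\A_T))(\xi)}{(g'(\alpha_T))(\xi)}-\int_C\Omega\]
for the original test configuration and for its twist $(\X',\A_T'):=(\X,\A_T,\beta)$, writing $C'$ for the orbit closure of $p$ under the twisted $\mathbb{C}^*$-action. Two of the three terms are easy. The denominator $(g'(\alpha_T))(\xi)$ depends only on the general fibre $(X,\alpha_T)$, which is untouched by twisting, so it is unchanged. For the numerator, the computation carried out inside the proof of Lemma \ref{lem:integral_twist} shows
\[(g(\A_T'))(\xi)=(g(\A_T))(\xi)+\tfrac{1}{n!}\int_X h_\beta\,w(\mu)\,\omega^n;\]
since $(g'(\alpha_T))(\xi)=\tfrac{1}{n!}\int_X w(\mu)\,\omega^n$ and $\hat h_\beta$ is by definition the average of $h_\beta$ against $w(\mu)\omega^n$, the added term is exactly $\hat h_\beta\,(g'(\alpha_T))(\xi)$. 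Hence the first summand of the weighted Chow weight increases by precisely $\hat h_\beta$.

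The substance of the lemma is therefore the claim $\int_{C'}\Omega=\int_C\Omega+h_\beta(p)$. The first point to make is that, as a subvariety of $\X|_{\mathbb{C}}=\X'|_{\mathbb{C}}$, the orbit is genuinely unchanged: since $p$ is $T$-fixed it is fixed by $T^{\mathbb{C}}$, so $\beta(s)p=p$ and the twisted action satisfies $\lambda'(s)p=\lambda(s)\beta(s)p=\lambda(s)p$. Thus $C$ is a connected rational curve, pointwise fixed by $T$, with $\mathbb{C}^*$-fixed poles $p_0\in\X_0$ (over $0$) and $p_\infty\in\X_\infty$ (over $\infty$, in the trivially compactified region); in particular $m_T$ is constant along $C$, equal to $\mu(p)$. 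I would then compute $\int_C\Omega$ as the difference of the $S^1$-moment map between its two poles, i.e.\ the one-dimensional symplectic-area formula, equivalently the one-dimensional unweighted case of Lemma \ref{lem:integral_limits}, under which $\int_C\Omega=\lim_{t\to\infty}\dot{\psi}_t(p)$.

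The mechanism producing the extra $h_\beta(p)$ is the asymmetry between the two poles. Over the central chart $\X'_{\mathbb{C}}=\X_{\mathbb{C}}$, Lemma \ref{lem:integral_twist} gives the twisted $S^1$-moment map as $m_{S^1}'=m_{S^1}+\langle m_T,\beta\rangle$, so at the central pole $m_{S^1}'(p_0)=m_{S^1}(p_0)+\langle\mu(p),\beta\rangle=m_{S^1}(p_0)+h_\beta(p)$. Over the end $\mathbb{P}^1\setminus\{0\}$, however, the trivial compactification forces a re-trivialisation (by a $\beta$-dependent clutching function) after which the twisted $\mathbb{C}^*$ acts trivially on the $X$-factor, so the moment map is unchanged at $p_\infty$. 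Subtracting, the pole difference changes by $h_\beta(p)$, giving $\int_{C'}\Omega=\int_C\Omega+h_\beta(p)$; equivalently, in the geodesic picture the limiting velocity $\lim_{t\to\infty}\dot{\psi}_t(p)$ gains exactly the constant $h_\beta(p)$ because the test-circle Hamiltonian gains $\langle m_T,\beta\rangle$ at the $\beta$-fixed point $p$. Combining the three computations yields
\[\Ch_p^w(\X,\A_T,\beta)=\Big(\tfrac{(g(\A_T))(\xi)}{(g'(\alpha_T))(\xi)}+\hat h_\beta\Big)-\big(\textstyle\int_C\Omega+h_\beta(p)\big)=\Ch_p^w(\X,\A_T)-h_\beta(p)+\hat h_\beta.\]

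The step I expect to be delicate is exactly the evaluation $\int_{C'}\Omega-\int_C\Omega=h_\beta(p)$. The signs must be tracked carefully, so that the $+h_\beta(p)$ emerges with the sign that, combined with the $+\hat h_\beta$ from the numerator and the overall minus sign in front of $\int_C\Omega$, reproduces $-h_\beta(p)+\hat h_\beta$. The conceptual crux is that the twist really does act by $\beta$ near the central fibre but is forced to act trivially on $X$ near $\infty$, so the naive expectation ``the orbit and the form are the same, hence $\int_C\Omega$ is unchanged'' fails by precisely the fixed-point weight $h_\beta(p)$. To make the pole formula rigorous when $\X$ or $C$ is singular, I would pass to a resolution on which the proper transform of $C$ is smooth, as in Proposition \ref{prop:blowup_general}, noting that $\int_C\Omega$ only involves the one-dimensional orbit and so may be computed on the smooth locus.
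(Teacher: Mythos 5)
Your handling of the first term is exactly the paper's: both invoke the computation from Lemma \ref{lem:integral_twist} to get \((g(\A_T'))(\xi)=(g(\A_T))(\xi)+\tfrac{1}{n!}\int_X h_\beta w(\mu)\omega^n\), and divide by \((g'(\alpha_T))(\xi)=\tfrac{1}{n!}\int_X w(\mu)\omega^n\) to obtain the shift by \(\hat h_\beta\). The divergence is in the curve term \(\int_{C'}\Omega'=\int_C\Omega+h_\beta(p)\), which you correctly identify as the crux, but which your argument does not actually prove. Your pole-difference formula requires comparing \(S^1\)-hamiltonians on two \emph{different} compact spaces, \((\X,\Omega,m_{S^1})\) and \((\X',\Omega',m'_{S^1})\); each hamiltonian is defined only up to an additive constant, so your two assertions --- that \(m'_{S^1}\) gains exactly \(h_\beta(p)\) (with no extra constant) at the central pole, and that it is ``unchanged'' at the pole at infinity --- are not independently meaningful statements. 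Only their combination is well-posed, and that combination is precisely the identity you are trying to prove. The fact you cite at infinity (that after the \(\beta\)-dependent re-trivialisation the twisted \(\mathbb{C}^*\) acts trivially on the \(X\)-factor) shows only that \(m'_{S^1}\) is \emph{constant} on \(\X'_\infty\); it says nothing about the value of that constant relative to a normalisation fixed over \(\mathbb{C}\), and the two normalisations cannot be matched naively because \(\X\) and \(\X'\) are glued to their trivial parts at infinity by different clutching maps --- this differing gluing is exactly where the discrepancy \(h_\beta(p)\) is created. Concretely, if you normalise \(\Omega'=\Omega\) over \(\pi^{-1}(\overline{D})\) and \(m'_{S^1}=m_{S^1}+\langle m_T,\beta\rangle\) there, then establishing \(m'_{S^1}(p'_\infty)=m_{S^1}(p_\infty)\) requires a Stokes-type comparison of the two caps of \(C\) and \(C'\) over \(\mathbb{P}^1\setminus D\), lying in differently glued compactifications; this computation is the real content and is missing.

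The paper avoids working at infinity altogether: it applies the ``exact same formula'' of Lemma \ref{lem:integral_twist} --- itself proven via the localisation formula of Proposition \ref{prop:localisation}, which converts the invariant of \emph{any} compactified test configuration into a computation purely over \(\mathbb{C}\) --- to the one-dimensional test configuration \(C\subset\X\) for the point \(p\) and its twist \(C'\subset\X'\), with the weight-free choice of \(g\). The compactification at infinity is thereby packaged once and for all into machinery already verified, instead of being re-derived by hand. Your alternative via the geodesic ray, \(\int_C\Omega=\lim_{t\to\infty}\dot\psi_t(p)\) together with \(\dot\psi'_t(p)=\dot\psi_t(p)+h_\beta(p)\), would also close the gap, but the second identity (how the ray of a twisted test configuration relates to the original ray at a \(\beta\)-fixed point) is again asserted rather than proven, and carries the same hidden normalisation issue. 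In short: the mechanism and the final formula are right, and the first half coincides with the paper, but the key identity for the curve term is not established; the cleanest repair is simply to quote Lemma \ref{lem:integral_twist} in dimension one, as the paper does.
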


\begin{proof}
This is an easy consequence of a localisation formula already computed in Lemma \ref{lem:integral_twist}. In particular, we observed in that proof that for \(\A_T'\) the K{\"a}hler class of the twisted test configuration, that \[(g(\A_T'))(\xi) = (g(\A_T))(\xi) + \frac{1}{n!}\int_X h_\beta w(\mu)\omega^n.\] Hence \[\frac{(g(\A_T'))(\xi)}{(g'(\alpha_T))(\xi)} = \frac{(g(\A_T))(\xi)}{(g'(\alpha_T))(\xi)} + \hat{h}_\beta.\] Using this exact same formula in the one-dimensional case, we observe that \[\int_{C'}\Omega' = \int_C\Omega + h_\beta(p). \qedhere\]
\end{proof}

\begin{remark}
	It follows easily from this result that \(\Ch^T_p(\X,\A_T)\) is twist invariant.
\end{remark}

Using all of this, we can now prove the existence of a \(T\)-invariant point \(p\) with \(\Ch_p^T(\X,\A_T) > 0\) whenever \(\|(\X,\A_T)^\perp\|_1^w > 0\).

\begin{proof}[Proof of Proposition \ref{prop:Chow}]
	Let \[\beta := \sum_{i=1}^r \langle\lambda, \beta_i\rangle_{(\X,\A_T)} \beta_i\] be the orthogonal projection of \(\lambda\) to \(\mathfrak{t}\). Choose a sequence of rational points \(\beta_n\) in the Lie algebra tending towards \(\beta\). For each \(n\) we twist the test configuration by \(-\beta_n\), and there exists a \(T\)-invariant point \(p_n\) such that \[\Ch_{p_n}^w(\X,\A_T,-\beta_n) \leq -\frac{1}{3V}\|(\X,\A_T,-\beta_n)\|_1^w.\] The right hand side tends towards \[-\frac{1}{3V}\|(\X,\A_T)^\perp\|_1^w\] as \(n\to\infty\). On the other hand, by Lemma \ref{lem:twisted_Chow_weight} the left hand side satisfies \begin{align*}
	\Ch_{p_n}^w(\X,\A_T,-\beta_n) - \Ch_{p_n}^T(\X,\A_T) &= h_{\beta_n}(p) - \hat{h}_{\beta_n} - \sum_{i=1}^r\langle\lambda,\beta_i\rangle_{(\X,\A_T)}( h_{\beta_i}(p) - \hat{h}_{\beta_i}) \\
	&= h_{\beta_n}(p) - \hat{h}_{\beta_n} - (h_{\beta}(p) - \hat{h}_{\beta}) \\
	&\to 0
	\end{align*} as \(n\to\infty\) since \(\beta_n \to \beta\) as \(n\to\infty\). It follows that for \(n\) sufficiently large, \[\Ch_{p_n}^T(\X,\A_T)\leq -\frac{1}{6V}\|(\X,\A_T)^\perp\|_1^w,\] and we are done.
\end{proof}
	
\bibliographystyle{alpha}
\bibliography{references}
	
\end{document}